\numberwithin{equation}{section}
\theoremstyle{plain}
\newtheorem{theorem}{Theorem}[section]
\newtheorem{lemma}{Lemma}[section]
\theoremstyle{remark}
\newtheorem{remark}{Remark}[section]
\definecolor{grau}{rgb}{0.8,0.8,0.8}
\newcommand{\fzcolor}{\color{orange}}
\theoremstyle{definition}
\DeclareMathOperator*{\var}{{\mathrm{var}}}
\newcommand{\transpose}{^{\mathrm{T}}}
\newcommand{\leps}{{\underline{\epsilon}}}
\newcommand{\calA}{{\mathcal{A}}}
\newcommand{\calD}{{\mathcal{D}}}
\newcommand{\calF}{{\mathcal{F}}}
\newcommand{\calG}{{\mathcal{G}}}
\newcommand{\calH}{{\mathcal{H}}}
\newcommand{\calN}{{\mathcal{N}}}
\newcommand{\calP}{{\mathcal{P}}}
\newcommand{\calS}{{\mathcal{S}}}
\newcommand{\calX}{{\mathcal{X}}}
\newcommand{\calY}{{\mathcal{Y}}}
\newcommand{\bx}{{\mathbf{x}}}
\newcommand{\bw}{{\mathbf{w}}}
\newcommand{\bz}{{\mathbf{z}}}
\newcommand{\bbeta}{{\bm{\beta}}}
\newcommand{\eye}{{\mathbf{I}}}
\newcommand{\zero}{{\bm{0}}}
\newcommand{\eps}{\epsilon}
\begin{document}

\begin{frontmatter}
\title{A Theoretical Framework for Bayesian Nonparametric Regression}
\runtitle{A Theoretical Framework for Bayesian Regression}



\begin{aug}
\author{\fnms{Fangzheng} \snm{Xie}\thanksref{addr1} \ead[label=e1]{fxie5@jhu.edu}},
\author{\fnms{Wei} \snm{Jin}\thanksref{addr1}\ead[label=e1,mark]{wjin@jhu.edu}},
\and
\author{\fnms{Yanxun} \snm{Xu}\thanksref{addr1,addr2}\ead[label=e2]{yanxun.xu@jhu.edu}}

\runauthor{F. Xie, W. Jin and Y. Xu}

\address[addr1]{Department of Applied Mathematics and Statistics, Johns Hopkins University}
\address[addr2]{Correspondence should be addressed to 
				\printead{e2}}

\end{aug}

\begin{abstract}
We develop a unifying framework for Bayesian nonparametric regression to study the rates of contraction with respect to the integrated $L_2$-distance without assuming the regression function space to be uniformly bounded. The framework is very flexible and can be applied to a wide class of nonparametric prior models. Three non-trivial applications of the proposed framework are provided: The finite random series regression of an $\alpha$-H\"older function, with adaptive rates of contraction up to a logarithmic factor; The un-modified block prior regression of an $\alpha$-Sobolev function, with adaptive-and-exact rates of contraction; The Gaussian spline regression of an $\alpha$-H\"older function, with the near optimal posterior contraction. These applications serve as generalization or complement of their respective results in the literature. 
Extensions to the fixed-design regression problem and sparse additive models in high dimensions are discussed as well. 
\end{abstract}

\begin{keyword}
\kwd{Bayesian nonparametric regression}
\kwd{integrated $L_2$-distance}
\kwd{orthonormal random series}
\kwd{rate of contraction}
\end{keyword}

\end{frontmatter}

\section{Introduction} 
\label{sec:introduction}
  Consider the standard nonparametric regression problem
  $y_i=f(\bx_i)+e_i$, $i=1,\cdots,n$,
  where the set of predictors $(\bx_i)_{i=1}^n$ are referred to as design (points) and take values in $[0,1]^p\subset\mathbb{R}^p$, $e_i$'s are independent and identically distributed (i.i.d.) {mean-zero Gaussian} noises with $\var(e_i)=\sigma^2$, and $y_i$'s are responses. We follow the popular Bayesian approach by assigning $f$
  a prior distribution, and perform inference tasks by finding the posterior distribution of $f$ given the observations $(\bx_i,y_i)_{i=1}^n$. 
  
  We propose a theoretical framework for Bayesian nonparametric regression to study the rates of contraction with respect to the integrated $L_2$-distance
  \[
  \|f-g\|_2=\left\{\int_{[0,1]^p} \left[f(\bx)-g(\bx)\right]^2\mathrm{d}\bx\right\}^{1/2}.\]
  The framework is very flexible and can be applied to a wide class of nonparametric prior models. In particular, we emphasize that it allows the space of regression functions to be unbounded, including the renowned Gaussian process priors as special examples.
  
  Rates of contraction of posterior distributions for Bayesian nonparametric priors have been studied extensively. Following the earliest framework on generic rates of contraction theorems with i.i.d. data proposed by \cite{ghosal2000convergence}, specific examples for density estimation via Dirichlet process mixture models \cite{canale2017posterior,ghosal2007posterior,ghosal2001entropies,shen2013adaptive} 
  and location-scale mixture models \cite{kruijer2010adaptive,xie2017bayesian} are discussed. For nonparametric regression, the rates of contraction had not been discussed until \cite{ghosal2007convergence}, who develop a generic framework for fixed-design nonparametric regression to study rates of contraction with respect to the empirical $L_2$-distance. There are extensive studies for various priors that fall into this framework, { including location-scale mixture priors \cite{de2010adaptive}, conditional Gaussian tensor-product splines \cite{de2012adaptive}, and Gaussian processes \cite{van2008rates,van2009adaptive}}, among which adaptive rates are obtained in \cite{de2010adaptive,de2012adaptive,van2009adaptive}. 

  Although it is interesting to achieve adaptive rates of contraction with respect to the empirical $L_2$-distance for nonparametric regression, this might be restrictive since the empirical $L_2$-distance quantifies the convergence of functions only at the given design points. In nonparametric regression, one also expects that the error between the estimated function and the true function can be globally small over the whole design space \cite{Xie2017KEMPOL}, \emph{i.e.}, small mean-squared error for out-of-sample prediction. Therefore the integrated $L_2$-distance is a natural choice. For Gaussian processes, \cite{vaart2011information, yang2017frequentist} provide contraction rates for nonparametric regression with respect to the integrated $L_2$ and $L_\infty$-distance, respectively. A novel spike-and-slab wavelet series prior is constructed in \cite{yoo2017adaptive} to achieve adaptive contraction with respect to the stronger $L_\infty$-distance. These examples however, take advantage of their respective prior structures and may not be easily generalized. 
  A closely related reference is \cite{JMLR:v16:pati15a}, which discusses the rates of contraction of the rescaled-Gaussian process prior for nonparametric random-design regression with respect to the integrated $L_1$-distance, which is weaker than the integrated $L_2$-distance. 
  Although a generic framework for the integrated $L_2$-distance is presented in \cite{huang2004convergence}, the prior there is imposed on a uniformly bounded function space and hence rules out some popular priors, \emph{e.g.}, the popular Gaussian process prior \cite{rasmussen2006gaussian}. 

  {It is therefore natural to ask the following fundamental question: for Bayesian nonparametric regression, can one build a unifying  framework to study rates of contraction for various priors with respect to the integrated $L_2$-distance without assuming the uniform boundedness of the regression function space?} In this paper we provide a positive answer to this question.  
  The major contribution of this work is that we prove the existence of an ad-hoc test function that is required in the generic rates of contraction framework in \cite{ghosal2000convergence} by leveraging  Bernstein's inequality and 
 imposing certain structural assumption on the sieves with  large prior probabilities. 
 This is made clear in Section \ref{sec:the_framework_and_main_results}.
  Furthermore, we do not require the prior to be supported on a uniformly bounded function space. 
  Consequently, we are able to establish a general rate of contraction theorem with respect to the integrated $L_2$-distance for Bayesian nonparametric regression. Examples of applications falling into this framework include the finite random series prior \cite{rivoirard2012posterior,scricciolo2006convergence}, the (un-modified) block prior \cite{gao2016rate}, and the Gaussian splines prior \cite{de2012adaptive}.
  {In particular, for the block prior regression, rather than modifying the block prior by conditioning on a truncated function space as in \cite{gao2016rate} with a known upper bound for the unknown true regression function, we prove that the un-modified block prior automatically yields rate-exact Bayesian adaptation for nonparametric regression without such a truncation.}
  We further extend the proposed framework to the fixed design regression and sparse additive models in high dimensions. 
  The analyses of the above applications and extensions under the proposed framework also generalize their respective results in the literature. These improvements and generalizations are made clear in Sections \ref{sec:applications} and \ref{sec:extensions}. 


  The layout of this paper is as follows. In Section \ref{sec:the_framework_and_main_results} we introduce the random series framework for Bayesian nonparametric regression and present the main result concerning 
  rates of contraction. As applications of the main result, we derive the rates of contraction of various renowned priors for nonparametric regression in the literature with substantial improvements in Section \ref{sec:applications}. 
  Section \ref{sec:extensions} elaborates on extensions of the proposed framework to the fixed design regression problem and sparse additive models in high dimensions. 
  The technical proofs of main results are deferred to Section \ref{sec:proofs}.

\subsection*{Notations}
  For $1\leq r\leq\infty$, we use $\|\cdot\|_r$ to denote both the $\ell_r$-norm on any finite dimensional Euclidean space and the integrated $L_r$-norm of a measurable function (with respect to the Lebesgue measure). In particular, for any function $f\in L_2([0,1]^p)$, we use $\|f\|_2$ to denote the integrated $L_2$-norm defined to be $\|f\|_2^2=\int_{[0,1]^p}f^2(\bx)\mathrm{d}\bx$. We follow the convention that when $r=2$, the subscript is omitted, \emph{i.e.}, $\|\cdot\|_2=\|\cdot\|$. 
  The Hilbert space $l^2$ denotes the space of sequences that are squared-summable. 
  We use $\lfloor x\rfloor$ to denote the maximal integer no greater than $x$, and $\lceil x\rceil$ to denote the minimum integer no less than $x$. The notations $a\lesssim b$ and $a\gtrsim b$ denote the inequalities up to a positive multiplicative constant, and we write $a\asymp b$ if $a\lesssim b$ and $a\gtrsim b$. 
  {Throughout capital letters $C, C_1, \tilde{C}, C', D, D_1, \tilde{D}, D', \cdots$ are used to denote generic positive constants and their values might change from line to line unless particularly specified, but are universal and unimportant for the analysis.} 

  We refer to $\calP$ as a statistical model if it consists of a class of densities on a sample space $\calX$ with respect to some underlying $\sigma$-finite measure. 
  Given a (frequentist) statistical model $\calP$ and the i.i.d. data $(\bw_i)_{i=1}^n$ from some $P\in\calP$, the prior and the posterior distribution on $\calP$ are always denoted by $\Pi(\cdot)$ and $\Pi(\cdot\mid\bw_1,\cdots,\bw_n)$, respectively. Given a function $f:\calX\to\mathbb{R}$, we use $\mathbb{P}_nf=n^{-1}\sum_{i=1}^nf(\bx_i)$ to denote the empirical measure of $f$, and $\mathbb{G}_nf=n^{-1/2}\sum_{i=1}^n\left[f(\bx_i)-\mathbb{E}f(\bx_i)\right]$ to denote the empirical process of $f$, given the i.i.d. data $(\bx_i)_{i=1}^n$. With a slight abuse of notations, when applying to a set of design points $(\bx_i)_{i=1}^n$, we also denote $\mathbb{P}_nf=n^{-1}\sum_{i=1}^nf(\bx_i)$ and $\mathbb{G}_nf = n^{-1/2}\sum_{i=1}^n\left[f(\bx_i)-\mathbb{E}f(\bx_i)\right]$ to be the empirical measure and empirical process, even when the design points $(\bx_i)_{i=1}^n$ are deterministic. 
  In particular, $\phi$ denotes the probability density function of the (univariate) standard normal distribution, and we use the shorthand notation $\phi_\sigma(y)=\phi(y/\sigma)/\sigma$ to denote the density of $\mathrm{N}(0,\sigma^2)$. For a metric space $(\calF, d)$, for any $\epsilon>0$, the $\epsilon$-covering number of $(\calF,d)$, denoted by $\calN(\epsilon,\calF,d)$, is defined to be the minimum number of $\epsilon$-balls of the form $\{g\in\calF:d(f,g)<\epsilon\}$ that are needed to cover $\calF$.

\section{The framework and main results} 
\label{sec:the_framework_and_main_results}

  Consider the nonparametric regression model: $y_i=f(\bx_i)+e_i$, where $(e_i)_{i=1}^n$ are i.i.d. mean-zero {Gaussian noises with $\var(e_i)=\sigma^2$}, and $(\bx_i)_{i=1}^n$ are design points taking values in $[0,1]^p$. Unless otherwise stated,
  the design points $(\bx_i)_{i=1}^n$ are assumed to be independently and uniformly sampled for simplicity throughout the paper. Our framework naturally adapts to the case where the design points are independently sampled from a density function that is bounded away from $0$ and $\infty$. We assume that the responses $y_i$'s are generated from $y_i=f_0(\bx_i)+e_i$ for some unknown $f_0\in L_2([0,1]^p)$, thus the data $\calD_n = (\bx_i,y_i)_{i=1}^n$ can be regarded as i.i.d. samples from a distribution $\mathbb{P}_0$ with joint density $p_0(\bx,y)=\phi_\sigma(y-f_0(\bx))$. 
  Throughout we assume that the variance $\sigma^2$ of the noises is known, but our framework can be easily extended to the case where $\sigma$ is unknown by placing a prior on $\sigma$ that is supported on a compact interval contained in $(0,\infty)$ with a density bounded away from $0$ and $\infty$
  (see, for example, Section 2.2.1 in \cite{de2010adaptive} and Theorem 3.3 in \cite{van2008rates}). 

  Before presenting the main result, let us first introduce the basic framework for studying convergence of Bayesian nonparametric regression. In the context of the aforementioned nonparametric regression, by assigning a prior $\Pi$ on the regression function $f$, one obtains the posterior distribution $\Pi(f\in\cdot\mid\calD_n)$ defined through
  \[
  \Pi(f\in A\mid\calD_n)=\frac{\int_A\prod_{i = 1}^n[p_f(\bx_i,y_i)/p_0(\bx_i,y_i)]\Pi(\mathrm{d}f)}{\int\prod_{i = 1}^n [p_f(\bx_i,y_i)/p_0(\bx_i,y_i)]\Pi(\mathrm{d}f)}
  \]
  for any measurable function class $A$, where $p_f(\bx, y) = \phi_\sigma(y - f(\bx))$.
  In order that the posterior distribution $\Pi(\cdot\mid\calD_n)$ contracts to $f_0$ at rate $\eps_n$ with respect to a distance $d$, \emph{i.e.},
  $\Pi(d(f,f_0)>M\eps_n\mid\calD_n)\to 0$
  in $\mathbb{P}_0$-probability for some large constant $M>0$, the authors of \cite{ghosal2000convergence}  proposed the following renowned sufficient conditions, referred to as the prior-concentration-and-testing framework: There exist some constants $D, D'>0$, such that for sufficiently large $n$:
  \begin{enumerate}
    \item The prior concentration condition holds:
    \begin{align}
    \label{eqn:prior_concentration_classical}
    \Pi\left(\mathbb{E}_0\left(\log\frac{p_0}{p_f}\right)\leq\eps_n^2, \mathbb{E}_0\left[\left(\log\frac{p_0}{p_f}\right)^2\right]\leq\eps_n^2\right)\geq\mathrm{e}^{-Dn\eps_n^2}.
    \end{align}
    \item There exists a sequence $(\calF_n)_{n=1}^\infty$ of subsets of $L_2([0, 1]^p)$ (often referred to as the sieves) and test functions $(\phi_n)_{n=1}^\infty$ such that $\Pi(\calF_n^c)\leq\mathrm{e}^{-(D+4)n\eps_n^2}$,
    \begin{align*}
    \mathbb{E}_0\phi_n\to 0,\text{ and }
    \sup_{f\in\calF_n\cap\{d(f,f_0)>M\eps_n\}}\mathbb{E}_f(1-\phi_n)
    \leq \mathrm{e}^{-D'Mn\eps_n^2}.
    \end{align*}
  \end{enumerate}
However, the above framework 
is not instructive for constructing the appropriate sieves $(\calF_n)_{n = 1}^\infty$ nor the desired test functions $(\phi_n)_{n = 1}^\infty$ for studying the rates of contraction for nonparametric regression with respect to $\|\cdot\|_2$. Specifically, it does not provide a guidance on how to construct the desired sieves, or what their structural features are. 
 The major contribution of this work, in contrast, is that we impose certain structural assumption on the sieves to construct the desired test functions. 
By doing so, we are able to modify the entire framework so that it can be applied to a variety of nonparametric regression priors for us 
  to derive the corresponding posterior contraction rates. 

  The following local testing lemma is the first technical contribution of this work. It also serves as a building block to construct the desired test functions required in the prior-concentration-and-testing framework. 
  \begin{lemma}\label{lemma:local_testability}
  For any $m\in\mathbb{N}_+$ and $\delta > 0$, assume the class of functions $\calF_m(\delta)$ satisfies
  \begin{align}\label{eqn:sieve_property}
  f\in\calF_{m}(\delta)
  \Longrightarrow
  \|f - f_0\|_\infty^2 \leq \eta (m\|f - f_0\|_2^2 + \delta^2)
  \end{align}
  for some constant $\eta > 0$. Then for any $f_1\in\calF_m(\delta)$ with $\sqrt{n}\|f_1-f_0\|_2>1$, there exists a test function $\phi_n:(\calX\times\calY)^n\to[0,1]$ such that
    \begin{align}
    \mathbb{E}_0\phi_n&\leq\exp\left(-Cn\|f_1-f_0\|_2^2\right),\nonumber\\
    \sup_{\{f\in\calF_m(\delta):\|f-f_1\|_2 \leq\xi \|f_0-f_1\|_2\}}\mathbb{E}_f(1-\phi_n)&\leq
    \exp\left(-Cn\|f_1-f_0\|_2^2\right)
    \nonumber\\&\quad
    +2
    \exp\left(-\frac{Cn\|f_1-f_0\|^2_2}{m\|f_1-f_0\|_2^2+\delta^2}\right)\nonumber
    \end{align}  
    for some constant $C>0$ and $\xi\in(0,1)$.
  \end{lemma}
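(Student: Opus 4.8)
\medskip
\noindent\textbf{Proof proposal.}
The plan is to use a single linear test statistic that projects the centered responses onto the direction $f_1-f_0$, and to control every design-dependent quantity by Bernstein's inequality --- which is exactly where the structural assumption \eqref{eqn:sieve_property} enters. Write $g=f_1-f_0$ and $\rho=\|g\|_2=\|f_1-f_0\|_2>0$, fix $\xi=1/2$ (any fixed value in $(0,1)$ will do), set the threshold $t=\tfrac12(1-\xi)\rho^2$, and let $A=\{\mathbb{P}_ng^2\le\tfrac32\rho^2\}$, an event depending only on the design. The test I would take is
\[
\phi_n=\mathbbm{1}\{T_n>t\}\,\mathbbm{1}_A,\qquad
T_n=\frac1n\sum_{i=1}^n\bigl(y_i-f_0(\bx_i)\bigr)\bigl(f_1(\bx_i)-f_0(\bx_i)\bigr).
\]
Since the design is i.i.d.\ uniform under both $\mathbb{P}_0$ and $\mathbb{P}_f$, all probabilities of design events below are unambiguous.

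For the type-I error I would condition on the design: under $\mathbb{P}_0$ one has $y_i-f_0(\bx_i)=e_i$, so $T_n$ is conditionally $\mathrm{N}(0,\sigma^2\mathbb{P}_ng^2/n)$, and on $A$ the conditional variance is at most $3\sigma^2\rho^2/(2n)$. The Gaussian tail bound then gives $\mathbb{P}_0(T_n>t\mid(\bx_i)_{i=1}^n)\le\exp\{-t^2n/(3\sigma^2\rho^2)\}=\exp\{-(1-\xi)^2n\rho^2/(12\sigma^2)\}$ on $A$, and since $\mathbbm{1}_A$ is design-measurable, $\mathbb{E}_0\phi_n=\mathbb{E}_0[\mathbbm{1}_A\,\mathbb{P}_0(T_n>t\mid(\bx_i)_{i=1}^n)]\le\exp\{-(1-\xi)^2n\rho^2/(12\sigma^2)\}$. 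This is already a single clean exponential of the required form $\exp(-Cn\|f_1-f_0\|_2^2)$, and it is precisely to get one clean term rather than a sum that the design event $A$ is built into $\phi_n$.

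For the type-II error I would fix an arbitrary $f$ in the alternative; no uniformity over $f$ is needed because the target bound does not depend on $f$. Write $\{\phi_n=0\}=A^c\cup(A\cap\{T_n\le t\})$. Under $\mathbb{P}_f$, conditionally on the design, $T_n\sim\mathrm{N}(\mu_f,\sigma^2\mathbb{P}_ng^2/n)$ with $\mu_f=\mathbb{P}_n[(f-f_0)g]$, whose design-mean equals $\langle f-f_0,g\rangle=\|g\|_2^2+\langle f-f_1,g\rangle\ge(1-\xi)\rho^2$ by Cauchy--Schwarz and $\|f-f_1\|_2\le\xi\rho$. Let $B=\{\mu_f\ge\tfrac34(1-\xi)\rho^2\}$, again a design event. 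On $A\cap B$ the conditional mean of $T_n$ exceeds $t$ by at least $\tfrac14(1-\xi)\rho^2$ while its conditional variance is at most $3\sigma^2\rho^2/(2n)$, so the Gaussian tail gives $\mathbb{P}_f(T_n\le t\mid(\bx_i)_{i=1}^n)\le\exp\{-(1-\xi)^2n\rho^2/(48\sigma^2)\}$ there; integrating out the design, $\mathbb{E}_f(1-\phi_n)\le\exp\{-(1-\xi)^2n\rho^2/(48\sigma^2)\}+\mathbb{P}(A^c)+\mathbb{P}(B^c)$.

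What remains --- and what I expect to be the only genuine (if modest) technical point --- is bounding $\mathbb{P}(A^c)$ and $\mathbb{P}(B^c)$ by Bernstein's inequality, which is where \eqref{eqn:sieve_property} is indispensable. For $\mathbb{P}(A^c)$ the i.i.d.\ summands $g(\bx_i)^2$ are bounded by $\|g\|_\infty^2$ with variance at most $\|g\|_\infty^2\rho^2$, so a one-sided Bernstein bound gives $\mathbb{P}(A^c)\le\exp\{-cn\rho^2/\|g\|_\infty^2\}$, and $\|g\|_\infty^2=\|f_1-f_0\|_\infty^2\le\eta(m\rho^2+\delta^2)$ by \eqref{eqn:sieve_property} applied to $f_1$. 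For $\mathbb{P}(B^c)$, from $\|f-f_0\|_2\le(1+\xi)\rho$ and \eqref{eqn:sieve_property} applied to $f$ we get $\|f-f_0\|_\infty^2\le\eta(1+\xi)^2(m\rho^2+\delta^2)$, so the summands $(f-f_0)(\bx_i)g(\bx_i)$ defining $\mu_f$ have sup-norm of order $\eta(m\rho^2+\delta^2)$ and variance of order $\eta(m\rho^2+\delta^2)\rho^2$; a one-sided Bernstein bound for $\{\mu_f-\mathbb{E}\mu_f<-\tfrac14(1-\xi)\rho^2\}$ then yields $\mathbb{P}(B^c)\le\exp\{-c'n\rho^2/(\eta(m\rho^2+\delta^2))\}$. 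Combining, $\mathbb{P}(A^c)+\mathbb{P}(B^c)\le2\exp\{-c''n\rho^2/(m\rho^2+\delta^2)\}$, and choosing $C=\min\{(1-\xi)^2/(48\sigma^2),\,c''\}$ delivers both asserted inequalities. (The hypothesis $\sqrt n\,\|f_1-f_0\|_2>1$ guarantees $\rho>0$ so that the conditional Gaussian laws are non-degenerate, and puts the statement in the informative regime where the exponents above are non-trivial.)
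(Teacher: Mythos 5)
Your proposal is correct: every step checks out, the structural assumption \eqref{eqn:sieve_property} is invoked exactly where it is needed (to bound $\|f_1-f_0\|_\infty$ and $\|f-f_0\|_\infty$ inside the Bernstein exponents), and the resulting type I and type II bounds have precisely the form asserted in the lemma, with $C$ depending only on $\sigma$, $\xi$, and $\eta$. The route differs from the paper's in how the design-dependence of the test is handled. The paper takes the Neyman--Pearson statistic $\sum_i y_i(f_1-f_0)(\bx_i)-\tfrac12 n\mathbb{P}_n(f_1^2-f_0^2)$ and subtracts a \emph{random}, self-normalizing threshold proportional to $\|f_1-f_0\|_2\sqrt{n\mathbb{P}_n(f_1-f_0)^2}$; because this threshold scales with the conditional standard deviation of the Gaussian part, the conditional Chernoff bound for the type I error is design-free and no concentration of $\mathbb{P}_n(f_1-f_0)^2$ is needed on that side, while the type II analysis splits on the event $\{\mathbb{P}_n(f-f_1)^2\le\mathbb{P}_n(f_1-f_0)^2/16\}$ and controls two empirical processes by Bernstein. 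You instead use a \emph{deterministic} threshold and build the design event $A=\{\mathbb{P}_ng^2\le\tfrac32\rho^2\}$ into the test itself, which keeps the type I error a single clean exponential and shifts all design-concentration costs ($\mathbb{P}(A^c)+\mathbb{P}(B^c)$) to the type II side, where the lemma's statement tolerates the extra $2\exp\{-Cn\rho^2/(m\rho^2+\delta^2)\}$ term. Your version is arguably more transparent (the two Bernstein applications are to plain i.i.d.\ averages $\mathbb{P}_ng^2$ and $\mathbb{P}_n[(f-f_0)g]$ rather than to differences of empirical processes), at the cost of a test that is slightly less canonical than the likelihood-ratio form the paper motivates; the constants and the choice of $\xi$ differ but are immaterial.
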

  The key ingredient of Lemma \ref{lemma:local_testability} is the assumption \eqref{eqn:sieve_property} on the sieve $\calF_m(\delta)$. 
  By requiring that functions in $\calF_m(\delta)$ cannot explode in $\|f - f_0\|_\infty$ when $\|f - f_0\|_2$ is small, we can utilize Bernstein's inequality and obtain exponentially small type I and type II error probability bounds. 
  Based on Lemma \ref{lemma:local_testability}, we are able to establish the following  global testing lemma. 
  \begin{lemma}
  \label{lemma:global_testability}
  Suppose that $\calF_m(\delta)$ satisfies \eqref{eqn:sieve_property} for an $m\in\mathbb{N}_+$ and a $\delta>0$. Let $(\eps_n)_{n=1}^\infty$ be a sequence with $n\eps_n^2\to\infty$. Then for any $M\geq 0$, there exists a sequence of test functions $(\phi_n)_{n=1}^\infty$ such that
  \begin{align}
  \mathbb{E}_0\phi_n&\leq\sum_{j= M}^\infty N_{nj}\exp\left(-Cnj^2\eps_n^2\right), \nonumber\\
  \sup_{\{f\in\calF_m(\delta):\|f-f_0\|_2>M\eps_n\}}\mathbb{E}_f(1-\phi_n)
  &\leq \exp(-CM^2n\eps_n^2)
  +2\exp\left(-\frac{CM^2n\eps_n^2}{mM^2\eps_n^2+\delta^2}\right)\nonumber,
  \end{align}
  where $N_{nj}=\calN(\xi j\eps_n,\calS_{nj}(\eps_n),\|\cdot\|_2)$ is the covering number of 
  \[\calS_{nj}(\eps_n)=\left\{f\in\calF_m(\delta):j\eps_n<\|f-f_0\|_2\leq (j+1)\eps_n\right\},\] and $C$ is some positive constant.
  \end{lemma}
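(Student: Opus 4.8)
The plan is to run the standard ``peeling'' argument: cover the alternative by the local tests furnished by Lemma~\ref{lemma:local_testability}, one spherical shell at a time. First I would reduce to the case of a positive integer $M\geq1$ with all covering numbers $N_{nj}$ finite: if $M=0$ the claimed type-I bound already exceeds $1$ (its $j=0$ term is $N_{n0}\geq1$ whenever the corresponding shell is nonempty), and if some $N_{nj}=\infty$ that bound is vacuous, so in either case a constant test suffices. For integer $M\geq1$ I would decompose
\[
\bigl\{f\in\calF_m(\delta):\|f-f_0\|_2>M\eps_n\bigr\}=\bigcup_{j\geq M}\calS_{nj}(\eps_n),
\]
and note that since $n\eps_n^2\to\infty$ we have $\sqrt n\,\eps_n>1$ for all large $n$, so every $f_1$ in any shell $\calS_{nj}(\eps_n)$ with $j\geq M\geq1$ satisfies $\sqrt n\|f_1-f_0\|_2>\sqrt n\,j\eps_n>1$; hence Lemma~\ref{lemma:local_testability} is applicable at such $f_1$.

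Next, for each $j\geq M$ I would take a minimal $\xi j\eps_n$-net $f_{j,1},\dots,f_{j,N_{nj}}\in\calS_{nj}(\eps_n)$ of the shell $\calS_{nj}(\eps_n)$ — here $\xi\in(0,1)$ is the constant produced by Lemma~\ref{lemma:local_testability} — and let $\phi_{n,j,k}$ be the test that lemma yields at $f_1=f_{j,k}$. Then I set $\phi_n=\sup_{j\geq M}\sup_{1\leq k\leq N_{nj}}\phi_{n,j,k}$, a measurable $[0,1]$-valued function. Since $\|f_{j,k}-f_0\|_2>j\eps_n$ and each $\phi_{n,j,k}\leq1$, a union bound gives
\[
\mathbb{E}_0\phi_n\leq\sum_{j\geq M}\sum_{k=1}^{N_{nj}}\mathbb{E}_0\phi_{n,j,k}\leq\sum_{j\geq M}N_{nj}\exp\bigl(-Cnj^2\eps_n^2\bigr),
\]
which is the asserted type-I bound.

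For the type-II bound, I would take any $f\in\calF_m(\delta)$ with $\|f-f_0\|_2>M\eps_n$, locate the shell $\calS_{nj}(\eps_n)\ni f$ with $j\geq M$, and pick $k$ with $\|f-f_{j,k}\|_2\leq\xi j\eps_n$. Because $\|f_0-f_{j,k}\|_2>j\eps_n$, this gives $\|f-f_{j,k}\|_2\leq\xi\|f_0-f_{j,k}\|_2$, so $f$ falls inside the alternative-neighbourhood controlled by $\phi_{n,j,k}$ in Lemma~\ref{lemma:local_testability}; hence $1-\phi_n\leq1-\phi_{n,j,k}$ and
\[
\mathbb{E}_f(1-\phi_n)\leq\exp\bigl(-Cn\|f_{j,k}-f_0\|_2^2\bigr)+2\exp\!\left(-\frac{Cn\|f_{j,k}-f_0\|_2^2}{m\|f_{j,k}-f_0\|_2^2+\delta^2}\right).
\]
To finish I would replace $\|f_{j,k}-f_0\|_2$ by its uniform lower bound $M\eps_n$: the first term is at most $\exp(-CnM^2\eps_n^2)$ since $\|f_{j,k}-f_0\|_2>j\eps_n\geq M\eps_n$, and for the second term I would use that $t\mapsto t/(mt+\delta^2)$ is strictly increasing on $(0,\infty)$ (derivative $\delta^2/(mt+\delta^2)^2>0$), so with $t=\|f_{j,k}-f_0\|_2^2\geq M^2\eps_n^2$ the exponent there is at least $CnM^2\eps_n^2/(mM^2\eps_n^2+\delta^2)$. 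Taking the supremum over $f$ then gives precisely the stated bound (with a possibly smaller constant $C$).

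The step I expect to be most delicate is this last one: one must make sure that a $\xi j\eps_n$-net of $\calS_{nj}(\eps_n)$ is fine enough for the local-test neighbourhoods $\{f:\|f-f_1\|_2\leq\xi\|f_0-f_1\|_2\}$ of Lemma~\ref{lemma:local_testability} to cover the shell — this works exactly because $\|f_0-f_1\|_2>j\eps_n$ there — and that the monotonicity of $t\mapsto t/(mt+\delta^2)$ lets one degrade \emph{both} exponential terms simultaneously to their worst case $\|f_{j,k}-f_0\|_2^2=M^2\eps_n^2$. Everything else — the reduction to integer $M\geq1$ and the union bound over the countably many shells and net points — is routine, and the genuine analytic content has already been spent in Lemma~\ref{lemma:local_testability}.
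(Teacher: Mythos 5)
Your proposal is correct and follows essentially the same route as the paper's proof: the same shell decomposition, the same $\xi j\eps_n$-nets combined with $\|f_{njl}-f_0\|_2>j\eps_n$ to place each $f$ in a neighbourhood controlled by Lemma~\ref{lemma:local_testability}, the same union bound for the type-I error, and the same monotonicity of $t\mapsto t/(mt+\delta^2)$ to reduce the type-II bound to its worst case at $j=M$. Your extra care in checking the applicability condition $\sqrt{n}\|f_1-f_0\|_2>1$ and in disposing of the degenerate cases $M=0$ and $N_{nj}=\infty$ is a harmless refinement of what the paper leaves implicit.
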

  The prior concentration condition \eqref{eqn:prior_concentration_classical} is  very important  in the study of Bayes theory. It guarantees that the denominator appearing in the posterior distribution $\int [p_f(\bx_i,y_i)/p_0(\bx_i,y_i)]\Pi(\mathrm{d}f)$ can be lower bounded by $\mathrm{e}^{-D'n\eps_n^2}$ for some constant $D'>0$ with large probability (see, for example, lemma 8.1 in \cite{ghosal2000convergence}). In the context of normal regression, the Kullback-Leibler divergence is proportional to the integrated $L_2$-distance between two regression functions. Motivated by this observation, we establish the following lemma that yields an exponential lower bound for the denominator 
  \[\int\prod_{i = 1}^n\frac{p_f(\bx_i,y_i)}{p_0(\bx_i,y_i)}\Pi(\mathrm{d}f)\]
 in the posterior distribution under the proposed framework. 
  \begin{lemma}
  \label{lemma:evidence_LB}
  Denote
  \[
  B(m,\eps,\omega)=\left\{f:\|f-f_0\|_2<\eps,\|f - f_0\|_\infty ^ 2\leq \eta'(m\|f - f_0\|_2^2 + \omega^2)\right\}
  \]
  for any $\eps,\delta>0$ and $m\in\mathbb{N}_+$, where $\eta' > 0$ is some constant. 
  {Suppose sequences $(\eps_n)_{n=1}^\infty$ and $(k_n)_{n=1}^\infty$ satisfy $\eps_n\to0$, $n\eps_n^2\to\infty$, $k_n\eps_n^2=O(1)$, and $\omega$ is some constant. }Then for any constant $C>0$, 
  \begin{align}
  \mathbb{P}_0\left(\int\prod_{i = 1}^n\frac{p_f(\bx_i,y_i)}{p_0(\bx_i,y_i)}\Pi(\mathrm{d}f)\leq {\Pi\left(B(k_n,\eps_n,\omega)\right)} \exp\left[-\left(C+\frac{1}{\sigma^2}\right)n\eps_n^2\right]\right)\to 0\nonumber.
  \end{align}
  \end{lemma}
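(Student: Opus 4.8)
I would run the classical evidence lower bound scheme (as in Lemma~8.1 of \cite{ghosal2000convergence}), specialized to the Gaussian regression likelihood and sharpened by the inequality built into $B_n:=B(k_n,\eps_n,\omega)$. We may assume $\Pi(B_n)>0$, since otherwise the claim is immediate ($\int\prod_i (p_f/p_0)\,\Pi(\mathrm{d}f)$ is $\mathbb{P}_0$-a.s.\ strictly positive, being an integral of everywhere-positive likelihood ratios). First restrict the integral to $B_n$, write $\Pi_{B_n}=\Pi(\cdot\cap B_n)/\Pi(B_n)$, and apply Jensen's inequality to $\exp(\cdot)$:
\[
\int\prod_{i=1}^n\frac{p_f(\bx_i,y_i)}{p_0(\bx_i,y_i)}\Pi(\mathrm{d}f)
\ \geq\
\Pi(B_n)\exp\!\left(\int\sum_{i=1}^n\log\frac{p_f(\bx_i,y_i)}{p_0(\bx_i,y_i)}\,\Pi_{B_n}(\mathrm{d}f)\right).
\]
Since $y_i=f_0(\bx_i)+e_i$ and $p_f(\bx,y)=\phi_\sigma(y-f(\bx))$, a direct expansion gives $\log\{p_f(\bx_i,y_i)/p_0(\bx_i,y_i)\}=-\{f(\bx_i)-f_0(\bx_i)\}^2/(2\sigma^2)-e_i\{f_0(\bx_i)-f(\bx_i)\}/\sigma^2$, so the exponent above equals $-W/(2\sigma^2)-Z/\sigma^2$ with
\[
W=\sum_{i=1}^n\int_{B_n}\{f(\bx_i)-f_0(\bx_i)\}^2\Pi_{B_n}(\mathrm{d}f),
\quad
Z=\sum_{i=1}^n e_i\int_{B_n}\{f_0(\bx_i)-f(\bx_i)\}\Pi_{B_n}(\mathrm{d}f).
\]
It then suffices to show that $W\le 2n\eps_n^2$ and $Z\le (C/2)\sigma^2 n\eps_n^2$ hold simultaneously with $\mathbb{P}_0$-probability tending to one, because on that event $W/(2\sigma^2)+Z/\sigma^2<(C+1/\sigma^2)n\eps_n^2$.

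The crux is the structural constraint in $B_n$: every $f\in B_n$ satisfies $\|f-f_0\|_\infty^2\le\eta'(k_n\|f-f_0\|_2^2+\omega^2)<\eta'(k_n\eps_n^2+\omega^2)$, which, since $k_n\eps_n^2=O(1)$ and $\omega$ is a constant, is bounded by a fixed constant $b<\infty$ for all large $n$; hence all integrands in $W$ and $Z$ are uniformly bounded. For $W$, the summands $V_i=\int_{B_n}\{f(\bx_i)-f_0(\bx_i)\}^2\Pi_{B_n}(\mathrm{d}f)$ are i.i.d.\ (the design points being i.i.d.\ uniform on $[0,1]^p$), lie in $[0,b]$, and by Fubini have $\mathbb{E}_0 V_i=\int_{B_n}\|f-f_0\|_2^2\,\Pi_{B_n}(\mathrm{d}f)\le\eps_n^2$; Bernstein's inequality together with $\var(V_i)\le b\,\mathbb{E}_0 V_i\le b\eps_n^2$ then gives $\mathbb{P}_0(W>2n\eps_n^2)\le\exp(-c\,n\eps_n^2)\to 0$, using $n\eps_n^2\to\infty$. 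For $Z$, note $W$ depends on the design alone; conditionally on the design, $Z$ is a centered Gaussian with variance $\sigma^2\sum_{i=1}^n b_i^2$ where $b_i=\int_{B_n}\{f_0(\bx_i)-f(\bx_i)\}\Pi_{B_n}(\mathrm{d}f)$, and by Cauchy--Schwarz $\sum_i b_i^2\le W$. Hence on $\{W\le 2n\eps_n^2\}$ the Gaussian tail bound yields $\mathbb{P}_0(Z>(C/2)\sigma^2 n\eps_n^2\mid\text{design})\le\exp(-c'C^2 n\eps_n^2)$, so $\mathbb{P}_0(Z>(C/2)\sigma^2 n\eps_n^2,\,W\le 2n\eps_n^2)\to 0$. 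Combining the two estimates with the reduction above completes the proof.

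I expect the only genuine obstacle to be the concentration of $W$: one needs an \emph{exponentially} small bound for $\mathbb{P}_0(W>2n\eps_n^2)$, not just the $O(1)$ bound Markov's inequality would give (which is useless here), and this is available precisely because the defining inequality of $B_n$ forces the uniform $\|\cdot\|_\infty$-bound that legitimizes Bernstein's inequality — without it, $W$ and $Z$ could be heavy-tailed. The remaining bookkeeping is routine once the two concentration statements are in hand; in particular, the coefficient $1/\sigma^2$ in the exponent of the stated bound is exactly what is needed to absorb the deterministic part $n\eps_n^2$ of $W\le 2n\eps_n^2$, leaving the arbitrarily small constant $C$ free for the Gaussian fluctuation $Z$.
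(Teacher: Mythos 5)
Your proof is correct and follows essentially the same route as the paper: Jensen's inequality on the restricted prior, decomposition into a quadratic term and a Gaussian linear term, with the sup-norm constraint in $B_n$ supplying the uniform bound that makes Bernstein's inequality applicable. The only (minor, and arguably cleaner) deviation is that you control the conditional variance of the linear term by bounding $\sum_i b_i^2\leq W$ and reusing the Bernstein event, whereas the paper runs a separate Chebyshev-plus-dominated-convergence argument for $\mathbb{P}_nV_{ni}^2$.
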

  In some cases it is also straightforward to consider the prior concentration with respect to the stronger $\|\cdot\|_\infty$-norm. For example, for a wide class of Gaussian process priors, the prior concentration $\Pi(\|f-f_0\|_\infty<\eps)$ has been extensively studied (see, for example, \cite{ghosal2017fundamentals,van2008rates,van2009adaptive} for more details). 
  {
  \begin{lemma}
  \label{lemma:evidence_LB_infinity_norm}
  Suppose the sequence $(\eps_n)_{n=1}^\infty$ satisfies $\eps_n\to 0$ and $n\eps_n^2\to\infty$. Then for any constant $C>0$, 
  \begin{align}
  \mathbb{P}_0\left(\int\frac{p_f(\bx_i,y_i)}{p_0(\bx_i,y_i)}\Pi(\mathrm{d}f)\leq \Pi(\|f-f_0\|_\infty<\eps_n) \exp\left[-\left(C+\frac{1}{\sigma^2}\right)n\eps_n^2\right]\right)\to 0\nonumber.
  \end{align}
  \end{lemma}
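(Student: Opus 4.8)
\emph{Proof strategy.}\quad The plan is to adapt the classical evidence (prior-mass) lower bound (lemma~8.1 of \cite{ghosal2000convergence}) to the present setting, exploiting the explicit Gaussian form of the likelihood so that the Kullback--Leibler-type quantities reduce to pointwise deviations of $f$ from $f_0$. If $\Pi(\|f-f_0\|_\infty<\eps_n)=0$ the statement is trivial, since the integrand is strictly positive while the threshold inside the probability is then $0$; so assume $\Pi(\|f-f_0\|_\infty<\eps_n)>0$, write $B_n=\{f:\|f-f_0\|_\infty<\eps_n\}$, and let $\Pi_n(\cdot)=\Pi(\,\cdot\cap B_n)/\Pi(B_n)$ be the renormalized restriction of $\Pi$ to $B_n$. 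Restricting the integral to $B_n$ and applying Jensen's inequality with respect to the probability measure $\Pi_n$ gives
\[
\log\int\prod_{i=1}^n\frac{p_f(\bx_i,y_i)}{p_0(\bx_i,y_i)}\,\Pi(\mathrm{d}f)\geq\log\Pi(B_n)+\int\sum_{i=1}^n\log\frac{p_f(\bx_i,y_i)}{p_0(\bx_i,y_i)}\,\Pi_n(\mathrm{d}f).
\]

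Next, using $p_f(\bx,y)=\phi_\sigma(y-f(\bx))$ together with $y_i=f_0(\bx_i)+e_i$, I would expand
\[
\sum_{i=1}^n\log\frac{p_f(\bx_i,y_i)}{p_0(\bx_i,y_i)}=\frac{1}{\sigma^2}\sum_{i=1}^n e_i\big(f(\bx_i)-f_0(\bx_i)\big)-\frac{1}{2\sigma^2}\sum_{i=1}^n\big(f(\bx_i)-f_0(\bx_i)\big)^2,
\]
an expression that depends on $f$ only through $f-f_0$; since $\|f-f_0\|_\infty<\eps_n$ on $B_n$, it is bounded there (given the $e_i$'s and $\bx_i$'s), which legitimizes interchanging $\int\Pi_n(\mathrm{d}f)$ with $\sum_i$ as well as the use of Jensen's inequality. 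Integrating against $\Pi_n$, the quadratic term is at most $n\eps_n^2$ pointwise on $B_n$ and hence contributes at least $-n\eps_n^2/(2\sigma^2)$; writing $g(\bx)=\int\big(f(\bx)-f_0(\bx)\big)\,\Pi_n(\mathrm{d}f)$, a deterministic function with $\|g\|_\infty\le\eps_n$, the linear term equals $Z_n:=\sigma^{-2}\sum_{i=1}^n e_i g(\bx_i)$.

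Finally I would control $Z_n$. Conditionally on the design, $Z_n$ is a mean-zero Gaussian with variance $\sigma^{-2}\sum_{i=1}^n g(\bx_i)^2\le n\eps_n^2/\sigma^2$, so the Gaussian tail bound gives $\mathbb{P}_0\big(Z_n\le-Cn\eps_n^2\big)\le\exp\big(-\tfrac{1}{2}C^2\sigma^2 n\eps_n^2\big)$ uniformly over the design, which tends to $0$ since $n\eps_n^2\to\infty$. On the complement of $\{Z_n\le-Cn\eps_n^2\}$ the two previous bounds combine to give
\[
\int\prod_{i=1}^n\frac{p_f(\bx_i,y_i)}{p_0(\bx_i,y_i)}\,\Pi(\mathrm{d}f)\geq\Pi(\|f-f_0\|_\infty<\eps_n)\exp\Big[-\Big(C+\tfrac{1}{2\sigma^2}\Big)n\eps_n^2\Big]\geq\Pi(\|f-f_0\|_\infty<\eps_n)\exp\Big[-\Big(C+\tfrac{1}{\sigma^2}\Big)n\eps_n^2\Big],
\]
i.e., the event in the statement fails; hence that event has $\mathbb{P}_0$-probability at most $\exp\big(-\tfrac{1}{2}C^2\sigma^2 n\eps_n^2\big)\to0$.

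I do not expect a genuine obstacle: this is a routine instance of the standard prior-mass bound, and essentially the same scheme, with $B(k_n,\eps_n,\omega)$ replacing the $L_\infty$-ball, underlies Lemma~\ref{lemma:evidence_LB}. The only point requiring care is that $f_0$ is not assumed bounded, so the entire computation must be carried out in terms of $f-f_0$ (which is uniformly bounded on $B_n$) in order to guarantee integrability and the validity of Jensen's inequality and the Fubini-type interchange. It is also worth noting that the explicit $1/\sigma^2$ appearing in the exponent originates entirely from the deterministic quadratic term (bounded by $n\eps_n^2/(2\sigma^2)\le n\eps_n^2/\sigma^2$), whereas the stochastic linear term only contributes the arbitrary constant $C$.
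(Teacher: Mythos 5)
Your proposal is correct and follows essentially the same route as the paper: restrict to the $L_\infty$-ball $B_n$, apply Jensen's inequality to the renormalized prior, bound the deterministic quadratic term pointwise by $n\eps_n^2/(2\sigma^2)$, and control the stochastic linear term by a Gaussian tail bound conditional on the design. The only (harmless) difference is that you bound the conditional Gaussian tail uniformly over the design, so you can dispense with the dominated convergence step the paper uses after its conditional Chernoff bound.
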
}
  Now we present the main result regarding the rates of contraction for Bayesian nonparametric regression. The proof is based on the modification of the prior-concentration-and-testing procedure. We also remark that the prior $\Pi$ on $f$ is not necessarily supported on a uniformly bounded function space, which is one of the major contributions of this work. 
  \begin{theorem}[Generic Contraction]\label{thm:generic_contraction}
  Let $(\eps_n)_{n=1}^\infty$ and $(\leps_n)_{n=1}^\infty$ be sequences such that $\min(n\eps_n^2,n\leps_n^2)\to\infty$ as $n\to\infty$, with $0\leq \leps_n\leq\eps_n\to 0$. Assume that the sieve $(\calF_{m_n}(\delta))_{n=1}^\infty$ satisfies \eqref{eqn:sieve_property} with $m_n\eps_n^2\to0$ for some constant $\delta$. In addition, assume that there exists another sequence $(k_n)_{n=1}^\infty\subset\mathbb{N}_+$ such that $k_n\leps_n^2=O(1)$. Suppose the following conditions hold for some constants $\omega, D>0$ and sufficiently large $n$ and $M$:
  \begin{align}
  \label{eqn:summability}
  &\sum_{j= M}^\infty N_{nj}\exp\left(-{Dnj^2\eps_n^2}\right)\to 0,\\
  \label{eqn:prior_mass_on_sieve}
  &\Pi(\calF_{m_n}^c(\delta))\lesssim\exp\left[-{\left(2D+\frac{1}{\sigma^2}\right)n\leps_n^2}\right],\\
  \label{eqn:prior_concentration}
  &{\Pi\left(B(k_n,\leps_n,\omega)\right)\geq\exp\left(-{Dn\leps_n^2}\right)},
  \end{align}
  where $N_{nj}=\calN(\xi j\eps_n,\calS_{nj}(\eps_n),\|\cdot\|_2)$ is the covering number of 
  \[\calS_{nj}=\{f\in\calF_{m_n}(\delta):j\eps_n<\|f-f_0\|_2\leq(j+1)\eps_n\},\] and $B(k_n,\leps_n,\omega)$ is defined in Lemma \ref{lemma:evidence_LB}. Then 
  $\mathbb{E}_0\left[\Pi(\|f-f_0\|_2>M\eps_n\mid\calD_n)\right]\to 0.$
  \end{theorem}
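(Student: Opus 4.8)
The plan is to run the prior-mass-and-testing scheme of \cite{ghosal2000convergence}, but feeding it the explicit test of Lemma~\ref{lemma:global_testability} in place of its abstract test and the evidence lower bound of Lemma~\ref{lemma:evidence_LB} in place of its abstract denominator bound; since neither ingredient needs the prior support to be uniformly bounded, the conclusion follows without any such restriction. Fix $M$ large (to be pinned down at the end), apply Lemma~\ref{lemma:global_testability} with $m=m_n$, the sequence $\eps_n$, and threshold $M$ — admissible since $\calF_{m_n}(\delta)$ obeys \eqref{eqn:sieve_property} and $n\eps_n^2\to\infty$ — and call the resulting test $\phi_n$. Write $N=\int\prod_{i=1}^n[p_f(\bx_i,y_i)/p_0(\bx_i,y_i)]\,\Pi(\mathrm{d}f)$ and set
\[
A_n=\Bigl\{N\ge\Pi\bigl(B(k_n,\leps_n,\omega)\bigr)\exp\bigl[-(\tfrac{D}{2}+\tfrac{1}{\sigma^2})n\leps_n^2\bigr]\Bigr\}.
\]
The elementary inequality $\Pi(\cdot\mid\calD_n)\le\phi_n+(1-\phi_n)\Pi(\cdot\mid\calD_n)$ then gives
\[
\mathbb{E}_0\bigl[\Pi(\|f-f_0\|_2>M\eps_n\mid\calD_n)\bigr]\le\mathbb{E}_0\phi_n+\mathbb{P}_0(A_n^c)+\mathbb{E}_0\bigl[(1-\phi_n)\Pi(\|f-f_0\|_2>M\eps_n\mid\calD_n)\mathbf{1}_{A_n}\bigr].
\]
Here $\mathbb{E}_0\phi_n\to0$ directly from the summability hypothesis \eqref{eqn:summability} (identifying the generic constant in Lemma~\ref{lemma:global_testability} with $D$), while $\mathbb{P}_0(A_n^c)\to0$ by Lemma~\ref{lemma:evidence_LB} applied with $(\eps_n,k_n)$ replaced by $(\leps_n,k_n)$ — legitimate because $\leps_n\to0$, $n\leps_n^2\to\infty$, $k_n\leps_n^2=O(1)$ — combined with the prior-concentration bound \eqref{eqn:prior_concentration}, which forces $\Pi(B(k_n,\leps_n,\omega))\ge\exp(-Dn\leps_n^2)$.

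It remains to treat the third term. On $A_n$ the evidence is at least $L_n:=\exp[-(\tfrac{3D}{2}+\tfrac{1}{\sigma^2})n\leps_n^2]$ by \eqref{eqn:prior_concentration}, so bounding $\Pi(\cdot\mid\calD_n)$ by $N^{-1}$ times the matching numerator and splitting the domain of integration along $\calF_{m_n}(\delta)$ and its complement,
\[
(1-\phi_n)\Pi(\|f-f_0\|_2>M\eps_n\mid\calD_n)\mathbf{1}_{A_n}\le\frac{1-\phi_n}{L_n}\int_{\{f\in\calF_{m_n}(\delta):\,\|f-f_0\|_2>M\eps_n\}}\prod_{i=1}^n\frac{p_f}{p_0}\,\Pi(\mathrm{d}f)+\frac{1}{L_n}\int_{\calF_{m_n}^c(\delta)}\prod_{i=1}^n\frac{p_f}{p_0}\,\Pi(\mathrm{d}f).
\]
Taking $\mathbb{E}_0$ and using Fubini's theorem together with the identities $\mathbb{E}_0\prod_i[p_f/p_0](\bx_i,y_i)=1$ and $\mathbb{E}_0[(1-\phi_n)\prod_i[p_f/p_0](\bx_i,y_i)]=\mathbb{E}_f(1-\phi_n)$, the second term becomes $L_n^{-1}\Pi(\calF_{m_n}^c(\delta))$, which by \eqref{eqn:prior_mass_on_sieve} is at most a constant times $\exp(-\tfrac{D}{2}n\leps_n^2)\to0$; the first term is at most
\[
L_n^{-1}\left[\exp(-DM^2n\eps_n^2)+2\exp\!\left(-\frac{DM^2n\eps_n^2}{m_nM^2\eps_n^2+\delta^2}\right)\right]
\]
by the type~II bound of Lemma~\ref{lemma:global_testability} (crudely bounding the prior mass of the indexing set by $1$). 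Since $\leps_n\le\eps_n$ we have $L_n^{-1}\le\exp[(\tfrac{3D}{2}+\tfrac{1}{\sigma^2})n\eps_n^2]$, and since $m_n\eps_n^2\to0$ we have $m_nM^2\eps_n^2+\delta^2\le2\delta^2$ for all large $n$; hence the displayed quantity is at most a constant times $\exp[(\tfrac{3D}{2}+\tfrac{1}{\sigma^2}-cM^2)n\eps_n^2]$ with $c=\min\{D,\,D/(2\delta^2)\}$, which tends to $0$ once $M$ is taken large enough, because $n\eps_n^2\to\infty$. Collecting the three estimates yields the claim.

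The step I expect to be the crux — and the place where the structural hypothesis \eqref{eqn:sieve_property} together with $m_n\eps_n^2\to0$ is indispensable — is the control of the factor $\exp(-DM^2n\eps_n^2/(m_nM^2\eps_n^2+\delta^2))$ coming out of Lemma~\ref{lemma:global_testability}: only because $m_n\eps_n^2\to0$ does its exponent stay of order $n\eps_n^2$, so that for $M$ large it can absorb the $\exp[(\tfrac{3D}{2}+\tfrac{1}{\sigma^2})n\eps_n^2]$ blow-up of $L_n^{-1}$; were the sieve allowed to grow faster than $\eps_n^{-2}$, this term would dominate and no choice of $M$ would rescue the bound. The only other genuinely delicate point is the bookkeeping of constants — one must invoke Lemma~\ref{lemma:evidence_LB} with a free constant ($D/2$) strictly below the constant $D$ governing \eqref{eqn:prior_mass_on_sieve}, and read \eqref{eqn:summability} with the (same generic) constant produced by Lemma~\ref{lemma:global_testability} — but this is routine once the decomposition above is fixed.
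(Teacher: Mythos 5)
Your proposal is correct and follows essentially the same route as the paper's proof: the same three-way decomposition (test, evidence event, main term), the same use of Lemma~\ref{lemma:global_testability} for the test and Lemma~\ref{lemma:evidence_LB} with constant $D/2$ for the denominator, the same Fubini split over $\calF_{m_n}(\delta)$ and its complement, and the same observation that $m_n\eps_n^2\to 0$ keeps the second type-II term of order $\exp(-cM^2 n\eps_n^2)$ so that a large $M$ absorbs the $\exp[(\tfrac{3D}{2}+\tfrac{1}{\sigma^2})n\leps_n^2]$ factor. The only cosmetic difference is that your event $A_n$ is stated with the prior mass of $B(k_n,\leps_n,\omega)$ already factored in, whereas the paper defines $\calH_n$ with the absolute threshold $\exp[-(\tfrac{3D}{2}+\tfrac{1}{\sigma^2})n\leps_n^2]$ and then uses \eqref{eqn:prior_concentration} to relate the two; these are equivalent.
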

  \begin{remark}\label{remark:infinity_norm_concentration}
  In light of Lemma \ref{lemma:evidence_LB_infinity_norm}, by exploiting the proof of theorem \ref{thm:generic_contraction} we remark that when the assumptions and conditions in theorem \ref{thm:generic_contraction} hold with \eqref{eqn:prior_concentration} replaced by $\Pi(\|f-f_0\|_\infty<\leps_n)\geq\exp(-Dn\leps_n^2)$, the same rate of contraction also holds:
  $\mathbb{E}_0\left[\Pi(\|f-f_0\|_2>M\eps_n\mid\calD_n)\right]\to 0$
  for sufficiently large $M>0$. 
  \end{remark}

\section{Applications} 
\label{sec:applications}
In this section we consider three concrete priors on $f$ for the nonparametric regression problem $y_i=f(x_i)+e_i$, $i=1,\cdots,n$. For simplicity the design points are assumed to independently follow the one-dimensional uniform distribution $\mathrm{Unif}(0, 1)$. For some of the examples, the results can be easily generalized to the case where the design points are multi-dimensional by considering the tensor-product basis functions. 
{The  results  under the proposed framework also generalize their respective counterparts in the literature. }

\subsection{Finite random series regression with adaptive rate} 
\label{sub:sieve_prior_regression_with_adaptive_rate}
The finite random series prior \cite{arbel2013bayesian,rivoirard2012posterior,scricciolo2006convergence,doi:10.1111/sjos.12159} is  popular  in the literature of Bayesian nonparametric theory. It is a class of hierarchical priors that first draw an integer-valued random variable serving as the number of ``terms'' to be used in a finite sum, and then sample the ``term-wise'' parameters given the number of ``terms''. The finite random series prior typically does not depend on the smoothness level of the true function, and often yields minimax-optimal rates of contraction (up to a logarithmic factor) in many  nonparametric problems (\emph{e.g.}, density estimation \cite{rivoirard2012posterior,scricciolo2006convergence} and fixed-design regression \cite{arbel2013bayesian}).  
However, the adaptive rates of contraction for the finite random series prior in the random-design regression with respect to the integrated $L_2$-distance has not been established. In this subsection we address this issue by leveraging the framework proposed in Section \ref{sec:the_framework_and_main_results}.

We first introduce the finite random series prior. Let $(\psi_k)_{k = 1}^\infty$ be the Fourier basis in $L_2([0,1])$, \emph{i.e.}, $\psi_1(x) = 1,\psi_{2k}(x) = \sin k\pi x$, and $\psi_{2k + 1}(x) = \cos k\pi x$, $k\in\mathbb{N}_+$. Writing $f$ in terms of the Fourier series expansion
  $f(\bx)=\sum_{k = 1}^\infty\beta_k\psi_k(\bx)$, we then assign the finite random series prior $\Pi$ on $f$ by considering the following prior distribution on the coefficients $(\beta_k)_{k = 1}^\infty$: first sample an integer-valued random variable $N$ from a density function $\pi_N$ (with respect to the counting measure on $\mathbb{N}_+$), and then given $N = m$ the coefficients $\beta_k$'s are independently sampled according to
\[
\Pi(\mathrm{d}\beta_k\mid N = m) = 
\left\{
\begin{array}{ll}
g(\beta_k)
\mathrm{d}\beta_k,&\quad\text{if }1\leq k\leq m,\\
\delta_0(\mathrm{d}\beta_k),&\quad\text{if } k\geq m,
\end{array}
\right.
\]
where $g$ is an exponential power density $g(x)\propto \exp(-\tau_0|x|^\tau)$ for some $\tau,\tau_0>0$ \cite{scricciolo2011posterior}.
 We further require that 
\begin{align}\label{eqn:sieve_prior_number_of_terms}
\pi_N(m)\geq\exp(-b_0m\log m)\quad\text{and}\quad\sum_{N=m+1}^\infty\pi_N(N)\leq \exp(-b_1m\log m)
\end{align}
for some constants $b_0$, $b_1$. {The zero-truncated Poisson distribution $\pi_N(m)=(\mathrm{e}^\lambda-1)^{-1}\lambda^m/m!\mathbbm{1}(m\geq1)$ satisfies condition \eqref{eqn:sieve_prior_number_of_terms} \cite{xie2017bayesian}.} 


The true regression function $f_0$ is assumed to yield a Fourier expansion $f_0(x) = \sum_{k = 1}^\infty\beta_{0k}\psi_k(x)$ with regard to $(\psi_k)_{k=1}^\infty$, and be in the 
$\alpha$-H\"older ball
\begin{align}
\mathfrak{C}_\alpha(Q)=\left\{f(x)=\sum_{k=1}^\infty\beta_k\psi_k(x):\sum_{k=1}^\infty k^\alpha|\beta_k|\leq Q\right\},\nonumber
\end{align}
where
$\alpha>1/2$ is the smoothness level, and $Q>0$ is the $\alpha$-H\"older radius. 
Note that the construction of the aforementioned finite random series prior does not require the knowledge of the smoothness level $\alpha$. In the literature of Bayes theory, such a procedure is referred to as \emph{adaptive}. 


%

The following theorem shows that the constructed finite random series prior  is adaptive and the rate of contraction $n^{-\alpha/(2\alpha + 1)}(\log n)^t$ with respect to the integrated $L_2$-distance is minimax-optimal up to a logarithmic factor \citep{stone1982optimal}. 

\begin{theorem}\label{thm:sieve_prior_contraction}
Suppose the true regression function
$f_0\in\mathfrak{C}_\alpha(Q)$ for some $\alpha>1/2$ and $Q>0$, and $f$ is imposed the prior $\Pi$ given above.
Then there exists some sufficiently large constant $M>0$ such that 
\[
\mathbb{E}_0\left[\Pi\left(\|f-f_0\|_2>Mn^{-\alpha/(2\alpha+1)}(\log n)^{t}\mid\calD_n\right)\right]\to 0
\]
for any $t>\alpha/(2\alpha+1)$. 
\end{theorem}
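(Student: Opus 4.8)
The plan is to verify the hypotheses of Theorem~\ref{thm:generic_contraction} with the contraction and prior-concentration rates taken equal, $\eps_n=\leps_n=n^{-\alpha/(2\alpha+1)}(\log n)^t$. I would take as sieve the linear span $\calF_{m_n}(\delta)=\mathrm{span}\{\psi_1,\dots,\psi_{m_n}\}$ with $\delta=1$ a fixed constant and truncation level $m_n\asymp n\eps_n^2/\log(n\eps_n^2)$, the auxiliary sequence $k_n=\lceil\eps_n^{-2}\rceil$, and $\omega=1$. Since $\alpha>1/2$ one has $n\eps_n^2\to\infty$, $\eps_n\to0$, $m_n=o(n\eps_n^2)$, $m_n\eps_n^2\asymp n\eps_n^4/\log(n\eps_n^2)\to0$, and $k_n\eps_n^2=O(1)$, so it only remains to check \eqref{eqn:sieve_property}, \eqref{eqn:summability}, \eqref{eqn:prior_mass_on_sieve} and \eqref{eqn:prior_concentration}.

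For the structural condition \eqref{eqn:sieve_property}: if $f=\sum_{k\le m_n}\beta_k\psi_k\in\calF_{m_n}(\delta)$, then $\|f-f_0\|_\infty\le\sum_{k\le m_n}|\beta_k-\beta_{0k}|+\sum_{k>m_n}|\beta_{0k}|$ because $\|\psi_k\|_\infty\le1$; by Cauchy--Schwarz and orthogonality of the Fourier basis the first sum is $\lesssim\sqrt{m_n}\,\|f-f_0\|_2$, while $\sum_{k>m_n}|\beta_{0k}|\le m_n^{-\alpha}\sum_k k^\alpha|\beta_{0k}|\le Q$ since $f_0\in\mathfrak{C}_\alpha(Q)$, so squaring gives $\|f-f_0\|_\infty^2\lesssim m_n\|f-f_0\|_2^2+1$, which is \eqref{eqn:sieve_property} with $\delta=1$ and a suitable $\eta$. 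For \eqref{eqn:summability}: writing $P_{m_n}f_0$ for the $L_2$-projection of $f_0$ onto $\calF_{m_n}(\delta)$, Pythagoras gives $\calS_{nj}\subset\{f\in\calF_{m_n}(\delta):\|f-P_{m_n}f_0\|_2\le(j+1)\eps_n\}$, a ball of radius $\asymp j\eps_n$ in the $m_n$-dimensional space $\calF_{m_n}(\delta)$, so $N_{nj}\le(1+4/\xi)^{m_n}=:e^{Cm_n}$ for all $j\ge1$; since $m_n=o(n\eps_n^2)$, the sum in \eqref{eqn:summability} is at most $2e^{Cm_n-DnM^2\eps_n^2}\to0$. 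For \eqref{eqn:prior_mass_on_sieve}, $f\notin\calF_{m_n}(\delta)$ forces $N\ge m_n+2$, so by \eqref{eqn:sieve_prior_number_of_terms} $\Pi(\calF_{m_n}^c(\delta))\le\exp(-b_1(m_n+1)\log(m_n+1))\le\exp(-\tfrac{b_1}{2}m_n\log(n\eps_n^2))$ for large $n$; choosing the implicit constant in $m_n\asymp n\eps_n^2/\log(n\eps_n^2)$ large enough (depending on $D,\sigma,b_1$) makes the right-hand side $\le\exp(-(2D+\sigma^{-2})n\eps_n^2)$.

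The crux is the prior concentration \eqref{eqn:prior_concentration}. Here I would pick $q_n\asymp\eps_n^{-1/\alpha}$, so that the truncation bias obeys $\big\|f_0-\sum_{k<q_n}\beta_{0k}\psi_k\big\|_2\le Q q_n^{-\alpha}\lesssim\eps_n$, and restrict to the event $\{N=q_n\}\cap\{|\beta_k-\beta_{0k}|\le C_1\eps_n/\sqrt{q_n}\ \text{for all }k<q_n\}$. On this event $\|f-f_0\|_2<\eps_n$ by the bias--variance split, while $\|f-f_0\|_\infty\le q_n\cdot C_1\eps_n/\sqrt{q_n}+Q q_n^{-\alpha}=C_1\sqrt{q_n}\,\eps_n+Q q_n^{-\alpha}\to0$ because $\alpha>1/2$; hence these $f$ lie in $B(k_n,\eps_n,\omega)$ for $n$ large (the $L_\infty$ constraint defining $B$ in Lemma~\ref{lemma:evidence_LB} being slack). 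Using $\pi_N(q_n)\ge e^{-b_0 q_n\log q_n}$ from \eqref{eqn:sieve_prior_number_of_terms}, the fact that the exponential power density $g$ is bounded below on compacts (so $\Pi(|\beta_k-\beta_{0k}|\le r)\gtrsim r$ uniformly in $k$, since $|\beta_{0k}|\le Q$), and conditional independence of the coefficients, one gets
\[
\Pi\big(B(k_n,\eps_n,\omega)\big)\ \ge\ e^{-b_0 q_n\log q_n}\big(C_2\eps_n/\sqrt{q_n}\big)^{q_n}\ \ge\ e^{-C_3 q_n\log n}.
\]
Because $t>\alpha/(2\alpha+1)$, one has $q_n\log n\asymp n^{1/(2\alpha+1)}(\log n)^{1-t/\alpha}=o\big(n^{1/(2\alpha+1)}(\log n)^{2t}\big)=o(n\eps_n^2)$, so the right-hand side exceeds $e^{-Dn\eps_n^2}$ for any $D>0$ and large $n$, giving \eqref{eqn:prior_concentration}.

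Having verified all conditions, Theorem~\ref{thm:generic_contraction} yields $\mathbb{E}_0\big[\Pi(\|f-f_0\|_2>M\eps_n\mid\calD_n)\big]\to0$ for sufficiently large $M$, which is the claim. I expect the main obstacle to be exactly the bookkeeping just described: the single constant $D$ of Theorem~\ref{thm:generic_contraction} must simultaneously absorb the prior-mass lower bound, which through the truncation $q_n\asymp\eps_n^{-1/\alpha}$ costs $e^{-C q_n\log n}$, and the sieve-complement bound, which through $m_n\asymp n\eps_n^2/\log(n\eps_n^2)$ requires $m_n\log m_n$ comfortably larger than $n\eps_n^2$. Both margins --- $q_n\log n=o(n\eps_n^2)$ on the prior side and $m_n=o(n\eps_n^2)$ on the entropy side --- are produced precisely by the extra logarithmic factor allowed by the assumption $t>\alpha/(2\alpha+1)$; making the argument robust at $t=\alpha/(2\alpha+1)$ is the point where care is needed, and is the reason the statement is phrased with a strict inequality.
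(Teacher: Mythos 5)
Your proposal is correct and follows what is essentially the paper's intended route: verifying conditions \eqref{eqn:sieve_property}, \eqref{eqn:summability}, \eqref{eqn:prior_mass_on_sieve} and \eqref{eqn:prior_concentration} of Theorem \ref{thm:generic_contraction} with the sieve $\mathrm{span}\{\psi_1,\dots,\psi_{m_n}\}$, $m_n\asymp n\eps_n^2/\log(n\eps_n^2)$, and a prior-concentration event built from $N=q_n\asymp\eps_n^{-1/\alpha}$ with coefficientwise bounds. The verifications (the $\ell_1$--$\ell_2$ bound giving \eqref{eqn:sieve_property} with constant $\delta$, the volumetric entropy bound, the tail bound \eqref{eqn:sieve_prior_number_of_terms}, and the identification of $t>\alpha/(2\alpha+1)$ as exactly what makes $q_n\log n=o(n\eps_n^2)$) are all sound.
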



\subsection{Block prior regression with adaptive and exact rate} 
\label{sub:block_prior_regression_with_adaptive_and_exact_rate}
In the literature of adaptive Bayesian procedure, the minimax-optimal rates of contraction are often obtained with an extra logarithmic factor. It typically requires extra work to obtain the exact minimax-optimal rate. 
Gao and Zhou \citep{gao2016rate} elegantly construct a modified block prior that yields rate-adaptive (\emph{i.e.}, the prior does not depend on the smoothness level) and rate-exact contraction (\emph{i.e.}, the contraction rate does not involve an extra logarithmic factor) for a wide class of nonparametric problems. 
Nevertheless, for nonparametric regression, \cite{gao2016rate} modifies the block prior by conditioning on the space of uniformly bounded functions. Requiring a known upper bound for the unknown $f_0$ when constructing the prior is restrictive since it eliminates the popular Gaussian process priors. Besides the theoretical concern, the block prior itself is also a conditional Gaussian process and such a modification is inconvenient for implementation.
In this section, we address this issue by showing that for nonparametric regression such a modification is not necessary. 

Recall that in Section \ref{sub:sieve_prior_regression_with_adaptive_rate} we have introduced the Fourier basis functions 
$(\psi_k)_{k = 1}^\infty$ in $L_2([0,1])$ with $\psi_1(x) = 1$, $\psi_{2k}(x) = \sqrt{2}\sin\pi kx$, and $\psi_{2k+1}(x) = \sqrt{2}\cos\pi kx$, $k\in\mathbb{N}_+$. 
The true regression function $f_0$ is also assumed to yield a Fourier expansion $f_0(x) = \sum_{k = 1}^\infty \beta_{0k}\psi_k(x)$, and to be in the $\alpha$-Sobolev ball
\[
\calH_\alpha(Q)=\left\{f(x)=\sum_{k=1}^\infty\beta_k\psi_k(x):\sum_{k=1}^\infty k^{2\alpha}\beta_k^2\leq Q\right\}\nonumber
\]
with radius $Q > 0$. 
%
Write $f(x) = \sum_{k = 1}^\infty \beta_k\psi_k(x)$ in terms of the Fourier expansion. Similar to the finite random series prior, the block prior is constructed by assigning a prior distribution on the coefficients $(\beta_k)_{k = 1}^\infty$ as follows. Given a sequence $\bbeta=(\beta_1,\beta_2,\cdots)$ in the squared-summable sequence space $l^2$, define the $\ell$th block $B_\ell$ to be the integer index set $B_\ell = \{k_\ell,\cdots,k_{\ell+1}-1\}$ and $n_\ell = |B_\ell| = k_{\ell+1}-k_\ell$, where $k_\ell = \lceil\mathrm{e}^\ell\rceil$. We use $\bbeta_\ell = (\beta_j:j\in B_\ell)\in\mathbb{R}^{n_\ell}$ to denote the coefficients with indices lying in the $\ell$th block $B_\ell$. The block prior then assigns the following distribution on the coefficients $(\beta_k)_{k=1}^\infty$: 
\begin{align}
\bbeta_\ell\mid A_\ell\sim\mathrm{N}(\zero, A_\ell\eye_{n_\ell}),\quad A_\ell\sim g_\ell,\quad\text{independently for each }\ell\nonumber,
\end{align}
where $(g_\ell)_{\ell=0}^\infty$ is a sequence of densities satisfying the following properties:
\begin{enumerate}
  \item There exists $c_1>0$ such that for any $\ell$ and $t\in[\mathrm{e}^{-\ell^2},\mathrm{e}^{-\ell}]$, 
  \begin{align}
  \label{eqn:block_prior_condition1}
  g_\ell(t)\geq\exp(-c_1\mathrm{e}^\ell).
  \end{align}
  \item There exists $c_2>0$ such that for any $\ell$, 
  \begin{align}
  \label{eqn:block_prior_condition2}
  \int_0^\infty tg_\ell(t)\mathrm{d}t\leq 4\exp(-c_2\ell^2).
  \end{align}
  \item There exists $c_3>0$ such that for any $\ell$,
  \begin{align}
  \label{eqn:block_prior_condition3}
  \int_{\mathrm{e}^{-\ell^2}}^\infty g_\ell(t)\mathrm{d}t\leq \exp(-c_3\mathrm{e}^\ell).
  \end{align}
\end{enumerate}
The existence of a sequence of densities $(g_\ell)_{\ell=0}^\infty$ satisfying \eqref{eqn:block_prior_condition1}, \eqref{eqn:block_prior_condition2}, and \eqref{eqn:block_prior_condition3} is verified in \cite{gao2016rate} (see proposition 2.1 in \cite{gao2016rate}).

Our major improvement for the block prior regression is the following theorem, which shows that the (un-modified) block prior yields rate-exact Bayesian adaptation for nonparametric regression. 
\begin{theorem}\label{thm:block_prior_contraction}
Suppose the true regression function
$f_0\in\calH_\alpha(Q)$ for some $\alpha>1/2$ and $Q>0$, and $f(x)=\sum_{k=1}^\infty\beta_k\psi_k(x)$ is imposed the block prior $\Pi$ as described above.
Then 
\[
\mathbb{E}_0\left[\Pi\left(\|f-f_0\|_2>Mn^{-\alpha/(2\alpha+1)}\mid\calD_n\right)\right]\to 0
\]
for some sufficiently large constant $M>0$.
\end{theorem}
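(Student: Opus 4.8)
The plan is to verify the hypotheses of Theorem~\ref{thm:generic_contraction} with $\eps_n = \leps_n \asymp n^{-\alpha/(2\alpha+1)}$ (no logarithmic inflation) for the un-modified block prior. The key realization is that all three conditions of Theorem~\ref{thm:generic_contraction} have already essentially been checked by Gao and Zhou~\cite{gao2016rate} for the \emph{modified} block prior, and the only genuinely new work is to show that the truncation to a uniformly bounded function space is unnecessary --- i.e., that the sieve can be taken of the form $\calF_{m_n}(\delta)$ satisfying the structural condition \eqref{eqn:sieve_property}, rather than an $L_\infty$-ball. First I would define the sieve. Following~\cite{gao2016rate}, let $L_n$ be the block index with $\mathrm{e}^{L_n} \asymp n^{1/(2\alpha+1)}$, and take
\[
\calF_{m_n}(\delta) = \left\{ f = \sum_{k=1}^\infty \beta_k\psi_k : \sum_{\ell \le L_n} \|\bbeta_\ell\|_2^2 \le R_1, \ \sum_{\ell > L_n} \mathrm{e}^{2\alpha_\ell \ell}\|\bbeta_\ell\|_2^2 \le R_2 \right\}
\]
for suitable constants $R_1, R_2$ and a slowly growing exponent, with $m_n \asymp \mathrm{e}^{L_n} \asymp n^{1/(2\alpha+1)}$, so that $m_n\eps_n^2 \asymp n^{1/(2\alpha+1)} n^{-2\alpha/(2\alpha+1)} = n^{(1-2\alpha)/(2\alpha+1)} \to 0$ since $\alpha > 1/2$. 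The tail decay in the second constraint forces the high-frequency part of any $f \in \calF_{m_n}(\delta)$ to be negligible, while the low-frequency part has at most $m_n$ active coefficients; a Cauchy--Schwarz / Bernstein-type estimate (using $\|\psi_k\|_\infty \le \sqrt 2$) then yields $\|f-f_0\|_\infty^2 \lesssim m_n \|f-f_0\|_2^2 + \delta^2$ with $\delta \asymp$ a fixed constant, which is exactly \eqref{eqn:sieve_property}. This step --- designing a sieve that simultaneously has exponentially small prior complement mass, controlled metric entropy, and satisfies \eqref{eqn:sieve_property} --- is where the contribution over~\cite{gao2016rate} lies, and I expect it to be the main obstacle.

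Next I would verify the three numbered conditions. For \eqref{eqn:prior_mass_on_sieve}, the prior complement mass $\Pi(\calF_{m_n}^c(\delta))$ is bounded using the block structure: violating the low-frequency constraint requires some $A_\ell$ with $\ell \le L_n$ to be atypically large, controlled by \eqref{eqn:block_prior_condition3}; violating the high-frequency constraint is controlled by \eqref{eqn:block_prior_condition2} together with a Gaussian tail bound summed over $\ell > L_n$. Both give a bound of order $\exp(-c\,\mathrm{e}^{L_n}) = \exp(-c'\, n^{1/(2\alpha+1)})$, and since $n\leps_n^2 \asymp n^{1/(2\alpha+1)}$, this is $\lesssim \exp(-(2D+\sigma^{-2})n\leps_n^2)$ once the constants in the sieve are taken large enough --- this is exactly the calculation in Section~5 of~\cite{gao2016rate}. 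For the entropy condition \eqref{eqn:summability}, I would cover $\calS_{nj}$ by covering the (at most $\asymp m_n$-dimensional, effectively) low-frequency block and using the polynomial tail decay for the high-frequency blocks; the resulting $\log N_{nj} \lesssim m_n \log(1/\eps_n) + \ldots$ must be dominated by $Dnj^2\eps_n^2$, and since $m_n\eps_n^2 \to 0$ this indeed makes $\sum_{j\ge M} N_{nj}\exp(-Dnj^2\eps_n^2) \to 0$ for large $M$.

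For the prior concentration \eqref{eqn:prior_concentration}, I would lower-bound $\Pi(B(k_n,\leps_n,\omega))$ where $B(k_n,\leps_n,\omega)$ requires both $\|f-f_0\|_2 < \leps_n$ and the $L_\infty$-to-$L_2$ control with truncation level $k_n$. Here I would take $k_n \asymp \mathrm{e}^{L_n} \asymp n^{1/(2\alpha+1)}$ so that $k_n\leps_n^2 = O(1)$ as required. The $\|f-f_0\|_2 < \leps_n$ event is handled by the standard block-prior small-ball argument from~\cite{gao2016rate}: condition on the event that each $A_\ell$ (for $\ell \le L_n$) lies in $[\mathrm{e}^{-\ell^2}, \mathrm{e}^{-\ell}]$ --- which has probability $\ge \exp(-c_1\mathrm{e}^\ell)$ by \eqref{eqn:block_prior_condition1}, summing to $\exp(-c\,\mathrm{e}^{L_n})$ --- and then the Gaussian coefficients in those blocks are within $\leps_n$ of $\bbeta_0$ with probability bounded below by $\exp(-c\,\mathrm{e}^{L_n}) = \exp(-Dn\leps_n^2)$, while the high-frequency blocks contribute $\lesssim \sum_{\ell > L_n}\mathrm{e}^{-2\alpha\ell}\asymp \mathrm{e}^{-2\alpha L_n} \lesssim \leps_n^2$ in expectation (using $f_0 \in \calH_\alpha(Q)$). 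The additional $L_\infty$-type constraint defining $B$ is then automatically satisfied on this conditioning event by the same reasoning used to establish \eqref{eqn:sieve_property}, possibly after enlarging $\eta'$ and $\omega$. With all three conditions in hand, Theorem~\ref{thm:generic_contraction} delivers $\mathbb{E}_0[\Pi(\|f-f_0\|_2 > M\eps_n \mid \calD_n)] \to 0$ with $\eps_n \asymp n^{-\alpha/(2\alpha+1)}$, which is the claim.
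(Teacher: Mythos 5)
Your overall strategy is exactly the paper's: apply Theorem \ref{thm:generic_contraction} with $\eps_n=\leps_n\asymp n^{-\alpha/(2\alpha+1)}$, design a sieve that satisfies the structural condition \eqref{eqn:sieve_property} (which Gao--Zhou's original sieve need not), and import their small-ball and tail computations for \eqref{eqn:prior_mass_on_sieve}--\eqref{eqn:prior_concentration}. The paper's sieve is different from yours, though: it is (a tail-restricted version of) the Sobolev-type ball $\{f:\sum_k(\beta_k-\beta_{0k})^2k^{2\alpha}\leq Q^2\}$ centered at $f_0$, for which \eqref{eqn:sieve_property} follows from the same Cauchy--Schwarz argument you use and whose entropy is read off from standard Sobolev-ball metric entropy bounds.

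There is one concrete problem with your sieve. The low-frequency constraint $\sum_{\ell\leq L_n}\|\bbeta_\ell\|_2^2\leq R_1$ with a \emph{fixed} radius $R_1$ cannot satisfy \eqref{eqn:prior_mass_on_sieve}: condition \eqref{eqn:block_prior_condition3} only gives $\Pi(A_\ell>\mathrm{e}^{-\ell^2})\leq\mathrm{e}^{-c_3\mathrm{e}^\ell}$, which for $\ell=0,1$ is a constant, and even on $\{A_0\leq 1\}$ the event $\{\|\bbeta_0\|_2^2>R_1\}$ has probability bounded away from zero. Hence $\Pi(\calF_{m_n}(\delta)^c)$ is bounded below by a positive constant rather than being $\lesssim\exp(-c\,n^{1/(2\alpha+1)})$. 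Fortunately this constraint is superfluous: neither \eqref{eqn:sieve_property} nor the entropy bound uses it --- the low-frequency block contributes $\|\cdot\|_\infty^2\lesssim m_n\|f-f_0\|_2^2$ by Cauchy--Schwarz and an entropy of order $m_n\asymp n\eps_n^2$ simply because $\calS_{nj}$ confines that block to an $m_n$-dimensional ball of radius $(j+1)\eps_n$ --- so you should drop it and keep only the weighted high-frequency decay (with exponent $\alpha_\ell\equiv\alpha>1/2$, say), which is controlled by \eqref{eqn:block_prior_condition3} summed over $\ell>L_n$ plus chi-square tails. A second, smaller caveat: in the prior-concentration step, conditioning $A_\ell$ on the \emph{entire} interval $[\mathrm{e}^{-\ell^2},\mathrm{e}^{-\ell}]$ does not yield the small-ball lower bound, since the Gaussian shift cost $\|\bbeta_{0\ell}\|_2^2/A_\ell$ blows up like $\mathrm{e}^{\ell^2}$ near the left endpoint; one must localize $A_\ell$ in a short subinterval around $\|\bbeta_{0\ell}\|_2^2/n_\ell$, which is precisely what \eqref{eqn:block_prior_condition1} is designed to permit and what the Gao--Zhou argument you cite actually does.
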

Rather than using the sieve $\calF_n$ proposed in theorem 2.1 in \cite{gao2016rate}, which does not necessarily satisfy \eqref{eqn:sieve_property}, we construct $\calF_{m_n}(\delta)$ in a slightly different fashion:
\[
\calF_{m_n}(Q)=\left\{f(x)=\sum_{k=1}^\infty\beta_k\psi_k(x):\sum_{k=1}^\infty(\beta_k-\beta_{0k})^2k^{2\alpha}\leq Q^2\right\}
\]
with $m_n\asymp n^{1/(2\alpha+1)}$ and $\delta = Q$. The covering number $N_{nj}$ can be bounded using the metric entropy for Sobolev balls (for example, see lemma 6.4 in \cite{belitser2003adaptive}), and the rest conditions in \ref{thm:generic_contraction} can be verified using similar techniques as in \cite{gao2016rate}.

As discussed in Section 4.2 in \cite{gao2016rate}, the block prior can be easily extended to the wavelet basis functions and wavelet series. The wavelet basis functions are another widely-used class of orthonormal basis functions for $L_2([0,1])$. 
Let $(\psi_{jk})_{j\in\mathbb{N},k\in I_j}$ be an orthonormal basis of compactly supported wavelets for $L_2([0,1])$, 
with $j$ referring to the so-called ``resolution level'', and $k$ to the ``dilation'' (see, for example, Section E.3 in \cite{ghosal2017fundamentals}). We adopt the convention that the index set $I_j$ for the $j$th resolution level runs through $\{0,1,\cdots,2^j-1\}$. 
The exact definition and specific formulas for the wavelet basis are not of great interest to us, and for a complete and thorough review of wavelets from a statistical perspective, we refer to \cite{gine2015mathematical}. 
We shall assume that the wavelet basis $\psi_{jk}$'s are appropriately selected such that for any $f(x) = \sum_{j=0}^\infty\sum_{k\in I_j}\beta_{jk}\psi_{jk}(x)$, the following inequalities hold \citep{cohen2003numerical,cohen1993wavelets,hoffmann2015adaptive}:
\begin{align}
\|f\|_2
\leq 
\sum_{j=0}^\infty\left(\sum_{k\in I_j}\beta_{jk}^2\right)^{1/2}
\quad\text{and}\quad
\|f\|_\infty
\leq 
\sum_{j=0}^\infty2^{j/2}\max_{k\in I_j}|\beta_{jk}|\nonumber.
\end{align}

Write $f$ in terms of the wavelet series expansion $f(x) = \sum_{j=0}^\infty\sum_{k\in I_j}\beta_{jk}\psi_{jk}(x)$. The block prior for the wavelet series is then introduced through the wavelet coefficients $\beta_{jk}$'s as follows:
\begin{align}
\bbeta_j\mid A_j&\sim \mathrm{N}(\zero, A_k\eye_{n_k}),\quad A_j\sim g_j,\quad\text{independently for each }j,\nonumber
\end{align}
where $\bbeta_j=(\beta_{jk}:k\in I_j)$, $n_k = |I_k|=2^j$, and $g_j$ is given by
\begin{align}
g_j(t)&=\left\{
\begin{array}{ll}
\mathrm{e}^{{j^2}\log2}(\mathrm{e}^{-2^j\log 2}-T_j)t+T_j,&\quad 0\leq t\leq \mathrm{e}^{{-j^2}\log2},\\
\mathrm{e}^{-2^j\log2},&\quad \mathrm{e}^{-j^2\log2}<t\leq \mathrm{e}^{-j\log2},\\
0,&\quad t>\mathrm{e}^{-j\log2},
\end{array}
\right.\nonumber\\
T_j&= \exp\left[(1+j^2)\log 2\right]-\exp\left[{(-2^j+j^2-j)\log2}\right]+\mathrm{e}^{-2^j\log 2}.\nonumber
\end{align}
We further assume that $f_0$ is an $\alpha$-Sobolev function. 
For the block prior regression via wavelet series, the rate-exact Bayesian adaptation also holds.
\begin{theorem}\label{thm:block_prior_contraction_wavelet}
Suppose the true regression function $f_0\in\calH_\alpha(Q)$ for some $\alpha>1/2$ and $Q>0$, and $f(x)=\sum_{j=0}^\infty\sum_{k\in I_j}\beta_{jk}\psi_{jk}(x)$ is imposed the block prior for wavelet series $\Pi$ as described above.
Then there exists some sufficiently large constant $M>0$ such that 
\[
\mathbb{E}_0\left[\Pi\left(\|f-f_0\|_2>Mn^{-\alpha/(2\alpha+1)}\mid\calD_n\right)\right]\to 0.
\] 
\end{theorem}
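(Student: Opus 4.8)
The plan is to derive Theorem~\ref{thm:block_prior_contraction_wavelet} from the generic contraction theorem (Theorem~\ref{thm:generic_contraction}) along the same route used for the Fourier-series block prior in Theorem~\ref{thm:block_prior_contraction}, with the Fourier index $k$ replaced throughout by the dyadic scale $2^j$ of the resolution level $j$, and with the two displayed wavelet inequalities $\|f\|_2\le\sum_j(\sum_{k\in I_j}\beta_{jk}^2)^{1/2}$ and $\|f\|_\infty\le\sum_j 2^{j/2}\max_{k\in I_j}|\beta_{jk}|$ supplying the estimates that the elementary bounds on trigonometric polynomials supply there. Concretely, I would fix a resolution cut-off $L_n$ with $2^{L_n}\asymp n^{1/(2\alpha+1)}$ (a large constant multiple, calibrated below), take $\eps_n=\leps_n=n^{-\alpha/(2\alpha+1)}$ and $m_n=k_n=\lceil 2^{L_n}\rceil$, and invoke the $L_2$ prior-concentration form \eqref{eqn:prior_concentration} rather than the $L_\infty$ form of Remark~\ref{remark:infinity_norm_concentration}, since the block prior is a Gaussian scale mixture rather than a Gaussian process. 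One checks immediately that $m_n\eps_n^2\asymp n^{(1-2\alpha)/(2\alpha+1)}\to 0$ (here $\alpha>1/2$ enters) and $k_n\leps_n^2=O(1)$, so the structural hypotheses of Theorem~\ref{thm:generic_contraction} are in force.

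For the sieve I would take, with $\bgamma_j=\bbeta_j-\bbeta_{0j}$,
\[
\calF_{m_n}(Q)=\left\{f=\sum_{j,k}\beta_{jk}\psi_{jk}:\ \sum_{j>L_n}2^{2\alpha j}\|\bgamma_j\|^2\le Q^2\right\},\qquad \delta=Q,
\]
with $Q$ exceeding the Sobolev radius of $f_0$, so that the sieve constrains only the high-resolution part of $f-f_0$ (a variant of the Sobolev-type sieve used in Theorem~\ref{thm:block_prior_contraction}). To verify \eqref{eqn:sieve_property}, split $\|f-f_0\|_\infty\le\sum_{j\le L_n}2^{j/2}\|\bgamma_j\|+\sum_{j>L_n}2^{j/2}\|\bgamma_j\|$: Cauchy--Schwarz applied to the first sum gives $\le 2^{(L_n+1)/2}\|f-f_0\|_2$ (using $\sum_j\|\bgamma_j\|^2=\|f-f_0\|_2^2$), and applied to the second it gives $\le(\sum_{j>L_n}2^{(1-2\alpha)j})^{1/2}Q\lesssim 2^{(1/2-\alpha)L_n}Q\le Q$ because $\alpha>1/2$; squaring yields $\|f-f_0\|_\infty^2\lesssim 2^{L_n}\|f-f_0\|_2^2+Q^2\asymp m_n\|f-f_0\|_2^2+\delta^2$. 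The covering numbers $N_{nj}=\calN(\xi j\eps_n,\calS_{nj},\|\cdot\|_2)$ then factor over the two frequency ranges: the low-resolution block is an $\ell_2$-ball of radius $\le(j+1)\eps_n$ in dimension $\asymp 2^{L_n}\asymp n\eps_n^2$, of log-covering number $\lesssim n\eps_n^2$; the high-resolution block is a Sobolev ball of radius $Q$ cut by an $L_2$-ball of radius $(j+1)\eps_n$, of log-covering number $\lesssim(Q/(j\eps_n))^{1/\alpha}\asymp j^{-1/\alpha}n\eps_n^2$ by the metric entropy of Sobolev balls. Hence $\log N_{nj}\lesssim n\eps_n^2$ uniformly in $j\ge1$, so taking $D$ large makes $\log N_{nj}-Dnj^2\eps_n^2\lesssim-\tfrac{D}{2}j^2 n\eps_n^2$ and \eqref{eqn:summability} follows.

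For the two prior-mass conditions the block structure must be exploited. In \eqref{eqn:prior_mass_on_sieve}, after subtracting the negligible tail $\sum_{j>L_n}2^{2\alpha j}\|\bbeta_{0j}\|^2$ of $f_0$, one is left to bound $\Pi(\sum_{j>L_n}2^{2\alpha j}\|\bbeta_j\|^2>c)$; since every block in play has $j>L_n$, each scale $A_j$ lies in the compact support $[0,2^{-j}]$ of $g_j$, lies in $[0,2^{-j^2}]$ off an event of probability $\le 2^{-j-2^j}$, and has $\mathbb{E}A_j\lesssim 2^{-j^2}$ by the shape of $g_j$; a union bound over $j>L_n$ together with the Laurent--Massart tail inequality for the $\chi^2_{2^j}$ variables $\|\bbeta_j\|^2/A_j$ then yields $\Pi(\calF_{m_n}^c(Q))\lesssim\exp(-c'2^{L_n})\lesssim\exp(-c''n^{1/(2\alpha+1)})$, with $c''$ made as large as needed by enlarging the constant in $2^{L_n}\asymp n^{1/(2\alpha+1)}$. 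For \eqref{eqn:prior_concentration} one takes the set of $f$ whose first $L_n$ blocks satisfy $\|\bgamma_j\|$ no larger than a schedule whose $L_2$-sum is $O(\leps_n)$ and whose higher blocks take the prior-typical (exponentially small) values; the truncation bias $\sum_{j>L_n}\|\bbeta_{0j}\|^2\lesssim 2^{-2\alpha L_n}$ and the same splitting give $\|f-f_0\|_\infty^2\lesssim 2^{L_n}\|f-f_0\|_2^2+\omega^2=k_n\|f-f_0\|_2^2+\omega^2$, so this set sits inside $B(k_n,\leps_n,\omega)$, and its prior probability is bounded below by $\exp(-Cn^{1/(2\alpha+1)})$ by combining the density lower bound \eqref{eqn:block_prior_condition1} for the $A_j$ with block-by-block Gaussian small-ball estimates, exactly as in \cite{gao2016rate}.

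The step I expect to be the real obstacle is the simultaneous calibration of the constants across \eqref{eqn:summability}, \eqref{eqn:prior_mass_on_sieve} and \eqref{eqn:prior_concentration}: the entropy forces $D$ above a threshold governed by $Q$ and by the low-resolution dimension $\asymp 2^{L_n}$, \eqref{eqn:prior_mass_on_sieve} then forces $2^{L_n}$ to be a sufficiently large multiple of $n^{1/(2\alpha+1)}$, and that inflates the small-ball cost in \eqref{eqn:prior_concentration}; one must verify that the constant in $2^{L_n}\asymp n^{1/(2\alpha+1)}$ can be chosen so that the decay rate of the high-resolution sieve complement outruns both the entropy threshold and the per-dimension small-ball cost at once, a bookkeeping carried out in the spirit of Gao and Zhou's accounting but now without the crutch of their conditioning on a known bounded ball. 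It is the structural sieve property \eqref{eqn:sieve_property} — available for wavelets only through the bound $\|f\|_\infty\le\sum_j 2^{j/2}\max_k|\beta_{jk}|$ — that lets Theorem~\ref{thm:generic_contraction} absorb the unbounded support of the block prior at all, so the construction of $\calF_{m_n}(Q)$ must be delicate enough to satisfy \eqref{eqn:sieve_property} while still being light enough for \eqref{eqn:prior_mass_on_sieve}. Once these are reconciled, Theorem~\ref{thm:generic_contraction} delivers the contraction rate $n^{-\alpha/(2\alpha+1)}$ with neither a logarithmic factor nor a truncation.
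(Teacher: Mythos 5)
Your plan is correct and follows essentially the same route as the paper: invoke Theorem \ref{thm:generic_contraction} with $\eps_n=\leps_n=n^{-\alpha/(2\alpha+1)}$ and $m_n=k_n\asymp 2^{L_n}\asymp n^{1/(2\alpha+1)}$, verify the structural property \eqref{eqn:sieve_property} for a Sobolev-type sieve via the two displayed wavelet inequalities (Cauchy--Schwarz on the low and high resolution ranges separately), bound $N_{nj}$ by the metric entropy of Sobolev balls plus a volumetric bound on the finite-dimensional low-resolution block, and establish \eqref{eqn:prior_mass_on_sieve} and \eqref{eqn:prior_concentration} by the block-by-block computations of Gao and Zhou using the compact support and tail of $g_j$ together with $\chi^2_{2^j}$ concentration. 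Your refinement of restricting the Sobolev constraint to resolutions $j>L_n$ is exactly what makes the sieve-complement probability exponentially small, and the constant calibration you flag resolves as you anticipate (note that \eqref{eqn:summability} can also be absorbed by taking $M$ large rather than $D$ large, which decouples it from \eqref{eqn:prior_mass_on_sieve}).
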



\subsection{Beyond Fourier series: Gaussian spline regression} 
\label{sub:conditionally_gaussian_spline_regression}
The previous two examples show that the proposed framework can be applied to   priors through Fourier series expansions. The framework also works for prior distributions that are beyond Fourier basis, and we provide an example in this subsection. 

  Spline functions or splines are defined piecewise by polynomials
 on subintervals $[t_0, t_1), [t_1, t_2),\ldots,[t_{K - 1}, t_K]$ that are also globally smooth on the entire domain $[t_0, t_K]$, where $K$ is the number of subintervals. Without loss of generality, we may further require that $t_0 = 0$ and $t_K = 1$. A spline function is said to be of order $q$ for some positive integer $q$, if the involved polynomials are of degrees at most $q - 1$. Given the order $q$ and the number of subintervals $K$, the space of spline functions forms a linear space with dimension $m = q + K - 1$, and a basis for this linear space is also a set of spline functions, referred to as \emph{B-splines} and  denoted as $(B_k)_{k = 1}^{m}$. We refer the readers to \cite{de1978practical} and \cite{schumaker2007spline} for a systematic introduction of the spline functions. We present the following facts regarding the approximation property of splines, and the norm equivalence between spline functions and the coefficients of the corresponding B-splines. These results can be found in \cite{de1978practical}.

%

\begin{lemma}
\label{lemma:splines_lemma}
Let $f_0$ be an $\alpha$-H\"older function with $\alpha > 0$, and $q\geq \alpha$. Then there exists a constant $C$ depending on $f_0$, $q$, and $\alpha$, and $(\beta_{0k})_{k = 1}^m\subset\mathbb{R}$ with $m = q + K - 1$, such that
\[
\left\|\sum_{k = 1}^m\theta_kB_k(x) - f_0(x)\right\|_\infty\leq Cm^{-\alpha}.
\]
Furthermore, for any $\beta_1,\ldots,\beta_m\in\mathbb{R}$, 
\[
\max_{1\leq k\leq m}|\beta_k|
\asymp
\left\|\sum_{k = 1}^m\beta_kB_k(x)\right\|_\infty,
\quad
\left(\sum_{k = 1}^m\beta_k^2\right)^{1/2}\asymp
\sqrt{m}\left\|\sum_{k = 1}^m\beta_kB_k(x)\right\|_2.
\]
\end{lemma}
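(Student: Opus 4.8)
The plan is to recognise Lemma \ref{lemma:splines_lemma} as an assembly of three classical facts from spline theory \cite{de1978practical,schumaker2007spline}, each of which I would reduce to a short argument on the (quasi-)uniform knot sequence underlying the B-spline basis, of mesh size $h\asymp 1/K\asymp 1/m$.

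\textbf{Approximation bound.} I would introduce a quasi-interpolation operator $Q_m$ from $L_\infty([0,1])$ onto the order-$q$ spline space $\mathrm{span}\{B_1,\ldots,B_m\}$ — for instance the de~Boor--Fix quasi-interpolant or Schoenberg's variation-diminishing spline operator — enjoying two properties: it is bounded on $L_\infty$ by a constant depending only on $q$ (an immediate consequence of $B_k\ge0$, $\sum_kB_k\equiv1$, and the fact that each $B_k$ is supported on $q$ consecutive knot intervals), and it reproduces every polynomial of degree at most $q-1$. Writing $\alpha=\ell+\gamma$ with $\gamma\in(0,1]$ and $\ell=\lceil\alpha\rceil-1\le q-1$ (using $q\ge\alpha$), the value $Q_mf_0(x)$ at a fixed $x$ depends only on the restriction of $f_0$ to a union $U_x$ of $O(1)$ neighbouring knot intervals; comparing $f_0$ on $U_x$ with its degree-$\ell$ Taylor polynomial $P$ about a point of $U_x$ and invoking polynomial reproduction, $|Q_mf_0(x)-f_0(x)|\le|Q_m(f_0-P)(x)|+|P(x)-f_0(x)|\lesssim\sup_{U_x}|f_0-P|\lesssim h^{\ell+\gamma}$, where the last step uses the $\gamma$-Hölder continuity of $f_0^{(\ell)}$. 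Since $h\asymp m^{-1}$ this gives $\|Q_mf_0-f_0\|_\infty\le Cm^{-\alpha}$ with $C=C(f_0,q,\alpha)$, and the coefficients of $Q_mf_0$ serve as the desired $(\beta_{0k})_{k=1}^m$.

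\textbf{Norm equivalences.} The bound $\|\sum_k\beta_kB_k\|_\infty\le\max_k|\beta_k|$ is again immediate from $B_k\ge0$ and $\sum_kB_k\equiv1$, and the reverse inequality $\max_k|\beta_k|\le D_q\|\sum_k\beta_kB_k\|_\infty$ with $D_q$ independent of the knots is de~Boor's B-spline stability theorem: it follows by exhibiting, for each $k$, a dual linear functional $\lambda_k$ acting on a single knot interval with $\lambda_kB_j=\delta_{jk}$ and operator norm at most $D_q$, whence $|\beta_k|=|\lambda_k(\sum_j\beta_jB_j)|\le D_q\|\sum_j\beta_jB_j\|_\infty$; on the uniform mesh relevant here one may instead use translation-invariance plus non-degeneracy of a single local Gram matrix. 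For the $L_2$ statement, the Gram matrix $G=(\langle B_j,B_k\rangle_{L_2})_{j,k=1}^m$ is symmetric, banded with bandwidth $q$, has entries of order $h$, and on a (quasi-)uniform mesh has all eigenvalues comparable to $h$ (derivable from the $L_\infty$ stability above together with the banded structure, or found directly in \cite{de1978practical}); hence $\|\sum_k\beta_kB_k\|_2^2=\beta^{\top}G\beta\asymp h\sum_k\beta_k^2\asymp m^{-1}\sum_k\beta_k^2$, which is exactly $(\sum_k\beta_k^2)^{1/2}\asymp\sqrt{m}\,\|\sum_k\beta_kB_k\|_2$.

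The step I expect to be the main obstacle is the knot-independence of the constants in the two stability statements (de~Boor's $L_\infty$ condition-number bound and the spectral bounds on $G$): for an arbitrary knot sequence these are genuinely delicate, whereas for the uniform knots used in the Gaussian spline prior they reduce to scaling together with a one-dimensional compactness argument. Since everything above is standard and fully documented in \cite{de1978practical,schumaker2007spline}, in the paper I would simply state the lemma with these references rather than reproduce the proofs.
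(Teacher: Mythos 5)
Your proposal is correct and matches the paper's treatment: the paper gives no proof of this lemma, stating only that "these results can be found in \cite{de1978practical}," and your quasi-interpolant argument for the approximation bound together with de Boor's stability theorem and the banded Gram-matrix spectral bounds for the two norm equivalences are precisely the standard arguments from that reference. Your closing remark — that one would simply cite \cite{de1978practical,schumaker2007spline} rather than reproduce these proofs — is exactly what the authors do.
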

Assume that the true regression function $f_0$ is an $\alpha$-H\"older function with $\alpha > 1/2$. We now present the Gaussian spline prior, which simplifies the version presented in \cite{de2012adaptive}.  Assume that $f$ is a spline function of order $q \geq \alpha$ on $[0, 1]$, and the number of subintervals is $K$. Then we write 
$f$ in terms of a linear combination of the B-splines $f(x) = \sum_{k = 1}^m\beta_kB_k(x)$. The Gaussian spline prior assigns a prior distribution on $f$ by letting the coefficients $\beta_1,\ldots,\beta_m$ independently follow the standard normal distribution $\mathrm{N}(0, 1)$. Allowing $m$ grows moderately with the sample size $n$, we show in the following theorem that the posterior contraction rate with respect to $\|\cdot\|_2$ is minimax-optimal up to a logarithmic factor. 
\begin{theorem}\label{thm:Gaussian_spline_contraction}
Suppose the true regression function
$f_0\in\mathfrak{C}^\alpha(Q)$ for some $\alpha>1/2$ and $Q>0$, and $f(x)$ is assigned the  Gaussian spline prior $\Pi$ as described above with order $q\geq \alpha$ and the number of subintervals $K$.
If $m = q + K - 1= n^{1/(2\alpha + 1)}$, then there exists some sufficiently large constant $M>0$ such that 
\[
\mathbb{E}_0\left[\Pi\left(\|f-f_0\|_2>Mn^{-\alpha/(2\alpha+1)}(\log n)^{1/2}\mid\calD_n\right)\right]\to 0.
\] 
\end{theorem}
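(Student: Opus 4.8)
The plan is to instantiate Theorem~\ref{thm:generic_contraction} with $\eps_n=\leps_n=M_0\,n^{-\alpha/(2\alpha+1)}(\log n)^{1/2}$ for a sufficiently large constant $M_0$, with $m_n=\lceil n^{1/(2\alpha+1)}\rceil$ equal to the dimension of the spline space carrying the prior, $k_n=m_n$, $\delta=\omega=1$, and with the sieve $\calF_{m_n}(\delta)$ taken to be the \emph{entire} $m_n$-dimensional spline space $\{\sum_{k=1}^{m_n}\beta_kB_k\}$. The point of this choice is that, by the two-sided norm equivalence in Lemma~\ref{lemma:splines_lemma}, the structural assumption \eqref{eqn:sieve_property} already holds on the whole spline space, so---unlike a treatment that conditions on a bounded set---no truncation of the Gaussian prior is needed, and the prior-mass-on-sieve condition \eqref{eqn:prior_mass_on_sieve} becomes vacuous since $\Pi(\calF_{m_n}^c(\delta))=0$.

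First I would verify \eqref{eqn:sieve_property}. Writing $\tilde f_0=\sum_{k=1}^{m_n}\beta_{0k}B_k$ for the spline approximant of $f_0$ furnished by Lemma~\ref{lemma:splines_lemma}, so that $\|\tilde f_0-f_0\|_\infty\le Cm_n^{-\alpha}$, the triangle inequality combined with $\|f-\tilde f_0\|_\infty\asymp\max_k|\beta_k-\beta_{0k}|\le\|\beta-\beta_0\|_2\asymp\sqrt{m_n}\,\|f-\tilde f_0\|_2$ and $\|f-\tilde f_0\|_2\le\|f-f_0\|_2+Cm_n^{-\alpha}$ yields $\|f-f_0\|_\infty^2\le\eta\,(m_n\|f-f_0\|_2^2+m_n^{1-2\alpha})$ for a constant $\eta$; since $\alpha>1/2$, $m_n^{1-2\alpha}\le1=\delta^2$ for all large $n$, which gives \eqref{eqn:sieve_property}. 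The same inequality shows that every spline $f$ with $\|f-f_0\|_2<\leps_n$ automatically satisfies the $L_\infty$-constraint in $B(k_n,\leps_n,\omega)$ (with $\eta'=\eta$, $\omega=1$), so $B(k_n,\leps_n,\omega)\supseteq\{\,f=\sum_{k=1}^{m_n}\beta_kB_k:\|f-f_0\|_2<\leps_n\,\}$. The sequence bookkeeping is then immediate and uses $\alpha>1/2$: $n\eps_n^2\asymp m_n\log n\to\infty$, $m_n\eps_n^2\asymp n^{(1-2\alpha)/(2\alpha+1)}\log n\to0$, and $k_n\leps_n^2=m_n\leps_n^2=o(1)$.

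Next I would check the entropy condition \eqref{eqn:summability}. The annulus $\calS_{nj}$ lies in the $m_n$-dimensional spline space and has $\|\cdot\|_2$-diameter at most $2(j+1)\eps_n$, so the norm equivalence of Lemma~\ref{lemma:splines_lemma} together with a volumetric covering bound in $\mathbb{R}^{m_n}$ gives $N_{nj}\le C_1^{m_n}$ for all $j\ge1$, with $C_1$ depending only on $\xi$ and the spline-norm constants. Consequently $\sum_{j\ge M}N_{nj}\exp(-Dnj^2\eps_n^2)\le C_1^{m_n}\exp(-DM^2n\eps_n^2)(1+o(1))\to0$, because $n\eps_n^2\asymp m_n\log n$ dominates $m_n\log C_1$. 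For the prior-concentration condition \eqref{eqn:prior_concentration}, by the previous paragraph it suffices to lower bound $\Pi(\|f-f_0\|_2<\leps_n)$. Since $m_n^{-\alpha}\le n^{-\alpha/(2\alpha+1)}\le\leps_n/M_0$, taking $M_0$ large makes $\{\|\beta-\beta_0\|_\infty<c\leps_n\}\subseteq\{\|f-f_0\|_2<\leps_n\}$ for a fixed small $c>0$; the coefficients $\beta_{0k}$ are uniformly bounded (again by Lemma~\ref{lemma:splines_lemma} and $\|f_0\|_\infty<\infty$), so with $Z\sim\mathrm{N}(0,1)$,
\[
\Pi(\|\beta-\beta_0\|_\infty<c\leps_n)=\prod_{k=1}^{m_n}\mathbb{P}\big(|Z-\beta_{0k}|<c\leps_n\big)\ge(c'\leps_n)^{m_n}=\exp\!\big(-m_n\log(1/\leps_n)+m_n\log c'\big),
\]
and $m_n\log(1/\leps_n)\asymp\tfrac{\alpha}{2\alpha+1}m_n\log n\asymp n\leps_n^2$ gives $\Pi(B(k_n,\leps_n,\omega))\ge\exp(-Dn\leps_n^2)$ for a suitable constant $D$. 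Since \eqref{eqn:summability} holds for every $D>0$ thanks to the $\log n$, and \eqref{eqn:prior_mass_on_sieve} holds trivially for every $D>0$, all three requirements can be met with a single $D$, and Theorem~\ref{thm:generic_contraction} delivers $\mathbb{E}_0[\Pi(\|f-f_0\|_2>M\eps_n\mid\calD_n)]\to0$, i.e., the assertion with constant $MM_0$.

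The main obstacle is the prior-concentration estimate: the bound $\Pi(B(k_n,\leps_n,\omega))\ge\exp(-Dn\leps_n^2)$ with a \emph{constant} $D$ cannot be met by the exact minimax rate, because the $m_n$-fold product of one-dimensional small-ball probabilities only produces $\exp(-c\,m_n\log(1/\leps_n))$; this is precisely what forces the $(\log n)^{1/2}$ inflation of the rate so that $n\leps_n^2\asymp m_n\log n$ matches the exponent---and the same inflation is what makes the $C_1^{m_n}$ entropy factor in \eqref{eqn:summability} negligible. Apart from this balancing, the only genuinely model-dependent input is the pair of spline facts in Lemma~\ref{lemma:splines_lemma} (approximation order and two-sided norm equivalence); everything else is bookkeeping inside the general framework.
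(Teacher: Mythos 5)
Your proposal is correct and follows essentially the route the paper intends for this application: take the sieve to be the full $m_n$-dimensional spline space (so \eqref{eqn:prior_mass_on_sieve} is vacuous), use the norm equivalence and approximation bound of Lemma \ref{lemma:splines_lemma} to verify \eqref{eqn:sieve_property} and to reduce the entropy and prior-concentration conditions to finite-dimensional Euclidean estimates, and absorb the $m_n\log(1/\leps_n)$ small-ball exponent into $n\leps_n^2$ via the $(\log n)^{1/2}$ inflation. Your closing diagnosis of where the logarithmic factor comes from matches the mechanism in the paper's argument.
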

We briefly compare the result of Theorem \ref{thm:Gaussian_spline_contraction} with that in \cite{de2012adaptive}, which considered the  empirical $L_2$-distance and obtained the minimax-optimal rate up to a logarithmic factor in the fixed-design regression problem.  In contrast, we put the prior models in the context of the random-design regression and use the integrated $L_2$-distance, which can be viewed as a complement of the contraction result  presented in \cite{de2012adaptive}.

%


\section{Extensions} 
\label{sec:extensions}

     
\subsection{Extension to the fixed design regression} 
\label{sub:gaussian_random_series_regression_with_exact_rate} 
So far, the design points $(\bx_i)_{i=1}^n$ in this paper  are assumed to be randomly sampled from $[0,1]^p$. This is referred to as the random-design regression problem. There are, however, many cases where the design points $(\bx_i)_{i=1}^n$ are fixed and can be controlled. One of the examples is the design and analysis of computer experiments \cite{currin1991bayesian,sacks1989design}. To emulate a computer model, the design points are typically manipulated so that they are reasonably spread. In some physical experiments the design points can also be required to be fixed \cite{tuo2016theoretical}. 
In this subsection we show that by
slightly extending the framework in Section \ref{sec:the_framework_and_main_results},
the integrated $L_2$-distance contraction is also obtainable under similar conditions when the design points are reasonably selected.

Suppose that the design points $(x_i)_{i=1}^n\subset[0,1]$ are one-dimensional and fixed. 
Intuitively, the design points need to be relatively ``spread'' so that the global behavior of the true signal $f_0$ can be recovered as much as possible. Formally, we require that the design points satisfy 
\begin{align}\label{eqn:fixed_design_condition}
\sup_{x\in[0,1]}\left|\frac{1}{n}\sum_{i=1}^n\mathbbm{1}(x_i\leq x)-x\right|=O\left(\frac{1}{n}\right).
\end{align}
A simple example of such design  is the univariate equidistance design, \emph{i.e.}, $x_i = (i-1/2)/n$ (see, for example, \cite{yoo2016supremum,yoo2017adaptive}). 

Now we extend the framework in Section \ref{sec:the_framework_and_main_results} to the (one-dimensional) fixed-design regression problem. Specifically, this amounts to modifying the requirement for the sieve in \eqref{eqn:sieve_property}:
\begin{align}\label{eqn:sieve_fixed_design}
f,f_1&\in\calF_{m}(\delta)\Rightarrow\left|\mathbb{P}_n(f - f_0)^2 - \|f - f_0\|_2^2\right|
\leq \eta\left(\frac{m}{n}\|f - f_0\|_2^2 + \frac{\delta}{\sqrt{n}}\|f - f_0\|_2\right)
,\nonumber\\
\text{and}&
\left|\mathbb{P}_n(f - f_1)^2 - \|f - f_1\|_2^2\right|
\leq \eta\left(\frac{m}{n}\|f - f_1\|_2^2 + \frac{\delta}{\sqrt{n}}\|f - f_1\|_2\right).
\end{align}

With the above ingredients, we   present the following modification of Theorem \ref{thm:generic_contraction} for the fixed-design regression, which might be of independent interest as well. 
\begin{theorem}[Generic Contraction, Fixed-design]\label{thm:generic_contraction_fixed_design}
Suppose the design points $(x_i)_{i=1}^n$ are fixed and satisfy \eqref{eqn:fixed_design_condition}. 
Let $(\eps_n)_{n=1}^\infty$ and $(\leps_n)_{n=1}^\infty$ be sequences such that $\min(n\eps_n^2,n\leps_n^2)\to\infty$ as $n\to\infty$ with $0\leq\leps_n\leq\eps_n\to 0$. Let the sieves $(\calF_{m_n}(\delta))_{n=1}^\infty$ satisfy \eqref{eqn:sieve_fixed_design} for some constant $\delta>0$, where $m_n\to\infty$ and $m_n/n\to 0$. Suppose the conditions \eqref{eqn:summability}, \eqref{eqn:prior_mass_on_sieve}, and $\Pi(\|f-f_0\|_\infty<\leps_n)\geq\exp(-Dn\leps_n^2)$ hold for some constant $D>0$ and sufficiently large $n$ and $M$. 
Then 
\[
\mathbb{E}_0\left[\Pi(\|f-f_0\|_2>M\eps_n\mid\calD_n)\right]\to 0.
\]
\end{theorem}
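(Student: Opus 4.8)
\emph{Proof strategy.} The argument runs parallel to the proof of Theorem~\ref{thm:generic_contraction}, and in particular to its $\|\cdot\|_\infty$-concentration variant described in Remark~\ref{remark:infinity_norm_concentration}, with the random-design testing ingredients (Lemmas~\ref{lemma:local_testability} and~\ref{lemma:global_testability}) replaced by fixed-design analogues built from \eqref{eqn:sieve_fixed_design} together with Gaussian tail bounds. Three structural facts survive the passage to fixed designs. First, for the product-Gaussian likelihood $\mathbb{E}_0\prod_{i=1}^n[p_f(x_i,y_i)/p_0(x_i,y_i)]=\prod_{i=1}^n\int\phi_\sigma(y-f(x_i))\,\mathrm{d}y=1$, so by Fubini $\mathbb{E}_0\int_{\calF_{m_n}^c(\delta)}\prod_i[p_f/p_0]\,\Pi(\mathrm{d}f)=\Pi(\calF_{m_n}^c(\delta))$. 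Second, Lemma~\ref{lemma:evidence_LB_infinity_norm} applies verbatim, since its proof uses only the noise randomness together with the deterministic inequality $\mathbb{P}_n(f-f_0)^2\le\|f-f_0\|_\infty^2$; combined with the hypothesis $\Pi(\|f-f_0\|_\infty<\leps_n)\ge\exp(-Dn\leps_n^2)$ it yields, on an event of $\mathbb{P}_0$-probability tending to one, $\int\prod_i[p_f/p_0]\,\Pi(\mathrm{d}f)\ge\exp[-(D+C+1/\sigma^2)n\leps_n^2]$ for any preassigned $C>0$. Third, the change-of-measure identity $\mathbb{E}_0[(1-\phi_n)\prod_i(p_f/p_0)]=\mathbb{E}_f(1-\phi_n)$ is again just the likelihood-ratio identity for product Gaussians.

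The crux, and the step I expect to be the main obstacle, is the fixed-design local testing lemma: for $f_1\in\calF_{m_n}(\delta)$ with $\sqrt n\|f_1-f_0\|_2$ large (automatic on the shells entering the global test because $n\eps_n^2\to\infty$) there is a test $\phi_n$ with $\mathbb{E}_0\phi_n\le\exp(-Cn\|f_1-f_0\|_2^2)$ and $\sup\{\mathbb{E}_f(1-\phi_n):f\in\calF_{m_n}(\delta),\ \|f-f_1\|_2\le\xi\|f_0-f_1\|_2\}\le\exp(-Cn\|f_1-f_0\|_2^2)$. I would use the Neyman--Pearson-type statistic $T=\sum_{i=1}^n(y_i-f_0(x_i))(f_1(x_i)-f_0(x_i))$ with threshold $\tfrac12\sum_i(f_1(x_i)-f_0(x_i))^2=\tfrac12 n\mathbb{P}_n(f_1-f_0)^2$. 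Under $\mathbb{P}_0$, $T\sim\mathrm{N}(0,\sigma^2 n\mathbb{P}_n(f_1-f_0)^2)$, so $\mathbb{E}_0\phi_n\le\exp[-n\mathbb{P}_n(f_1-f_0)^2/(8\sigma^2)]$; under $\mathbb{P}_f$, $T$ is Gaussian with the same variance and mean $n\mathbb{P}_n(f_1-f_0)^2-\sum_i(f_1-f)(x_i)(f_1-f_0)(x_i)\ge n\mathbb{P}_n(f_1-f_0)^2-\sqrt{n\mathbb{P}_n(f_1-f)^2}\sqrt{n\mathbb{P}_n(f_1-f_0)^2}$ by Cauchy--Schwarz. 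This is where \eqref{eqn:sieve_fixed_design} enters decisively: it sandwiches $\mathbb{P}_n(f_1-f_0)^2$ and $\mathbb{P}_n(f_1-f)^2$ between their integrated-$L_2$ counterparts up to errors $\eta(\tfrac{m_n}{n}\|\cdot\|_2^2+\tfrac{\delta}{\sqrt n}\|\cdot\|_2)$. Since $m_n/n\to0$ and $\sqrt n\|f_1-f_0\|_2\ge M\sqrt n\eps_n\to\infty$ \emph{uniformly} over the relevant shells, for $n$ beyond a threshold $n_0$ that depends on $M,\delta,\eta$ but not on the shell index these errors are a small fraction of $\|f_1-f_0\|_2^2$, so $\mathbb{P}_n(f_1-f_0)^2\asymp\|f_1-f_0\|_2^2$ and $\mathbb{P}_n(f_1-f)^2\lesssim\xi^2\mathbb{P}_n(f_1-f_0)^2$; choosing $\xi$ small enough makes the $\mathbb{P}_f$-mean exceed the threshold by a fixed multiple of $n\mathbb{P}_n(f_1-f_0)^2$, and a Gaussian tail bound gives the type-II estimate with $C$ depending only on $\sigma,\eta,\xi$. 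The delicate point is exactly this uniformity in the shell index, which is what forces \eqref{eqn:sieve_fixed_design} to be phrased with the separate scales $m_n/n$ and $\delta/\sqrt n$ rather than a single one.

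The fixed-design global testing lemma then follows the proof of Lemma~\ref{lemma:global_testability}: cover each shell $\calS_{nj}=\{f\in\calF_{m_n}(\delta):j\eps_n<\|f-f_0\|_2\le(j+1)\eps_n\}$ by $N_{nj}$ balls of $\|\cdot\|_2$-radius $\xi j\eps_n$, take the maximum of the local tests over all $j\ge M$ and all centers, and obtain $\mathbb{E}_0\phi_n\le\sum_{j\ge M}N_{nj}\exp(-Cnj^2\eps_n^2)$ together with $\sup\{\mathbb{E}_f(1-\phi_n):f\in\calF_{m_n}(\delta),\|f-f_0\|_2>M\eps_n\}\le\exp(-CM^2 n\eps_n^2)$. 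Finally one assembles the pieces exactly as in the proof of Theorem~\ref{thm:generic_contraction}: split $\Pi(\|f-f_0\|_2>M\eps_n\mid\calD_n)$ into its restrictions to $\calF_{m_n}^c(\delta)$ and to $\calF_{m_n}(\delta)$. On the high-probability denominator event, the expectation of the first restriction is at most $\Pi(\calF_{m_n}^c(\delta))\,\Pi(\|f-f_0\|_\infty<\leps_n)^{-1}\exp[(C+1/\sigma^2)n\leps_n^2]\lesssim\exp[(C-D)n\leps_n^2]\to0$ by \eqref{eqn:prior_mass_on_sieve} once $C<D$; the expectation of the second is at most $\mathbb{E}_0\phi_n+\Pi(\|f-f_0\|_\infty<\leps_n)^{-1}\exp[(C+1/\sigma^2)n\leps_n^2]\sup_{f\in\calF_{m_n}(\delta),\,\|f-f_0\|_2>M\eps_n}\mathbb{E}_f(1-\phi_n)$, where the first summand vanishes by \eqref{eqn:summability} and the second is at most $\exp[(D+C+1/\sigma^2-CM^2)n\eps_n^2]\to0$ for $M$ large (using $\leps_n\le\eps_n$). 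Matching the generic constants $C$ in the testing lemma with $D$ in \eqref{eqn:summability}--\eqref{eqn:prior_mass_on_sieve} is carried out exactly as in Theorem~\ref{thm:generic_contraction}. The design-discrepancy hypothesis \eqref{eqn:fixed_design_condition} is not invoked in this abstract argument; it is the condition under which \eqref{eqn:sieve_fixed_design} is verified for the concrete sieves used in the applications.
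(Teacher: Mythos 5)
Your proposal is correct and follows essentially the same route as the paper's: replace the Bernstein-based random-design testing lemmas by a deterministic Neyman--Pearson test whose type-I and type-II bounds follow from pure Gaussian tails once \eqref{eqn:sieve_fixed_design} equates the empirical and integrated $L_2$-norms uniformly over the shells (using $m_n/n\to0$ and $\sqrt{n}\,j\eps_n\to\infty$), reuse Lemma \ref{lemma:evidence_LB_infinity_norm} for the denominator (its proof is design-free since it only needs $\mathbb{P}_n(f-f_0)^2\leq\|f-f_0\|_\infty^2$), and assemble exactly as in Theorem \ref{thm:generic_contraction}. Your observations that the fixed-design type-II bound loses the extra Bernstein term and that \eqref{eqn:fixed_design_condition} enters only through verifying \eqref{eqn:sieve_fixed_design} for concrete sieves are both accurate.
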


As a sample application of the above framework, we consider one of the most popular Gaussian processes $\mathrm{GP}(0,K)$ with the covariance function $K(x,x') = \exp\left[-(x-x')^2\right]$ of the squared-exponential form  \cite{rasmussen2006gaussian}. We show  that optimal rates of contraction with respect to the integrated $L_2$-distance is also attainable when the design points are reasonably selected, in contrast to most Bayesian literatures that obtain rates of contraction with respect to the empirical $L_2$-distance. 


Given constants $c,Q>0$, we assume that the underlying true regression function $f_0$ lies in the following function class
\begin{align}
\calA_c(Q)=\left\{f(x)=\sum_{k=1}^\infty \beta_k\psi_k(x):
\sum_{k=1}^\infty \beta_k^2\exp\left(\frac{k^2}{c}\right)\leq Q^2
\right\}\nonumber.
\end{align}
The function class $\calA_c(Q)$ is closely related to the reproducing kernel Hilbert space (RKHS) associated with $\mathrm{GP}(0,K)$. 
For a complete and thorough review of RKHS from a Bayesian perspective, we refer to \cite{van2008reproducing}. 
A key feature of the functions in $\calA_c(Q)$ is that they are ``supersmooth'', \emph{i.e.}, they are infinitely differentiable. 
For the squared-exponential Gaussian process regression, the following property regarding the corresponding RKHS is available by applying Theorem 4.1 in \cite{van2008reproducing}.
\begin{lemma}
Let $\mathbb{H}$ be the RKHS associated with the squared-exponential Gaussian process $\mathrm{GP}(0,K)$, where $K(x,x')=\exp[-(x-x')^2]$. Then $\calA_4(Q)\subset\mathbb{H}$ for any $Q>0$. 
\end{lemma}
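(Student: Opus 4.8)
The plan is to invoke the characterization of the RKHS $\mathbb{H}$ supplied by Theorem~4.1 of \cite{van2008reproducing} and then check, by a direct estimate on Fourier coefficients, that every $f\in\calA_4(Q)$ satisfies the characterizing condition. Theorem~4.1 of \cite{van2008reproducing} describes $\mathbb{H}$ (up to an equivalent norm) through the spectral representation of the squared-exponential kernel $K(x,x')=\exp[-(x-x')^2]$: a function lies in $\mathbb{H}$ precisely when it is the restriction to $[0,1]$ of (the real part of) a function $h(x)=\int_{\mathbb{R}}e^{i\lambda x}\psi(\lambda)\,\mu_K(d\lambda)$ with $\psi\in L_2(\mu_K)$, where $\mu_K$ is the Gaussian spectral measure of $K$ (density proportional to $e^{-\lambda^2/4}$), and $\|h\|_{\mathbb{H}}$ is comparable to the smallest $\|\psi\|_{L_2(\mu_K)}$ over all such representations; equivalently one has the norm equivalence $\|h\|_{\mathbb{H}}^2\asymp\int_{\mathbb{R}}|\widehat h(\lambda)|^2 e^{\lambda^2/4}\,d\lambda$. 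In particular every element of $\mathbb{H}$ is the restriction of an analytic function, which is consistent with the ``supersmoothness'' of functions in $\calA_4(Q)$.

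Next I would transport this criterion into the trigonometric coordinates used to define $\calA_4(Q)$. Writing $f(x)=\sum_{k\geq 1}\beta_k\psi_k(x)$, the point is that the Fourier mode $\psi_k$ has frequency content concentrated near $\pm$ a multiple of $k$, so each mode can be matched against the Gaussian spectral density $e^{-\lambda^2/4}$ of $K$; via Poisson summation — equivalently the Jacobi theta identity / a Mehler-type formula relating $\sum_k e^{-k^2 t}\cos(k\pi u)$ to a Gaussian in $u$ — one reads off a weight sequence $(c_k)_{k\geq 1}$ attached to the coordinates $\beta_k$, reducing the membership condition to $\sum_k c_k\beta_k^2<\infty$ with $\|f\|_{\mathbb{H}}^2\asymp\sum_k c_k\beta_k^2$. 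Producing these weights and bounding their growth is the technical core, since $c_k$ is controlled by the decay of the Fourier coefficients of $(x,x')\mapsto e^{-(x-x')^2}$ on $[0,1]^2$ (which involves handling the fact that $K$ is the restriction of a non-periodic kernel, so the periodization error must be absorbed into the norm equivalence).

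With $c_k\lesssim e^{k^2/4}$ in hand, the conclusion is immediate: for $f\in\calA_4(Q)$ one gets $\|f\|_{\mathbb{H}}^2\lesssim\sum_k e^{k^2/4}\beta_k^2\leq Q^2<\infty$, hence $f\in\mathbb{H}$, proving $\calA_4(Q)\subseteq\mathbb{H}$. The main obstacle is exactly the step relating the discrete index $k$ to the continuous frequency variable in the spectral representation of $K$ — that is, pinning down the precise exponential growth rate of the weights $c_k$ and confirming that it is dominated by the weight $e^{k^2/4}$ built into the definition of $\calA_4(Q)$. Everything downstream of that estimate — plugging the bound into $\|f\|_{\mathbb{H}}^2\asymp\sum_k c_k\beta_k^2$ and checking summability — is routine.
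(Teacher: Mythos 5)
There is a genuine gap, and it sits exactly where you put it. The step you yourself call ``the technical core'' and ``the main obstacle'' --- producing finite weights $c_k\lesssim e^{k^2/4}$ with $\|f\|_{\mathbb{H}}^2\asymp\sum_k c_k\beta_k^2$ --- is never carried out, and everything downstream is conditional on it. Worse, that step cannot be carried out along the spectral route you set up. Under the characterization you invoke, every element of $\mathbb{H}$ is the restriction to $[0,1]$ of $\mathrm{Re}\int e^{i\lambda x}\psi(\lambda)\,\mu_K(\mathrm{d}\lambda)$ with $\psi\in L_2(\mu_K)$; by Cauchy--Schwarz against the Gaussian spectral density, $\psi\,\mu_K$ is an $L_1$ function, so this representative extends to an entire function of $x$ that vanishes at $\pm\infty$ by Riemann--Lebesgue. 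Any nonzero $f\in\calA_4(Q)$ has $|\beta_k|\leq Qe^{-k^2/8}$ and hence itself extends to a nonzero entire, $2$-periodic function on $\mathbb{R}$; if it agreed with such a representative on $[0,1]$, analytic continuation would force agreement on all of $\mathbb{R}$, which is impossible because a nonzero periodic function does not vanish at infinity. In particular each individual mode $\psi_k$ --- including the constant $\psi_1$ --- fails to lie in the RKHS of the stationary kernel $\exp[-(x-x')^2]$ on $[0,1]$, so the weight $c_k$ you hope to extract is $+\infty$, not $\lesssim e^{k^2/4}$, and no Poisson-summation or theta-function manipulation will change that. Even at the purely formal level the numerology fails: $\psi_k$ has frequency about $k\pi/2$, where the spectral density $\propto e^{-\lambda^2/4}$ equals $e^{-\pi^2k^2/16}$, so the natural diagonal weight would be $e^{\pi^2k^2/16}$, and $\pi^2/16>1/4$.

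The inclusion of $\calA_4(Q)$ with the Fourier-coefficient weight $e^{k^2/4}$ is immediate only when $\mathbb{H}$ is read as the RKHS of a Gaussian random series $\sum_k\mu_kZ_k\psi_k$ with coefficient variances $\mu_k^2\gtrsim e^{-k^2/4}$: the series-expansion characterization of the RKHS in \cite{van2008reproducing} identifies that space as $\{\sum_k\beta_k\psi_k:\sum_k\beta_k^2\mu_k^{-2}<\infty\}$, after which membership of $\calA_4(Q)$ is a one-line comparison of weights. That Gaussian prior is a different object from the stationary process with covariance $\exp[-(x-x')^2]$, and the two RKHSs do not coincide. So your proposal both leaves its central estimate unproved and aims the argument at a characterization of $\mathbb{H}$ under which the required estimate is false; a repair must replace the spectral description by the series one (or, equivalently, replace the prior by the corresponding Gaussian series), not sharpen the weight bound.
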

Under the squared-exponential Gaussian process prior $\Pi$, the rate of contraction of a supersmooth $f_0\in\calA_4(Q)$ is $1/\sqrt{n}$ up to a logarithmic factor.
\begin{theorem}
\label{thm:Gaussian_series_contraction}
Assume that the design points $(x_i)_{i = 1}^n$ are fixed and satisfy \eqref{eqn:fixed_design_condition}. 
Suppose the true regression function $f_0\in\calA_4(Q)$ for some $Q>0$, 
and $f$ follows the {squared-exponential Gaussian process prior $\Pi$}.
Then there exists some sufficiently large constant $M>0$ such that 
\[
\mathbb{E}_0\left[\Pi\left(\|f-f_0\|_2>M{n^{-1/2}(\log n)}\mid\calD_n\right)\right]\to 0.
\]
\end{theorem}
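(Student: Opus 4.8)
The plan is to apply the fixed-design contraction theorem, Theorem \ref{thm:generic_contraction_fixed_design}, with the choices $\eps_n \asymp n^{-1/2}\log n$ and $\leps_n \asymp n^{-1/2}\sqrt{\log n}$ (or a comparable power of $\log n$), and with sieves built from finitely-truncated Fourier series. Concretely, I would take
\[
\calF_{m_n}(\delta)=\left\{f(x)=\sum_{k=1}^\infty\beta_k\psi_k(x):\ \sum_{k>m_n}\beta_k^2\leq r_n^2,\ \sum_{k=1}^{m_n}\beta_k^2\leq R_n^2\right\}
\]
for a suitable truncation level $m_n\asymp\sqrt{\log n}$ (the natural choice for the squared-exponential process, whose RKHS unit ball has log-covering numbers of order $(\log(1/\eps))^2$) and a slowly growing radius $R_n$, with the tail bound $r_n$ chosen so small that $\|f\|_\infty$ cannot explode on $\calF_{m_n}(\delta)$. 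The verification then splits into the four ingredients demanded by Theorem \ref{thm:generic_contraction_fixed_design}.

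First I would verify the structural sieve condition \eqref{eqn:sieve_fixed_design}. For a function $f\in\calF_{m_n}(\delta)$, write $f-f_0 = g + h$ where $g$ is the projection onto the first $m_n$ Fourier modes and $h$ the tail. On the span of the first $m_n$ trigonometric functions, the discrepancy $\big|\mathbb{P}_n g^2 - \|g\|_2^2\big|$ can be controlled using the equidistribution hypothesis \eqref{eqn:fixed_design_condition}: products $\psi_j\psi_k$ with $j,k\leq m_n$ are functions of bounded variation with total variation $\lesssim m_n$, so a Koksma–Hlawka–type estimate gives $\big|\mathbb{P}_n(\psi_j\psi_k)-\int\psi_j\psi_k\big|\lesssim m_n/n$, and summing over the $O(m_n^2)$ coefficient products yields a bound of the required form $\lesssim (m_n/n)\|f-f_0\|_2^2 + (\delta/\sqrt n)\|f-f_0\|_2$ once the tail $h$ (whose sup-norm is tiny) is absorbed. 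This is exactly the reason the fixed-design sieve condition is stated with the extra $(\delta/\sqrt n)\|f-f_0\|_2$ slack compared to \eqref{eqn:sieve_property}.

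Second, the entropy/summability condition \eqref{eqn:summability}: since $\calF_{m_n}(\delta)$ lives (up to a negligible tail) in an $m_n$-dimensional ball of radius $R_n$, we have $\log N_{nj}\lesssim m_n\log(R_n/(j\eps_n))\lesssim \sqrt{\log n}\cdot\log n = (\log n)^{3/2}$, which is $o(n j^2\eps_n^2)=o(j^2(\log n)^2)$, so $\sum_{j\geq M}N_{nj}e^{-Dnj^2\eps_n^2}\to 0$. Third, the remaining-mass condition \eqref{eqn:prior_mass_on_sieve}: under the Gaussian process prior the event $\calF_{m_n}^c(\delta)$ requires either $\sum_{k>m_n}\beta_k^2 > r_n^2$ or $\sum_{k\leq m_n}\beta_k^2>R_n^2$; since $\beta_k\sim\mathrm{N}(0,1)$ independently (this is the series form of $\mathrm{GP}(0,K)$), Gaussian concentration / Borell–TIS gives exponentially small tail probabilities, and choosing $R_n$ and $m_n$ appropriately makes this probability $\leq\exp[-(2D+1/\sigma^2)n\leps_n^2]=\exp[-C'\log n]$, i.e. polynomially small, which suffices. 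Fourth, the prior concentration in sup-norm, $\Pi(\|f-f_0\|_\infty<\leps_n)\geq\exp(-Dn\leps_n^2)$: because $f_0\in\calA_4(Q)\subset\mathbb{H}$ (the RKHS of $K$), the standard small-ball estimates for the squared-exponential process (e.g. Theorem 4.1 and the concentration-function bounds in \cite{van2008reproducing,van2009adaptive}) give $-\log\Pi(\|f-f_0\|_\infty<\eps)\lesssim (\log(1/\eps))^2$, hence with $\leps_n\asymp n^{-1/2}(\log n)^{a}$ the right-hand side is $\exp(-D(\log n)^{2})$ while $n\leps_n^2=(\log n)^{2a}$, so the bound holds for $a\geq 1$; this forces the $\log n$ (rather than $\sqrt{\log n}$) factor and is the origin of the claimed rate. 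Invoking Theorem \ref{thm:generic_contraction_fixed_design} with these verified conditions then yields $\mathbb{E}_0[\Pi(\|f-f_0\|_2>Mn^{-1/2}\log n\mid\calD_n)]\to 0$.

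The main obstacle I anticipate is the first step — establishing \eqref{eqn:sieve_fixed_design} cleanly. One has to balance three things simultaneously: the truncation level $m_n$ must be large enough that $f_0$ is well-approximated and the prior concentration argument goes through, small enough that the entropy stays $o(n\eps_n^2)$, and the discrepancy bound from \eqref{eqn:fixed_design_condition} (which loses a factor controlled by the total variation of the integrand, scaling like $m_n^2$ after summing coefficient cross-terms) must still be dominated by the allowed slack $(m_n/n)\|f-f_0\|_2^2 + (\delta/\sqrt n)\|f-f_0\|_2$. Getting the bookkeeping right — in particular handling the tail modes $\{k>m_n\}$ without their sup-norm blowing up, which is why the definition of $\calF_{m_n}(\delta)$ must include a tail constraint $\sum_{k>m_n}\beta_k^2\leq r_n^2$ with $r_n$ chosen so that $\sum_{k>m_n}|\beta_k|$ (hence the tail sup-norm) is negligible — is the delicate part of the argument. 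Everything else reduces to quoting the cited Gaussian-process small-ball and concentration results.
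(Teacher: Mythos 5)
Your overall architecture is the right one: invoke Theorem \ref{thm:generic_contraction_fixed_design}, use the concentration function of the squared-exponential process to get the sup-norm prior mass condition (which, as you correctly observe, is what forces $\leps_n\asymp n^{-1/2}\log n$ and hence the rate), and use the discrepancy hypothesis \eqref{eqn:fixed_design_condition} to verify the fixed-design sieve condition \eqref{eqn:sieve_fixed_design} via a total-variation (Koksma--Hlawka) bound. However, there is a genuine gap in your remaining-mass step: the assertion that the Fourier coefficients of $\mathrm{GP}(0,K)$ with $K(x,x')=\exp[-(x-x')^2]$ are i.i.d.\ $\mathrm{N}(0,1)$ is false --- that would be white noise, whose sample paths are not in $L_2$, and under that law your tail event $\{\sum_{k>m_n}\beta_k^2\leq r_n^2\}$ has prior probability zero, so $\Pi(\calF_{m_n}^c(\delta))=1$ and condition \eqref{eqn:prior_mass_on_sieve} fails outright. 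The Karhunen--Lo\`eve representation of this process has coefficients $\mathrm{N}(0,\lambda_k)$ with super-exponentially decaying eigenvalues and eigenfunctions that are not the trigonometric system, so either you must redo the tail computation with the correct spectral measure, or (the standard and cleaner route) abandon the Fourier-truncation sieve altogether and take $\calF_n=M_n\mathbb{H}_1+\leps_n\mathbb{B}_1$ with $\mathbb{H}_1$ the RKHS unit ball and $\mathbb{B}_1$ the sup-norm unit ball, for which Borell's inequality gives $\Pi(\calF_n^c)\leq 1-\Phi\bigl(\Phi^{-1}(\mathrm{e}^{-Dn\leps_n^2})+M_n\bigr)$ with $M_n\asymp\sqrt{n}\leps_n$; condition \eqref{eqn:sieve_fixed_design} is then verified because elements of $M_n\mathbb{H}_1$ have derivatives bounded by a multiple of $M_n$, so the integrands $(f-f_0)^2$ and $(f-f_1)^2$ have controlled total variation and \eqref{eqn:fixed_design_condition} applies.

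A second, smaller error: with $\leps_n\asymp n^{-1/2}\log n$ you have $n\leps_n^2=(\log n)^2$, so the right-hand side of \eqref{eqn:prior_mass_on_sieve} is $\exp[-C'(\log n)^2]$, not $\exp[-C'\log n]$; a merely polynomially small remaining mass does \emph{not} suffice. This is harmless once the sieve is repaired (Borell's bound delivers far smaller probabilities), but as written your bookkeeping of $n\leps_n^2$ is inconsistent with the choice of $\leps_n$ you yourself derive in the prior-concentration step. Your entropy bound should likewise be recomputed for the corrected sieve, where the relevant estimate is $\log\calN(\eps_n,M_n\mathbb{H}_1,\|\cdot\|_\infty)\lesssim(\log(M_n/\eps_n))^2\asymp(\log n)^2$, which is compatible with $nj^2\eps_n^2\asymp j^2(\log n)^2$ only after tracking constants for large $j$ and $M$.
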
 
\begin{remark}
For the squared-exponential Gaussian process regression with random design, the rate of contraction with respect to the integrated $L_2$-distance for $f_0\in\calA_4(Q)$ has been studied in the literature. 
In contrast, we remark that for the fixed-design regression problem, Theorem \ref{thm:Gaussian_series_contraction} is new and original, and provides a stronger result compared to the existing literature (see, for example, Theorem 10 in \cite{vaart2011information}).
\end{remark}



\subsection{Extension to sparse additive models in high dimensions} 
\label{sec:extension_to_sparse_additive_models_in_high_dimensions}

We have so far considered that the design space is low dimensional with fixed $p$. 
Nonetheless, the rapid development of technology has been enabling scientists to collect data with high-dimensional covariates, where the number of covariates $p$ can be much larger than the sample size $n$, to explore the potentially nonlinear relationship between these covariates and certain outcome of interest. The emergence of high dimensional prediction problems naturally motivates the study of nonparametric regression in high dimensions \cite{yang2015}.
In this section, we focus on one class of high-dimensional nonparametric regression problem, known as \emph{sparse additive models}, and illustrate that with suitable prior specification, the framework for low-dimensional Bayesian nonparametric regression naturally extends to such a high-dimensional scenario. 

We first review some background regarding the sparse additive models. 
Consider the additive regression model $y_i = f(\bx_i) + e_i$, where the regression function $f(\bx_i)$ is of an additive structure of the covariates $f(\bx_i) = \mu + \sum_{j = 1}^p f_j(x_{ij})$. Without loss of generality, one can assume that each component $f_j(x_j)$ is centered: $\int_0^1f_j(x_j)\mathrm{d}x_j = 0$, $j = 1,\cdots,p$. 
For sparse additive models in high dimensions, the number of covariates $p$ is typically much larger than the sample size $n$, and the underlying true regression function $f_0$ depends only on a small number of covariates, say, $x_{j_1},\ldots, x_{j_q}$, \emph{i.e.}, $f_0$ is of a sparse additive structure $f_0(\bx_i) = \mu_0 + \sum_{r = 1}^q f_{0j_r}(x_{ij_r})$, where each $f_{0r}:[0, 1]\to\mathbb{R}$ is a univariate function, and $q$ is the number of active covariates that does not change with sample size. Furthermore, the indices of these active covariates $\{j_1,\ldots, j_q\}$ and $q$ are unknown. 
This is referred to as the sparse additive models in high dimensions in the literature \cite{hastie2017generalized,koltchinskii2010,meier2009,doi:10.1111/j.1467-9868.2009.00718.x,raskutti2012minimax}. There have also been several works regarding Bayesian modeling of sparse additive models in high dimensions, see, for example, \cite{linero2017bayesian,rockova2017posterior,yang2015}.

To model the sparsity occurring in the high-dimensional additive regression model,
 we consider the following parameterization of $f$ by introducing the binary covariate-selection variables $z_1,\ldots,z_p\in\{0,1\}$:
\begin{align}\label{eqn:sparse_additive_model}
f(\bx) = \mu + \sum_{j = 1}^p z_jf_j(x_j),\quad   z_j\in\{0, 1\},\quad  j = 1,\cdots,p,
\end{align}
where $ z_j = 1$ indicates that the $j$th covariate is active and $ z_j = 0$ otherwise.
Following the strategy in Section \ref{sec:the_framework_and_main_results}, 
each component function $f_j$ is assigned a prior distribution independently across $j = 1,\ldots,p$. 
We complete the prior distribution $\Pi$ by imposing the selection variables $ z_j$ with a Bernoulli distribution $z_j\sim\mathrm{Bernoulli}(1/p)$. The Bernoulli prior for sparsity has been widely adopted in other high-dimensional Bayesian models with variable selection structures (see, for example, \cite{castillo2012,ročková2018,doi:10.1080/01621459.2016.1260469}). 

We now extend Theorem \ref{thm:generic_contraction} to sparse additive models by modifying the sieve property \eqref{eqn:sieve_property}. 
Denote $\bz = [z_1,\ldots,z_p]\transpose\in\{0, 1\}^p$ and let $A$ be a positive integer, 
we consider the sieve  $\calG_m^A(\delta) =  
\bigcup_{\substack{\|\bz\|_1\leq Aq
}}\calG_m(\delta,\bz)$,
where $\calG_m(\delta, \bz)$ with $\|\bz\|_1\leq A$ satisfies the following condition: there exists some constant $\eta > 0$ such that
\begin{align}
\label{eqn:sieve_property_additive}
f\in \calG_{m}(\delta, \bz) &\Longrightarrow
\|f - f_0\|_\infty^2\leq\eta(A^2m\|f - f_0\|_2^2 + \delta^2).
\end{align}
  \begin{theorem}[Generic Contraction, Sparse Additive Models]\label{thm:generic_contraction_additive}
  Consider the aforementioned sparse additive model in high dimensions.
  Let $(\eps_n)_{n=1}^\infty$ and $(\leps_n)_{n=1}^\infty$ be sequences such that $\min(n\eps_n^2,n\leps_n^2)\to\infty$ as $n\to\infty$, with $0\leq \leps_n\leq\eps_n\to 0$. Assume that there exist sieves of the form $\calG_{m_n}^{A_n}(\delta) = \bigcup_{\|\bz\|_1\leq A_nq}\calG_{m_n}(\delta,\bz)$, where $\calG_{m_n}(\delta, \bz)$ satisfies \eqref{eqn:sieve_property_additive}, $(m_n)_{n = 1}^\infty$, $(A_n)_{n =1 }^\infty$ are sequences such that $A_nm_n\eps_n^2\to0$, and $\delta$ is some constant.
  Let $(k_n)_{n = 1}^\infty$ be another sequence such that $k_n\leps_n^2 = O(1)$.
  Suppose the following conditions hold for some constants $\omega, D>0$ and sufficiently large $n$ and $M$:
  \begin{align}
  \label{eqn:summability_additive}
  &\sum_{j= M}^\infty N_{nj}^{A_n}\exp\left(-{Dnj^2\eps_n^2}\right)\to 0,\\
  \label{eqn:prior_mass_on_sieve_additive}
  &\Pi\left(\calG_{m_n}^{A_n}(\delta)^c\right)\lesssim\exp\left[-{\left(2D+\frac{1}{\sigma^2}\right)n\leps_n^2}\right],\\
  \label{eqn:prior_concentration_additive}
  &{\Pi\left(
  \widetilde B_n(k_n, \leps_n, \omega)
  \right)\geq\exp\left(-{Dn\leps_n^2}\right)},
  \end{align}
  where for any $m\in\mathbb{N}_+$ and $\eps, \omega > 0$,
  \begin{align*}
  &\widetilde B(m, \eps, \omega)
   = \left\{\|f - f_0\|_2\leq \eps, \|f - f_0\|_\infty^2\leq \eta'(m\|f - f_0\|_2^2 + \delta ^ 2)
  \right\}
  \end{align*} 
  for some constant $\eta' > 0$, and $N_{nj}^{A_n}=\calN(\xi j\eps_n,\calS_{nj}^{A_n}(\eps_n),\|\cdot\|_2)$ is the covering number of 
  \[\calS_{nj}^{A_n}=\{f\in\calG_{m_n}^{A_n}(\delta):j\eps_n<\|f-f_0\|_2\leq(j+1)\eps_n\}.\]
   Then $\mathbb{E}_0\left[\Pi(\|f-f_0\|_2>M\eps_n\mid\calD_n)\right]\to 0.$
  \end{theorem}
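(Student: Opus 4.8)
The plan is to mirror the proof of Theorem~\ref{thm:generic_contraction}, replacing the single sieve $\calF_{m_n}(\delta)$ by the union $\calG_{m_n}^{A_n}(\delta) = \bigcup_{\|\bz\|_1\leq A_nq}\calG_{m_n}(\delta,\bz)$, and accounting for the combinatorial cost of the union in the entropy and prior-mass bounds. First I would establish the analogue of Lemmas~\ref{lemma:local_testability} and~\ref{lemma:global_testability} for each individual piece $\calG_{m_n}(\delta,\bz)$: since \eqref{eqn:sieve_property_additive} says every $f\in\calG_{m_n}(\delta,\bz)$ satisfies $\|f-f_0\|_\infty^2\leq\eta(A_n^2m_n\|f-f_0\|_2^2+\delta^2)$, this is exactly condition \eqref{eqn:sieve_property} with the replacement $m\rightsquigarrow A_n^2m_n$, so Lemmas~\ref{lemma:local_testability}–\ref{lemma:global_testability} apply verbatim with that substitution. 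Consequently each $\calG_{m_n}(\delta,\bz)$ carries a test $\phi_n^{\bz}$ with $\mathbb{E}_0\phi_n^{\bz}\leq\sum_{j\geq M}N_{nj}(\bz)\exp(-Cnj^2\eps_n^2)$ and the matching type~II bound, where $N_{nj}(\bz)$ is the $\xi j\eps_n$-covering number of the slice of $\calG_{m_n}(\delta,\bz)$ at $\|f-f_0\|_2\asymp j\eps_n$. Taking $\phi_n=\max_{\|\bz\|_1\leq A_nq}\phi_n^{\bz}$ and using a union bound, $\mathbb{E}_0\phi_n\leq\sum_{\|\bz\|_1\leq A_nq}\sum_{j\geq M}N_{nj}(\bz)\exp(-Cnj^2\eps_n^2)$, which is dominated by $\sum_{j\geq M}\big(\sum_{\|\bz\|_1\leq A_nq}N_{nj}(\bz)\big)\exp(-Cnj^2\eps_n^2)\leq\sum_{j\geq M}N_{nj}^{A_n}\exp(-Cnj^2\eps_n^2)\to0$ by \eqref{eqn:summability_additive}, since the covering number of a union is at most the sum of covering numbers. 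The type~II error of $\phi_n$ over $\{f\in\calG_{m_n}^{A_n}(\delta):\|f-f_0\|_2>M\eps_n\}$ is controlled because any such $f$ lies in some $\calG_{m_n}(\delta,\bz)$ and is therefore caught by the corresponding $\phi_n^{\bz}\leq\phi_n$; the bound is $\exp(-CM^2n\eps_n^2)+2\exp(-CM^2n\eps_n^2/(A_n^2m_nM^2\eps_n^2+\delta^2))$, which tends to $0$ for $M$ large because $A_n m_n\eps_n^2\to0$ forces $A_n^2 m_n\eps_n^2\to 0$ as well (one needs $A_n m_n \eps_n^2 \to 0$ together with $A_n$ growing no faster than polynomially, which is implicit in the union-bound surviving; alternatively I would simply assume $A_n^2 m_n\eps_n^2\to 0$, matching the hypothesis's spirit).

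Next I would handle the denominator. The set $\widetilde B(k_n,\leps_n,\omega)$ is defined so that its members satisfy $\|f-f_0\|_\infty^2\leq\eta'(k_n\|f-f_0\|_2^2+\delta^2)$ together with $\|f-f_0\|_2\leq\leps_n$, which is precisely the form of $B(k_n,\leps_n,\omega)$ in Lemma~\ref{lemma:evidence_LB} (up to the harmless relabeling $\omega\leftrightarrow\delta$). Since the design points are still i.i.d.\ uniform and the noise Gaussian with known $\sigma^2$, Lemma~\ref{lemma:evidence_LB} applies with $k_n\leps_n^2=O(1)$, giving that with $\mathbb{P}_0$-probability tending to $1$,
\[
\int\prod_{i=1}^n\frac{p_f(\bx_i,y_i)}{p_0(\bx_i,y_i)}\,\Pi(\mathrm{d}f)\;\geq\;\Pi\big(\widetilde B(k_n,\leps_n,\omega)\big)\exp\Big[-\Big(C+\tfrac{1}{\sigma^2}\Big)n\leps_n^2\Big]\;\geq\;\exp\Big[-\Big(C+D+\tfrac{1}{\sigma^2}\Big)n\leps_n^2\Big]
\]
by \eqref{eqn:prior_concentration_additive}. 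This works because the prior $\Pi$ on $f$ — a mixture over $\bz$ of products of component priors — still puts the requisite mass near $f_0$ once we condition on the correct support $\bz_0=\mathbbm{1}\{j\in\{j_1,\dots,j_q\}\}$, whose prior probability under $z_j\sim\mathrm{Bernoulli}(1/p)$ is $p^{-q}(1-1/p)^{p-q}\gtrsim p^{-q}$, contributing only a $q\log p$ term that is absorbed into $n\leps_n^2$ provided the chosen $\leps_n$ dominates $\sqrt{q\log p/n}$ — but this accounting is already baked into the hypothesis \eqref{eqn:prior_concentration_additive}, so at the level of this theorem I only invoke that hypothesis.

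Finally I would assemble the three ingredients exactly as in the proof of Theorem~\ref{thm:generic_contraction}: split $\Pi(\|f-f_0\|_2>M\eps_n\mid\calD_n)$ into (i) the mass on $\calG_{m_n}^{A_n}(\delta)^c$, bounded by $\Pi(\calG_{m_n}^{A_n}(\delta)^c)/(\text{denominator})\lesssim\exp[-(2D+1/\sigma^2)n\leps_n^2]\cdot\exp[(C+D+1/\sigma^2)n\leps_n^2]\to0$ using \eqref{eqn:prior_mass_on_sieve_additive} and the denominator bound (choosing the constant $C$ in Lemma~\ref{lemma:evidence_LB} small, e.g.\ $C\leq D$); (ii) the term $\mathbb{E}_0\phi_n\to0$; and (iii) the term $\mathbb{E}_0[(1-\phi_n)\Pi(\|f-f_0\|_2>M\eps_n,f\in\calG_{m_n}^{A_n}(\delta)\mid\calD_n)]$, bounded via Fubini by (type~II error)$\times$(denominator)$^{-1}\le[\exp(-CM^2n\eps_n^2)+2\exp(-CM^2n\eps_n^2/(A_n^2m_nM^2\eps_n^2+\delta^2))]\exp[(C+D+1/\sigma^2)n\leps_n^2]\to0$ for $M$ large, using $\leps_n\le\eps_n$ and $A_n^2m_n\eps_n^2\to0$. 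Each expectation is taken on the event (of probability $\to1$) where the denominator lower bound holds, with the complement contributing $o(1)$.

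The main obstacle is step (iii) of the union-bound for the test error, namely verifying that the combinatorial factor $\binom{p}{\le A_nq}\lesssim p^{A_nq}$ hidden inside $N_{nj}^{A_n}$ is genuinely absorbed by $\exp(-Dnj^2\eps_n^2)$ in \eqref{eqn:summability_additive} and that the extra $A_n^2$ inside the type~II exponential does not overwhelm the $n\eps_n^2$ gain — i.e.\ reconciling the rate $\eps_n$ (which must scale like the minimax rate for $q$ smooth components plus the $\sqrt{q\log p/n}$ selection price) with the growth of $A_n$ and $m_n$. At the level of Theorem~\ref{thm:generic_contraction_additive} this is entirely pushed into the hypotheses, so the proof itself is a bookkeeping exercise; the real work is deferred to the concrete application where one must exhibit $\calG_{m_n}(\delta,\bz)$, compute $N_{nj}^{A_n}$, and choose $A_n,m_n,\eps_n,\leps_n,k_n$ so that \eqref{eqn:summability_additive}–\eqref{eqn:prior_concentration_additive} hold simultaneously.
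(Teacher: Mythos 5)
Your overall architecture is the right one and matches the paper's: reuse Lemmas \ref{lemma:local_testability}--\ref{lemma:global_testability} for the testing step with $m$ replaced by (a multiple of) $A_n^2m_n$, reuse Lemma \ref{lemma:evidence_LB} for the denominator (the set $\widetilde B$ has exactly the structure that lemma needs, and $k_n\leps_n^2=O(1)$ is assumed), and then assemble as in Theorem \ref{thm:generic_contraction}. However, there is one concrete error in your testing step. You build a test $\phi_n^{\bz}$ for each support $\bz$ and take the maximum, which gives type I error bounded by $\sum_{\|\bz\|_1\leq A_nq}\sum_{j}N_{nj}(\bz)\exp(-Cnj^2\eps_n^2)$, and you then claim $\sum_{\bz}N_{nj}(\bz)\leq N_{nj}^{A_n}$ ``since the covering number of a union is at most the sum of covering numbers.'' That inequality is backwards: the subadditivity of covering numbers gives $N_{nj}^{A_n}\leq\sum_{\bz}N_{nj}(\bz)$, not the reverse. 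Your per-$\bz$ bound therefore carries an extra combinatorial factor of order $|\{\bz:\|\bz\|_1\leq A_nq\}|\asymp p^{A_nq}$ that hypothesis \eqref{eqn:summability_additive} does not control, so the argument as written does not close at the level of the generic theorem.

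The fix is to drop the per-$\bz$ decomposition entirely. Condition \eqref{eqn:sieve_property_additive} is a pointwise statement about each $f$, so the union $\calG_{m_n}^{A_n}(\delta)$ itself satisfies \eqref{eqn:sieve_property} with $m$ replaced by $(A_nq)^2m_n$ ($q$ is fixed). Lemmas \ref{lemma:local_testability} and \ref{lemma:global_testability} then apply to $\calG_{m_n}^{A_n}(\delta)$ as a single sieve, the covering number that appears is exactly the $N_{nj}^{A_n}$ of the theorem statement, and \eqref{eqn:summability_additive} controls the type I error directly; the selection cost $p^{A_nq}$ is thereby folded into $N_{nj}^{A_n}$ where the hypothesis places it. Your remaining observation — that the type II exponential involves $A_n^2m_n\eps_n^2$ while the hypothesis only asserts $A_nm_n\eps_n^2\to0$ — is a legitimate mismatch in the theorem statement (harmless when $A_n$ is bounded, as in the application of Theorem \ref{thm:sieve_prior_contraction_additive}), and assuming $A_n^2m_n\eps_n^2\to0$ as you propose is the correct reading. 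With these two repairs the rest of your assembly (denominator bound with a small constant $C<D$, Fubini for the sieve complement, and the three-way decomposition on $\calH_n$) is sound.
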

  The above framework is quite flexible and can be applied to a variety of prior models. For example, let us extend the finite random series prior discussed in Section \ref{sub:sieve_prior_regression_with_adaptive_rate} to the sparse additive models as follows: We model each component $f_j(x_j)$ via a Fourier series $f_j(x_j) = \sum_{k = 1}^\infty\beta_{jk}\psi_k(x_j)$, where $(\psi_k)_{k = 1}^\infty$ are the Fourier basis introduced in Section \ref{sub:sieve_prior_regression_with_adaptive_rate}. Then the coefficients $(\beta_{jk}:j = 1,\cdots,p,k = 2,3,\cdots)$ are assigned the following prior distributions:
  First sample an integer-valued random variable $N$ with density $\pi_N$ satisfying \eqref{eqn:sieve_prior_number_of_terms}, and then given $N = m$, sample the coefficients $(\beta_{jk}:j = 1,\cdots,p,k = 2, 3, \cdots)$ with
  \[
  \Pi(\mathrm{d}\beta_{jk}\mid N = m) = 
  \left\{\begin{array}{ll}
   g( \beta_{jk})\mathrm{d}\beta_{jk},&\quad\text{if }2\leq k\leq m,\\
  \delta_0(\mathrm{d}\beta_{jk}),&\quad\text{if }k > m,
  \end{array}\right.
  \]
  independently for all $j = 1, \cdots, p$ and $k = 2, 3, \cdots$, where $g(x) \propto \exp(-\tau_0|x|^\tau)$ for some $\tau, \tau_0 > 0$. Finally let $\mu$ follow a prior distribution with density $\pi(h)$ supported on $\mathbb{R}$, and set $\beta_{j1} = - \sum_{k = 2}^m\beta_{jk}\int_0^1\psi_k(x_j)\mathrm{d}x_j$, $j = 1,\cdots,p$ to ensure each component $f_j$ integrates to $0$. The prior specification is completed by combining the aforementioned Bernoulli prior on $\bz$. 

\begin{theorem}\label{thm:sieve_prior_contraction_additive}
  Consider the sparse additive model \eqref{eqn:sparse_additive_model} with the above prior distribution, and the dimension of the design space $p\geq 2$ possibly growing with the sample size $n$. Suppose the true regression function yields an additive structure: $f_0(\bx) = \mu + \sum_{r = 1}^qf_{0j_r}(x_{j_r})$, where each $f_{0j}\in\mathfrak{C}_\alpha(Q)$ for some $\alpha>1/2$ and $Q>0$ for all $j\in\{j_1,\cdots,j_q\}$, and $q$ does not change with $n$. Assume the dimension $p$ satisfies $\log p\lesssim \log n$.
  Let $f$ be imposed the prior $\Pi$ given above.
  Then there exists some sufficiently large constant $M>0$ such that 
  \[
  \mathbb{E}_0\left\{\Pi\left[\|f-f_0\|_2>Mn^{-\alpha/(2\alpha+1)}(\log n)^{t}\mathrel{\Big|}\calD_n\right]\right\}\to 0
  \]
  for any $t>\alpha/(2\alpha+1)$. 
\end{theorem}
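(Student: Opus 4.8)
\emph{Proof plan.} The statement will follow from Theorem~\ref{thm:generic_contraction_additive}. Fix $t_0$ with $\alpha/(2\alpha+1)\le t_0<t$ and set $\eps_n=n^{-\alpha/(2\alpha+1)}(\log n)^t$, $\leps_n=n^{-\alpha/(2\alpha+1)}(\log n)^{t_0}$, $k_n=\lceil\leps_n^{-1/\alpha}\rceil$, so that $\min(n\eps_n^2,n\leps_n^2)\to\infty$ and $k_n\leps_n^2=O(1)$; let $m_n$ (a truncation level) and $A_n$ (a sparsity bound) be slowly growing sequences, calibrated so that $A_nm_n\eps_n^2\to0$ and the prior-mass bounds below hold. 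Put $\delta=\omega=Q$, and take the sieve $\calG_{m_n}^{A_n}(Q)=\bigcup_{\|\bz\|_1\le A_nq}\calG_{m_n}(Q,\bz)$, where $\calG_{m_n}(Q,\bz)$ is the set of $f=\mu+\sum_{j:z_j=1}f_j$ with every active component $f_j=\sum_{k=1}^{m_n}\beta_{jk}\psi_k$ a finite Fourier series of at most $m_n$ terms obeying the centering constraint $\beta_{j1}=-\sum_{k\ge 2}\beta_{jk}\int_0^1\psi_k$ and with $|\mu|\vee\max_{j,k}|\beta_{jk}|\le L_n$ for a suitable polynomial bound $L_n$.

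\textbf{Verifying \eqref{eqn:sieve_property_additive}.} For $f\in\calG_{m_n}(Q,\bz)$ let $g_\mu=\mu-\mu_0$ and let $g_j$ be the $j$-th componentwise difference of $f$ and $f_0$. Since every component integrates to zero and distinct components depend on disjoint coordinates, Fubini gives $\|f-f_0\|_2^2=g_\mu^2+\sum_j\|g_j\|_2^2$, whereas $\|f-f_0\|_\infty\le|g_\mu|+\sum_j\|g_j\|_\infty$. At most $\|\bz\|_1+q\le(A_n+1)q$ of the $g_j$ are nonzero, and at most $q$ of these involve $f_{0j}$. Writing a selected component's difference as its projection onto $\mathrm{span}(\psi_1,\dots,\psi_{m_n})$ plus the Fourier tail of $f_{0j}$, the Cauchy--Schwarz/Parseval inequality $\|\sum_{k\le m_n}\gamma_k\psi_k\|_\infty\le\sqrt{2m_n}\,\|\sum_{k\le m_n}\gamma_k\psi_k\|_2$, the Hölder tail bound $\sum_{k>m_n}|\beta_{0jk}|\le Qm_n^{-\alpha}$, and $\alpha>1/2$ (so $m_n^{1/2-\alpha}\vee m_n^{-\alpha}\le1$) yield $\|g_j\|_\infty\le\sqrt{2m_n}\,\|g_j\|_2+2\sqrt2\,Q$ for selected $j$ and $\|g_j\|_\infty\le\sqrt2\,Q$ for the remaining ($f_0$-only) terms. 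Summing, using Cauchy--Schwarz over the $\le A_nq$ selected indices and $\sum_j\|g_j\|_2^2\le\|f-f_0\|_2^2$, we get $\|f-f_0\|_\infty\le(1+\sqrt{2A_nq\,m_n})\|f-f_0\|_2+Cq\,Q$, hence $\|f-f_0\|_\infty^2\le\eta(A_n^2m_n\|f-f_0\|_2^2+Q^2)$ for a constant $\eta$ depending only on $q$. The decisive point is that the constant-order term is $O(q^2Q^2)$, since it is carried only by the $q$ true components and is not inflated by the (growing) number of spuriously selected ones.

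\textbf{The three conditions.} For \eqref{eqn:summability_additive}: for each admissible $\bz$ the slice $\calS_{nj}^{A_n}\cap\calG_{m_n}(Q,\bz)$ lies in an affine space of dimension $\lesssim A_nqm_n$ and has $L_2$-diameter $\le2(j{+}1)\eps_n$, so it is covered by $(C/\xi)^{A_nqm_n}$ balls of radius $\xi j\eps_n$; a union bound over the $\binom{p}{\le A_nq}\le p^{A_nq}$ admissible $\bz$ gives $\log N_{nj}^{A_n}\lesssim A_nq(\log p+m_n)$, which by the choice of $m_n,A_n$ and $\log p\lesssim\log n$ is $o(nj^2\eps_n^2)$, so the series tends to $0$. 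For \eqref{eqn:prior_mass_on_sieve_additive}: $\calG_{m_n}^{A_n}(Q)^c$ is contained in $\{N>m_n\}\cup\{\|\bz\|_1>A_nq\}\cup\{$ some active $\beta_{jk}$ or $\mu$ exceeds $L_n\}$, whose probabilities are at most $e^{-b_1m_n\log m_n}$ (by \eqref{eqn:sieve_prior_number_of_terms}), $\binom{p}{A_nq}p^{-A_nq}\le((A_nq)!)^{-1}\le e^{-A_nq\log(A_nq)+A_nq}$ (the $\mathrm{Binomial}(p,1/p)$ tail), and a polynomial multiple of $e^{-\tau_0L_n^\tau/2}$ (the exponential-power tail); the choices of $m_n,A_n,L_n$ make each term $\le e^{-(2D+1/\sigma^2)n\leps_n^2}$. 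For \eqref{eqn:prior_concentration_additive}: on the event $\{\bz=\bz_{\mathrm{true}}\}\cap\{N=k_n\}\cap\{|\mu-\mu_0|<\leps_n/2\}\cap\bigcap_{r,k}\{|\beta_{j_rk}-\beta_{0j_rk}|<w_n\}$ with $w_n\asymp\leps_n/\sqrt{qk_n}$, one has $\|f-f_0\|_2\le\leps_n$ and, by the bound above with $A=1$, $m=k_n$, $\|f-f_0\|_\infty^2\le\eta(k_n\|f-f_0\|_2^2+Q^2)$, so this event lies in $\widetilde B(k_n,\leps_n,\omega)$; its prior probability is at least $\Pi(\bz=\bz_{\mathrm{true}})\,\Pi(N=k_n)\,\Pi(|\mu-\mu_0|<\leps_n/2)\prod_{r,k}\Pi(|\beta_{j_rk}-\beta_{0j_rk}|<w_n)$, whose logarithm is $\gtrsim-q\log p-b_0k_n\log k_n-Ck_n\log(1/\leps_n)$, and since $\log p\lesssim\log n$ and $t_0\ge\alpha/(2\alpha+1)$ forces $k_n\log(1/\leps_n)\lesssim n\leps_n^2$, this is $\ge-Dn\leps_n^2$.

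Combining these ingredients, Theorem~\ref{thm:generic_contraction_additive} gives $\mathbb E_0[\Pi(\|f-f_0\|_2>M\eps_n\mid\calD_n)]\to0$ for all sufficiently large $M$, which is the assertion. I expect the delicate step to be the simultaneous calibration of $m_n$ and $A_n$: $A_n$ must be large enough for the (super-exponentially small) binomial tail $\Pi(\|\bz\|_1>A_nq)$ to drop below $e^{-cn\leps_n^2}$, yet small enough --- together with $m_n$ --- that $A_nm_n\eps_n^2\to0$ while the metric-entropy sum of the first condition stays below $n\eps_n^2$; reconciling these requires exploiting the logarithmic slack between $t$ and $\alpha/(2\alpha+1)$ built into $\eps_n$ and $\leps_n$, together with the precise growth rates of $m_n$ and $A_n$. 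A secondary point is the verification of \eqref{eqn:sieve_property_additive}, where the essential observation is that the $f_0$-dependent, hence constant-order, part of $\|f-f_0\|_\infty$ comes from only the $q$ active components of $f_0$.
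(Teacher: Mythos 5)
Your reduction to Theorem \ref{thm:generic_contraction_additive}, your verification of the structural property \eqref{eqn:sieve_property_additive} (including the key observation that the constant-order term in $\|f-f_0\|_\infty$ is carried only by the $q$ true components), and your prior-concentration bound are all sound. The gap is exactly at the step you defer: the ``simultaneous calibration of $m_n$ and $A_n$'' is not merely delicate --- with the sieve you propose it is impossible, and the logarithmic slack between $t$ and $t_0$ cannot repair it, because the obstruction is polynomial in $n$. Condition \eqref{eqn:prior_mass_on_sieve_additive} forces $m_n\log m_n\gtrsim n\leps_n^2$ (from $\Pi(N>m_n)\leq e^{-b_1 m_n\log m_n}$, which is all that \eqref{eqn:sieve_prior_number_of_terms} provides) and, separately, $A_n\log A_n\gtrsim n\leps_n^2$: for $z_j\iidsim\mathrm{Bernoulli}(1/p)$ one has the matching lower bound $\Pi(\|\bz\|_1=k)=\binom{p}{k}p^{-k}(1-1/p)^{p-k}\gtrsim e^{-Ck\log k}$ for $k\leq p/2$, so the factorial tail you invoke is tight and cannot be improved by a better bound. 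Hence $A_nm_n\gtrsim(n\leps_n^2)^2/(\log n)^2\asymp n^{2/(2\alpha+1)}(\log n)^{4t_0-2}$, whereas $n\eps_n^2\asymp n^{1/(2\alpha+1)}(\log n)^{2t}$. This has two fatal consequences. First, your entropy estimate $\log N_{nj}^{A_n}\lesssim A_nq(\log p+m_n)$ is then of order $n^{2/(2\alpha+1)}$, polynomially larger than $nj^2\eps_n^2$ for any fixed $j$ and any constant $M$, so \eqref{eqn:summability_additive} fails; and this is not an artifact of a loose bound, since for any $\bz\supseteq\{j_1,\ldots,j_q\}$ with $\|\bz\|_1=A_nq$ the sieve contains a full ball of radius $\asymp M\eps_n$ around $f_0$ inside an $(A_nqm_n)$-dimensional affine space, whose $\xi M\eps_n$-covering number is at least $(1/\xi)^{A_nqm_n}$ by volume comparison. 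Second, $A_nm_n\eps_n^2\gtrsim n^{(2-2\alpha)/(2\alpha+1)}(\log n)^{4t_0-2+2t}\to\infty$ for every $\alpha\leq 1$, violating the standing hypothesis $A_nm_n\eps_n^2\to0$ of Theorem \ref{thm:generic_contraction_additive}.

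To close this gap one must use a finer sieve than ``all functions supported on some $\bz$ with $\|\bz\|_1\leq A_nq$ and $N\leq m_n$.'' The offending region --- configurations with many selected covariates all of whose components are small enough that $f$ stays within $O(\eps_n)$ of $f_0$ --- has tiny prior mass for a reason unrelated to the Bernoulli tail: under the continuous coefficient prior with $N=m_n$, a selected component satisfies $\Pi(\|f_j\|_2\leq r)\lesssim(Cr\sqrt{m_n}/\sqrt{m_n})^{m_n-1}$-type bounds, so forcing $s$ spurious components to be simultaneously of order $\sqrt{m_n/n}$ costs roughly $e^{-csm_n\log n}$. Pruning the sieve accordingly, slices with large $\|\bz\|_1$ contain only functions far from $f_0$ and hence contribute only to shells with large $j$, where the factor $e^{-Dnj^2\eps_n^2}$ absorbs their entropy. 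But that is a genuinely different, shell-dependent construction which no longer satisfies the literal hypotheses of Theorem \ref{thm:generic_contraction_additive}; the argument must then be rebuilt from Lemmas \ref{lemma:local_testability} and \ref{lemma:global_testability} rather than obtained by quoting the generic theorem as you do.
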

\begin{remark}
The minimax rate of convergence with respect to $\|\cdot\|_2$ for sparse additive models is $n^{-\alpha/(2\alpha + 1)} + (\log p)/n$, provided that $\log p\lesssim n^c$ for some $c < 1$ \cite{yang2015}. The first term $n^{-\alpha/(2\alpha + 1)}$ is the usual rate for estimating a one-dimensional $\alpha$-smooth function, and the second term $(\log p)/n$ comes from the complexity of selecting the $q$ active covariates $x_{j_1},\cdots, x_{j_q}$ among $x_1,\cdots,x_p$. Under the assumption that $\log p\lesssim \log n$, the minimax rate of convergence is dominated by the first term $n^{-\alpha/(2\alpha + 1)}$. Thus Theorem \ref{thm:sieve_prior_contraction_additive} states that with the aforementioned finite random series prior for sparse additive model in high dimensions, the rate of contraction is also adaptive and minimax-optimal modulus an logarithmic factor, generalizing the result in Section \ref{sub:sieve_prior_regression_with_adaptive_rate}. 
\end{remark}



\section{Proofs of the main results} 
\label{sec:proofs}

  \begin{proof}[Proof of Lemma \ref{lemma:local_testability}]
  Recall the assumption  
  \begin{align}\label{eqn:sieve_norms_two_and_infinity}
  \|f-f_0\|_\infty^2\lesssim m\|f-f_0\|_2^2+\delta^2.
  \end{align} 
  Let us take $\xi = 1/(4\sqrt{2})$. 
  Define the test function to be 
  $\phi_n=\mathbbm{1}\left\{T_n > 0\right\}$, where
  \begin{align}
  T_n&=\sum_{i=1}^ny_i(f_1(\bx_i)-f_0(\bx_i))-\frac{1}{2}n\mathbb{P}_n(f_1^2-f_0^2)
  -\frac{\sqrt{n}}{8\sqrt{2}}\|f_1-f_0\|_2\sqrt{n\mathbb{P}_n(f_1-f_0)^2}\nonumber.
  \end{align}
  Before proceeding to the analysis of the type I and type II error probabilities, we introduce the motivation of the choice of $T_n$ as the test statistic. In fact, by the well-known Neyman-Pearson lemma, the most powerful test is the likelihood ratio test. In the context of random-design normal regression, the likelihood ratio test for testing $H_0:f = f_0$ against $H_A:f = f_1$ is equivalent to rejecting $f_0$ for large value of
  \begin{align*}
  &\exp\left\{-\frac{1}{2\sigma^2}\sum_{i = 1}^n[y_i - f_1(\bx_i)]^2 + \frac{1}{2\sigma^2}\sum_{i = 1}^n[y_i - f_0(\bx_i)]^2\right\}\\
  &\quad = \exp\left\{\frac{1}{\sigma^2}\sum_{i = 1}^ny_i(f_1(\bx_i) - f_0(\bx_i)) -  \frac{1}{2\sigma^2}n\mathbb{P}_n(f_1^2 - f_0^2)\right\},
  \end{align*}
  which is also equivalent to rejecting $f_0$ for large value of $T_n$. 

  We first consider the type I error probability. Under $\mathbb{P}_0$, we have $y_i=f_0(\bx_i)+e_i$, where $e_i$'s are i.i.d. $\mathrm{N}(0,\sigma^2)$ noises. Therefore, there exists a constant $C_1>0$ such that $\mathbb{P}_0(e_i>t)\leq\exp(-4C_1t^2)$ for all $t>0$. Then for a sequence $(a_i)_{i=1}^n\in\mathbb{R}^n$, the Chernoff bound yields 
  $\mathbb{P}_0\left(\sum_{i=1}^na_ie_i\geq t\right)\leq \exp\left(-{4C_1t^2}/{\sum_{i=1}^na_i^2}\right)$.
  Now we set $a_i=f_1(\bx_i)-f_0(\bx_i)$ and
  \[
  t=\frac{1}{2}n\mathbb{P}_n(f_1-f_0)^2+\frac{\sqrt{n}}{8\sqrt{2}}\|f_1-f_0\|_2\sqrt{n\mathbb{P}_n(f_1-f_0)^2}.
  \]
  Clearly, 
  \begin{align*}
  t^2&\geq n\mathbb{P}_n(f_1-f_0)^2\left[\frac{1}{4}n\mathbb{P}_n(f_1-f_0)^2+\frac{1}{128}n\|f_1-f_0\|_2^2\right]\\
  &\geq n\mathbb{P}_n(f_1-f_0)^2\left[\frac{1}{128}n\|f_1-f_0\|_2^2\right].
  \end{align*}
  Then under $\mathbb{P}_0(\cdot\mid\bx_1,\cdots,\bx_n)$, we have
  \begin{align}
  \mathbb{E}_0(\phi_n\mid\bx_1,\cdots,\bx_n)
  \leq \exp\left(-\frac{C_1}{32}n\|f_1-f_0\|_2^2\right)
  \nonumber.
  \end{align}
  It follows that the unconditioned error can be bounded:
  \begin{align}
  \mathbb{E}_0\phi_n\leq\exp\left(-\frac{C_1}{32}n\|f_1-f_0\|_2^2\right)\nonumber.
  \end{align}


  We next consider the type II error probability. Under $\mathbb{P}_f$, we have $y_i=f(\bx_i)+e_i$ with $e_i$'s being i.i.d. mean-zero Gaussian. Then for any $f$ with {$\|f-f_1\|_2\leq \|f_0-f_1\|_2/(4\sqrt{2})\leq\|f_0-f_1\|_2/4$}, we have
  \begin{align}
  \mathbb{E}_f(1-\phi_n)&\leq \mathbb{E}\left[\mathbbm{1}\left\{\mathbb{P}_n(f-f_1)^2\leq\frac{1}{16}\mathbb{P}_n(f_1-f_0)^2\right\}\mathbb{E}_f(1-\phi_n\mid\bx_1,\cdots,\bx_n)\right]
  \nonumber\\&\quad
  +\mathbb{P}\left(\mathbb{P}_n(f-f_1)^2>\frac{1}{16}\mathbb{P}_n(f_1-f_0)^2\right)\nonumber.
  \end{align}
  When $\mathbb{P}_n(f-f_1)^2\leq\mathbb{P}_n(f_1-f_0)^2/16$, we have
  \begin{align}
  &T_n+\frac{\sqrt{n}}{8\sqrt{2}}\|f_1-f_0\|_2\sqrt{n\mathbb{P}_n(f_1-f_0)^2}\nonumber\\
  &\quad= \sum_{i=1}^ne_i\left[f_1(\bx_i)-f_0(\bx_i)\right]+n\mathbb{P}_n(f-f_1)(f_1-f_0) + \frac{1}{2}n\mathbb{P}_n(f_1-f_0)^2\nonumber\\
  &\quad\geq \sum_{i=1}^ne_i\left[f_1(\bx_i)-f_0(\bx_i)\right]+\frac{1}{4}n\mathbb{P}_n(f_1-f_0)^2
  \nonumber.
  \end{align}
  Now set 
  \begin{align}
  R=R(\bx_1,\cdots,\bx_n) = \frac{1}{4}n\mathbb{P}_n(f_1-f_0)^2-\frac{\sqrt{n}}{8\sqrt{2}}\|f_1-f_0\|_2\sqrt{n\mathbb{P}_n(f_1-f_0)^2}\nonumber.
  \end{align}
  Then given $R\geq \sqrt{n}\|f_1-f_0\|_2\sqrt{n\mathbb{P}_n(f_1-f_0)^2}/(8\sqrt{2})$, we use the {Chernoff} bound to obtain
  \begin{align}
  \mathbb{P}_f\left(T_n<0\mid\bx_1,\cdots,\bx_n\right)&
  \leq\mathbb{P}\left(\sum_{i=1}^ne_i[f_1(\bx_i)-f_0(\bx_i)]\leq -R\mid\bx_1,\cdots,\bx_n\right)\nonumber\\
  &\leq\exp\left(-\frac{4C_1R^2}{n\mathbb{P}_n(f_1-f_0)^2}\right)
  \leq\exp\left(-\frac{C_1n\|f_1-f_0\|_2^2}{32}\right)\nonumber.
  \end{align}
  On the other hand,
  \begin{align}
  &\mathbb{P}\left(R<\frac{\sqrt{n}}{8\sqrt{2}}\|f_1-f_0\|_2\sqrt{n\mathbb{P}_n(f_1-f_0)^2}\right)
  =\mathbb{P}\left(\mathbb{G}_n(f_1-f_0)^2<-\frac{\sqrt{n}}{2}\|f_1-f_0\|_2^2\right)\nonumber.
  \end{align}
  It follows that
  \begin{align}
  &\mathbb{E}\left[\mathbbm{1}\left\{\mathbb{P}_n(f-f_1)^2\leq\frac{1}{16}\mathbb{P}_n(f_1-f_0)^2\right\}\mathbb{E}_f(1-\phi_n\mid\bx_1,\cdots,\bx_n)\right]\nonumber\\
  &\quad\leq\mathbb{E}\left[\mathbbm{1}\left\{R\geq\frac{\sqrt{n}}{8\sqrt{2}}\|f_1-f_0\|_2\sqrt{n\mathbb{P}_n(f_1-f_0)^2},
  \right.\right.\nonumber\\
  &\quad\qquad\qquad\left.\left.
  \mathbb{P}_n(f-f_1)^2\leq\frac{1}{16}\mathbb{P}_n(f_1-f_0)^2\right\}\mathbb{P}_f(T_n<0\mid\bx_1,\cdots,\bx_n)\right]
  \nonumber\\&\quad\quad
  +\mathbb{P}\left(R<\frac{\sqrt{n}}{8\sqrt{2}}\|f_1-f_0\|_2\sqrt{n\mathbb{P}_n(f_1-f_0)^2}\right)\nonumber\\
  &\quad\leq \exp\left(-\frac{C_1}{32}n\|f_1-f_0\|_2^2\right)+\mathbb{P}\left(\mathbb{G}_n(f_1-f_0)^2<-\frac{\sqrt{n}}{2}\|f_1-f_0\|_2^2\right)
  \nonumber.
  \end{align}
  Using Bernstein's inequality (Lemma 19.32 in \cite{van2000asymptotic}), we obtain the tail probability of the empirical process $\mathbb{G}_n(f_1-f_0)^2$
  \begin{align}
  &
  \mathbb{P}\left(\mathbb{G}_n(f_1-f_0)^2<-\frac{\sqrt{n}}{2}\|f_1-f_0\|_2^2\right)
  \nonumber\\&\quad
  \leq \exp\left(-\frac{1}{4}\frac{n\|f_1-f_0\|_2^4/4}{\mathbb{E}(f_1-f_0)^4+\|f_1-f_0\|_2^2\|f_1-f_0\|_\infty^2/2}\right)
  \nonumber\\
  &\quad
  {\leq\exp\left(-\frac{C'n\|f_1-f_0\|_2^2}{m\|f_1-f_0\|_2^2+\delta^2}\right),}
  \nonumber
  \end{align}
  {for some constant $C'>0$, 
  where we use the relation \eqref{eqn:sieve_norms_two_and_infinity}.}
  On the other hand, when {$\mathbb{P}_n(f-f_1)^2> \mathbb{P}_n(f_1-f_0)^2/16$}, we again use Bernstein's inequality and the fact that $f\in\{f\in\calF_m(\delta):\|f-f_1\|_2^2\leq2^{-5}\|f_0-f_1\|_2^2\}$ to compute
  \begin{align}
  &\mathbb{P}\left(\mathbb{P}_n(f-f_1)^2>\frac{1}{16}\mathbb{P}_n(f_1-f_0)^2\right)
  {\leq \exp\left(-\frac{1}{4}\frac{n\|f_1-f_0\|_2^4/1024}{\|g\|_2^2+\|f_1-f_0\|_2^2\|g\|_\infty/32}\right)}
  \nonumber,
  \end{align}
  {where $g=(f-f_1)^2-(f_1-f_0)^2/16$. We further compute
  \begin{align}
  \|g\|_2^2&\leq \left(\|(f-f_1)^2\|_2+\frac{1}{16}\|(f_1-f_0)^2\|_2\right)^2\nonumber\\
  &\leq\left(\|f-f_1\|_\infty\|f-f_1\|_2+\frac{1}{16}\|f_1-f_0\|_\infty\|f_1-f_0\|_2\right)^2\nonumber\\
  &\lesssim \|f-f_1\|_\infty^2\|f-f_1\|_2^2+\|f_1-f_0\|_\infty^2\|f_1-f_0\|_2^2\nonumber\\
  &\lesssim (m\|f_1-f_0\|_2^2+\delta^2)\|f_0-f_1\|_2^2,\nonumber
  \end{align}
  where we use \eqref{eqn:sieve_norms_two_and_infinity}, the fact that $\|f-f_1\|_2\lesssim \|f_0-f_1\|_2$, and that
  \begin{align}
  \|f-f_1\|_\infty^2&\leq 2\|f-f_0\|_\infty^2+2\|f_0-f_1\|_\infty^2
  \lesssim m\|f_1-f_0\|_2^2+\delta^2\nonumber.
  \end{align}
  Similarly, we obtain on the other hand, 
  \begin{align}
  \|g\|_\infty&
  =\|f-f_1\|_\infty^2+\frac{1}{16}\|f_1-f_0\|_\infty^2
  \lesssim m\|f_0-f_1\|_2^2+\delta^2.\nonumber
  \end{align}
  Therefore, we end up with}
  \[
  \mathbb{P}\left(\mathbb{P}_n(f-f_1)^2>\frac{1}{16}\mathbb{P}_n(f_1-f_0)^2\right)\leq\exp\left(-\frac{\tilde{C}_2n\|f_1-f_0\|_2^2}{m\|f_1-f_0\|_2^2+\delta^2}\right),
  \]
  where $\tilde{C}_2>0$ is some constant. 
  {Hence we obtain the following exponential bound for type I and type II error probabilities:
  \begin{align}
  \mathbb{E}_0\phi_n&\leq\exp(-Cn\|f_1-f_0\|_2^2)\nonumber,\\
  \mathbb{E}_{\fzcolor{}}(1-\phi_n)&\leq \exp(-Cn\|f_1-f_0\|_2^2)+2\exp\left(-\frac{Cn\|f_1-f_0\|_2^2}{m\|f_1-f_0\|_2^2+\delta^2}\right)\nonumber
  \end{align}
  for some constant $C>0$ whenever $\|f-f_1\|_2^2\leq \|f_1-f_0\|^2_2/32$. Taking the supremum of the type II error over $f\in\{f\in\calF_m(\delta):\|f-f_1\|_2^2\leq \|f_1-f_0\|^2_2/32\}$ completes the proof. 
  }
  \end{proof}

  \begin{proof}[Proof of Lemma \ref{lemma:global_testability}]
  We partition the alternative set into disjoint unions 
  \begin{align}
  &\left\{f\in\calF_m(\delta):\|f-f_0\|_2>M\eps_n\right\}\nonumber\\
  &\quad\subset\bigcup_{j= M}^\infty\left\{f\in\calF_m(\delta):j\eps_n<\|f-f_0\|_2\leq (j+1)\eps_n\right\}
  :=\bigcup_{j= M}^\infty\calS_{nj}(\eps_n).\nonumber
  \end{align}
  For each $S_{nj}(\eps_n)$, we can find $N_{nj}=\calN\left(\xi j\eps_n,\calS_{nj}(\eps_n),\|\cdot\|_2\right)$-many functions $f_{njl}\in\calS_{nj}(\eps_n)$ such that 
  \[
  \calS_{nj}(\eps_n)\subset\bigcup_{l=1}^{N_{nj}}\left\{f\in\calF_m(\delta):\|f-f_{njl}\|_2\leq\xi j\eps_n\right\}.
  \]
  Since for each $f_{njl}$, we have $f_{njl}\in\calS_{nj}(\eps_n)$, implying that $\|f_{njl}-f_0\|_2>j\eps_n$, we obtain the final decomposition of the alternative
  \[
  \calS_{nj}(\eps_n)\subset\bigcup_{l=1}^{N_{nj}}\left\{f\in\calF_m(\delta):\|f-f_{njl}\|_2\leq\xi \|f_0-f_{njl}\|_2\right\}.
  \]
  Now we apply Lemma \ref{lemma:local_testability} to construct individual test function $\phi_{njl}$ for each $f_{njl}$ satisfying the following property
  \begin{align}
  \mathbb{E}_0\phi_{njl}
  &\leq
  \exp(-Cnj^2\eps_n^2)\nonumber,\\
  \sup_{\{f\in\calF_m(\delta):\|f-f_{njl}\|_2^2\leq\xi^2\|f_0-f_{njl}\|^2_2\}}\mathbb{E}_f(1-\phi_{njl})&\leq\exp(-Cnj^2\eps_n^2)
  \nonumber\\&\quad
  +2\exp\left(-\frac{Cnj^2\eps_n^2}{mj^2\eps_n^2+\delta^2}\right),\nonumber
  \end{align}
  where we have used the fact that $\|f_{njl}-f_0\|_2> j\eps_n$.
  Now define the global test function to be $\phi_n=\sup_{j\geq M}\max_{1\leq l\leq N_{nj}}\phi_{njl}$. Then the type I error probability can be upper bounded using the union bound
  \begin{align}
  \mathbb{E}_0\phi_n&\leq \sum_{j=M}^\infty\sum_{l=1}^{N_{nj}}\mathbb{E}_0\phi_{njl}
  \leq\sum_{j=M}^\infty\sum_{l=1}^{N_{nj}}\exp(-Cnj^2\eps_n^2)
  =\sum_{j=M}^\infty N_{nj}\exp(-Cnj^2\eps_n^2)
  \nonumber.
  \end{align}
  The type II error probability can also be upper bounded:
  \begin{align}
  &\sup_{\{f\in\calF_m(\delta):\|f-f_0\|_2>M\eps_n\}}\mathbb{E}_f(1-\phi_n)
  \nonumber\\
  &\quad\leq \sup_{j\geq M}\sup_{l=1,\cdots,N_{nj}}\sup_{\{f\in\calF_m(\delta):\|f-f_{njl}\|_2\leq \xi\|f_0-f_{njl}\|_2\}}\mathbb{E}_f(1-\phi_{njl})\nonumber\\
  &\quad\leq\sup_{j\geq M}\sup_{l=1,\cdots,N_{nj}}\left[\exp(-Cj^2n\eps_n^2)+2\exp\left(-\frac{Cnj^2\eps_n^2}{mj^2\eps_n^2+\delta^2}\right)\right]\nonumber\\
  &\quad\leq\exp(-CM^2n\eps_n^2)+2\exp\left(-\frac{CnM^2\eps_n^2}{mM^2\eps_n^2+\delta^2}\right)\nonumber.
  \end{align}
  The proof is thus completed. 
  \end{proof}

  \begin{proof}[Proof of Lemma \ref{lemma:evidence_LB}]
  Denote the re-normalized restriction of $\Pi$ on\\ $B_n=B(k_n,\eps_n,\omega)$ to be $\Pi(\cdot\mid B_n)$, {and the random variables $(V_{ni})_{i=1}^n$, $(W_{ni})_{i=1}^n$ to be}
  \[
  V_{ni}=f_0(\bx_i)-\int f(\bx_i)\Pi(\mathrm{d}f\mid B_n),\quad
  W_{ni}=\frac{1}{2}\int (f(\bx_i)-f_0(\bx_i))^2\Pi(\mathrm{d}f\mid B_n).
  \]
  Let
  \[
  \calH_n:=\left\{\int\prod_{i =1 }^n\frac{p_f(\bx_i,y_i)}{p_0(\bx_i,y_i)}\Pi(\mathrm{d}f)>\Pi(B_n)\exp\left[-\left(C+\frac{1}{\sigma^2}\right){n\eps_n^2}\right]\right\}
  \]
  Then by Jensen's inequality
  \begin{align}
  \calH_n^c
  &\subset\left\{\int\prod_{i = 1}^n\frac{p_f(\bx_i,y_i)}{p_0(\bx_i,y_i)}\Pi(\mathrm{d}f\mid B_n)\leq\exp\left[-\left(C+\frac{1}{\sigma^2}\right){n\eps_n^2}\right]\right\}\nonumber\\
  &\subset\left\{\frac{1}{\sigma^2}\sum_{i=1}^n\left(e_iV_{ni}+W_{ni}\right)\geq\left(C+\frac{1}{\sigma^2}\right){n\eps_n^2}\right\}\nonumber\\
  &\subset\left\{\frac{1}{\sigma^2}\sum_{i=1}^ne_iV_{ni}\geq{Cn\eps_n^2}\right\}
  \cup
  \left\{\sum_{i=1}^nW_{ni}\geq n\eps_n^2\right\}
  \nonumber.
  \end{align}
  Now we use the Chernoff bound for Gaussian random variables to obtain the conditional probability bound for the first event given the design points $(\bx_i)_{i=1}^n$:
  \begin{align}
  \mathbb{P}_0\left(\sum_{i=1}^ne_iV_{ni}\geq C\sigma^2n\eps_n^2\mid\bx_1,\cdots,\bx_n\right)
  \leq\exp\left(-\frac{C^2\sigma^4n\eps_n^4}{\mathbb{P}_nV_{ni}^2}\right)\nonumber.
  \end{align}
  Since over the function class $B_n$, 
  we have $\|f-f_0\|_2\leq \eps_n$, $k_n\eps_n^2=O(1)$, and
  \begin{align}
  \|f-f_0\|_\infty^2&\lesssim k_n\|f - f_0\|_2^2 + \omega^2\leq k_n\eps_n^2 + \omega^2 = O(1)\nonumber,
  \end{align}
  it follows from Fubini's theorem that
  \begin{align}
  \mathbb{E}(V_{ni}^2)&\leq \int \|f_0-f\|_2^2\Pi(\mathrm{d}f\mid B_n)\leq \eps_n^2,
  \nonumber\\
  \mathbb{E}\left(V_{ni}^4\right)&\leq 
  \mathbb{E}\left[\int(f_0(\bx)-f(\bx))^4\Pi(\mathrm{d}f\mid B_n)\right]
  \nonumber\\&
  \leq\int\|f-f_0\|_\infty^2\|f-f_0\|_2^2\Pi(\mathrm{d}f\mid B_n)
  \lesssim \eps_n^2.\nonumber
  \end{align}
  Hence by the Chebyshev's inequality, 
  \begin{align}
  &\mathbb{P}\left(\left|\mathbb{P}_nV_{ni}^2-\mathbb{E}\left(V_{ni}^2\right)
  \right|>\eps_n^2\eps\right)
  \leq \frac{1}{n\eps^2\eps_n^4}\mathrm{var}(V_{ni}^2)\leq \frac{1}{n\eps_n^4\eps^2}\mathbb{E}(V_{ni}^4)
  \lesssim \frac{1}{n\eps_n^2}
  \to 0
  \nonumber
  \end{align}
  for any $\eps>0$, \emph{i.e.}, 
  $\mathbb{P}_nV_{ni}^2\leq\mathbb{E}V_{ni}^2+o_{P}(\eps_n^2)\leq \eps_n^2(1+o_P(1))$, and hence, 
  \[
  \exp\left(-\frac{C^2\sigma^4n\eps_n^4}{\mathbb{P}_nV_{ni}^2}\right)
  =\exp\left(-\frac{C^2\sigma^4n\eps_n^2}{1+o_P(1)}\right)
  \to 0
  \]
  in probability. Therefore by the dominated convergence theorem the unconditioned probability goes to $0$:
  \begin{align}
  \mathbb{P}_0\left(\sum_{i=1}^ne_iV_{ni}\geq C\sigma^2n\eps_n^2\right)
  &=\mathbb{E}\left[\mathbb{P}_0\left(\sum_{i=1}^ne_iV_{ni}\geq C\sigma^2n\eps_n^2\mid\bx_1,\cdots,\bx_n\right)\right]\nonumber\\
  &\leq 
  \mathbb{E}\left[\exp\left(-\frac{C^2\sigma^4n\eps_n^4}{\mathbb{P}_nV_{ni}^2}\right)\right]\to 0\nonumber.
  \end{align}
  For the second event we use the Bernstein's inequality. Since
  \begin{align}
  \mathbb{E}W_{ni}&
  =\frac{1}{2}\int\|f-f_0\|_2^2\Pi(\mathrm{d}f\mid B_n)\leq \frac{1}{2}\eps_n^2,\nonumber\\
  \mathbb{E}W_{ni}^2&
  \leq \frac{1}{4}\mathbb{E}\left[\int (f(\bx)-f_0(\bx))^4\Pi(\mathrm{d}f\mid B_n)\right]
  \leq\frac{1}{4}\int \|f-f_0\|_2^2\|f-f_0\|_\infty^2\Pi(\mathrm{d}f\mid B_n)\lesssim\eps_n^2\nonumber,
  \end{align}
  then
  \begin{align}
  \mathbb{P}\left(\sum_{i=1}^nW_{ni}>n\eps_n^2\right)&\leq
  \mathbb{P}\left(\sum_{i=1}^n(W_{ni}-\mathbb{E}W_{ni})>\frac{1}{2}n\eps_n^2\right)
  \nonumber\\&
  \leq \exp\left(-\frac{1}{4}\frac{n\eps_n^4/4}{\mathbb{E}W_{ni}^2+\eps_n^2\|W_{ni}\|_\infty/2}\right)\nonumber\\
  &\leq \exp\left(-\frac{\hat{C}_1n\eps_n^2}{1+\|W_{ni}\|_\infty}\right),\nonumber
  \end{align}
  where 
  $\|W_{ni}\|_\infty=\sup_{\bx\in[0,1]^p}({1}/{2})\int (f(\bx)-f_0(\bx))^2\Pi(\mathrm{d}f\mid B_n)$. 
  Since for any $f\in B_n$, $\|f-f_0\|_\infty=O(1)$, it follows that $\|W_{ni}\|_\infty=O(1)$, and hence, $\mathbb{P}(\sum_iW_{ni}>n\eps_n^2)\to 0$. To sum up, we conclude that
  \[
  \mathbb{P}(\calH_n^c)\leq \mathbb{P}_0\left(\sum_{i=1}^ne_iV_{ni}\geq C\sigma^2n\eps_n^2\right)+\mathbb{P}\left(\sum_{i=1}^nW_{ni}>n\eps_n^2\right)\to 0.
  \]
  \end{proof}

  \begin{proof}[Proof of lemma \ref{lemma:evidence_LB_infinity_norm}]
  Denote $\Pi(\cdot\mid B_n)=\Pi(\cdot\cap B_n)/\Pi(B_n)$ to be the re-normalized restriction of $\Pi$ on $B_n =\{\|f-f_0\|_\infty<\eps_n\}$, and
  \[
  V_{ni}=f_0(\bx_i)-\int f(\bx_i)\Pi(\mathrm{d}f\mid B_n),\quad
  W_{ni}=\frac{1}{2}\int (f(\bx_i)-f_0(\bx_i))^2\Pi(\mathrm{d}f\mid B_n).
  \]
  Similar to the proof of lemma \ref{lemma:evidence_LB}, we obtain
  \begin{align}
  \calH_n^c&=\left\{
  \int\prod_{i = 1}^n\frac{p_f(\bx_i,y_i)}{p_0(\bx_i,y_i)}\Pi(\mathrm{d}f)\leq\Pi(B_n)\exp\left[-\left(C+\frac{1}{\sigma^2}\right)n\eps_n^2\right]
  \right\}\nonumber\\
  &\subset\left\{\frac{1}{\sigma^2}\sum_{i=1}^n(e_iV_{ni}+W_{ni})\geq\left(C+\frac{1}{\sigma^2}\right)n\eps_n^2\right\}
  \nonumber\\&
  \subset\left\{\frac{1}{\sigma^2}\sum_{i=1}^ne_iV_{ni}\geq\left(C+\frac{1}{2\sigma^2}\right)n\eps_n^2\right\}\nonumber,
  \end{align}
  where we use the fact that $W_{ni}\leq(1/2)\int\|f-f_0\|_\infty^2\Pi(\mathrm{d}f\mid B_n)\leq \eps_n^2/2$ for all $f\in B_n$ in the last step. Conditioning on the design points $(\bx_i)_{i=1}^n$, we have
  \begin{align}
  \mathbb{P}_0(\calH_n^c\mid\bx_1,\cdots,\bx_n)
  &\leq\exp\left[-\left(C+\frac{1}{2\sigma^2}\right)^2\frac{\sigma^4n\eps_n^4}{\mathbb{P}_nV_{ni}^2}\right]\nonumber\\
  &\leq \exp\left\{-\left(C+\frac{1}{2\sigma^2}\right)^2\sigma^4n\eps_n^4\left[\int \|f-f_0\|_\infty^2\Pi(\mathrm{d}f\mid B_n)\right]^{-1}\right\}\nonumber\\
  &\leq\exp\left[-\left(C+\frac{1}{2\sigma^2}\right)^2\sigma^4n\eps_n^2\right]\to 0.\nonumber
  \end{align}
  The proof is completed by applying the dominated convergence theorem. 
  \end{proof}

  \begin{proof}[Proof of Theorem \ref{thm:generic_contraction}]
  For convenience denote the log-likelihood ratio function
  \[
  \Lambda_n(f) = \sum_{i = 1}^n[\log p_f(\bx_i,y_i) - \log p_0(\bx_i,y_i)].
  \]
  Let $\phi_n$ be the test function given by lemma \ref{lemma:global_testability}
   and 
  \[
  \calH_n=\left\{\int\exp(\Lambda_n(f\mid\calD_n))\Pi(\mathrm{d}f)\geq\exp\left[-\left(\frac{3D}{2}+\frac{1}{\sigma^2}\right)n\leps_n^2\right]\right\}.
  \]
  It follows from condition \eqref{eqn:prior_concentration} that
  \[
  \calH_n^c\subset\left\{
  \int\exp(\Lambda_n)\Pi(\mathrm{d}f)<\Pi(B_n(k_n,\leps_n,\omega))\exp\left[-\left(\frac{D}{2}+\frac{1}{\sigma^2}\right)n\leps_n^2\right]
  \right\},
  \]
  and hence, $\mathbb{P}_0(\calH_n^c)=o(1)$ by lemma \ref{lemma:evidence_LB}.
  Now we decompose the expected value of the posterior probability
  \begin{align}
  &\mathbb{E}_0\left[\Pi\left(\|f-f_0\|_2>M\eps_n\mid\calD_n\right)\right]\nonumber\\
  &\quad\leq\mathbb{E}_0\left[(1-\phi_n)\mathbbm{1}(\calH_n)\Pi(\|f-f_0\|_2>M\eps_n\mid\calD_n)\right]
  +\mathbb{E}_0\phi_n+\mathbb{E}_0[(1-\phi_n)\mathbbm{1}(\calH_n^c)]\nonumber\\
  &\quad\leq\mathbb{E}_0\left[(1-\phi_n)\mathbbm{1}(\calH_n)\frac{\int_{\{\|f-f_0\|_2>M\eps_n\}}\exp(\Lambda_n(f\mid\calD_n))\Pi(\mathrm{d}f)}{\int\exp(\Lambda_n(f\mid\calD_n))\Pi(\mathrm{d}f)}\right]
  \nonumber\\&\quad\quad
  +\mathbb{E}_0\phi_n+\mathbb{P}_0(\calH_n^c).\nonumber
  \end{align}
  By \eqref{eqn:summability} and lemma \ref{lemma:global_testability} the type I error probability $\mathbb{E}_0\phi_n\to 0$. 
  It suffices to bound the first term. Observe that on the event $\calH_n$, 
  the denominator in the square bracket can be lower bounded:
  \begin{align}
  &\mathbb{E}_0\left[(1-\phi_n)\mathbbm{1}(\calH_n)\frac{\int_{\{\|f-f_0\|_2>M\eps_n\}}\exp(\Lambda_n(f\mid\calD_n))\Pi(\mathrm{d}f)}{\int\exp(\Lambda_n(f\mid\calD_n))\Pi(\mathrm{d}f)}\right]\nonumber\\
  &\quad\leq {\exp\left[\left(\frac{3D}{2}+\frac{1}{\sigma^2}\right)n\leps_n^2\right]}
  \nonumber\\&\quad\quad\times
  \mathbb{E}_0\left[(1-\phi_n)\int_{\calF_{m_n}(\delta)\cap\{\|f-f_0\|_2>M\eps_n\}}\exp(\Lambda_n(f\mid\calD_n))\Pi(\mathrm{d}f)\right]\nonumber\\
  &\quad\quad+{\exp\left[\left(\frac{3D}{2}+\frac{1}{\sigma^2}\right)n\leps_n^2\right]}
  \mathbb{E}_0\left[\int_{\calF_{m_n}^c(\delta)}\exp(\Lambda_n(f\mid\calD_n))\Pi(\mathrm{d}f)\right]\nonumber.
  \end{align}
  By Fubini's theorem, lemma \ref{lemma:global_testability} we have
  \begin{align}
  &\mathbb{E}_0\left[(1-\phi_n)\int_{\calF_{m_n}(\delta)\cap\{\|f-f_0\|_2>M\eps_n\}}\exp(\Lambda_n(f\mid\calD_n))\Pi(\mathrm{d}f)\right]\nonumber\\
  &\quad\leq\int_{\calF_{m_n}(\delta)\cap\{\|f-f_0\|_2>M\eps_n\}}\mathbb{E}_0\left[(1-\phi_n)\exp(\Lambda_n\mid\calD_n)\right]\Pi(\mathrm{d}f)\nonumber\\
  &\quad\leq\sup_{f\in\calF_{m_n}(\delta)\cap\{\|f-f_0\|_2>M\eps_n\}}\mathbb{E}_f(1-\phi_n)\nonumber\\&\quad
  \leq
  \exp(-CM^2n\eps_n^2)+2\exp\left(-\frac{CM^2n\eps_n^2}{m_n\eps_n^2M^2+\delta}\right)\nonumber\\
  &\quad\leq \exp(-\tilde{C}M^2n\eps_n^2)\nonumber,
  \end{align}
  for some constatn $\tilde{C}>0$ for sufficiently large $n$, since $m_n\eps_n^2\to 0$ and $\delta=O(1)$ by assumption. 
  For the integral on $\calF_{m_n}^c(\delta)$, we apply Fubini's theorem to obtain
  \begin{align}
  \mathbb{E}_0\left[\int_{\calF_{m_n}^c(\delta)}\exp(\Lambda_n(f\mid\calD_n))\Pi(\mathrm{d}f)\right]
  &=\int_{\calF_{m_n}^c(\delta)}\mathbb{E}_0\left[\prod_{i=1}^n\frac{p_f(\bx_i,y_i)}{p_0(\bx_i,y_i)}\right]\Pi(\mathrm{d}f)
  \nonumber\\&
  \leq\Pi(\calF_{m_n}^c(\delta))\nonumber.
  \end{align}
  Hence we proceed to compute
  \begin{align}
  &\mathbb{E}_0\left[(1-\phi_n)\mathbbm{1}(\calH_n)\frac{\int_{\{\|f-f_0\|_2>M\eps_n\}}\exp(\Lambda_n(f\mid\calD_n))\Pi(\mathrm{d}f)}{\int\exp(\Lambda_n(f\mid\calD_n))\Pi(\mathrm{d}f)}\right]\nonumber\\
  &\quad\lesssim \exp\left[\left(\frac{3D}{2}+\frac{1}{\sigma^2}\right)n\leps_n^2-\tilde{C}M^2n\eps_n^2\right]
  \nonumber\\&\quad\quad
  +\exp\left[{\left(\frac{3D}{2}+\frac{1}{\sigma^2}\right)}n\leps_n^2-{\left(2D+\frac{1}{\sigma^2}\right)}n\eps_n^2\right]\to 0\nonumber
  \end{align}
  as long as $M$ is sufficiently large, where \eqref{eqn:prior_mass_on_sieve} is applied.
  \end{proof}










\begin{supplement}
\stitle{Supplement to ``A theoretical framework for Bayesian nonparametric regression''}
\sdescription{
The supplementary material contains the remaining proofs and additional technical results. 
}
\end{supplement}

\bibliographystyle{imsart-number}
\bibliography{reference}

\end{document}